\title[Frobenius--stable GR vanishing fails for 3-folds]
{The Frobenius--stable version of the Grauert--Riemenschneider vanishing theorem fails} 
\author[J.~Baudin, F.~Bernasconi and T.~Kawakami]{Jefferson Baudin, Fabio Bernasconi and Tatsuro Kawakami} 
\subjclass[2020]{Primary: 14F17, 14E30, 14G17;
Secondary: 14B05, 14F30, 14J45.}
\keywords{Grauert--Riemenschneider vanishing theorem, Frobenius morphism, crystals, Fano varieties, positive characteristic.}
\address{\'Ecole Polytechnique F\'ed\'erale de Lausanne, SB MATH CAG, MA C3 615 (B\^atiment MA), Station 8, CH-1015 Lausanne, Switzerland.}
\email{jefferson.baudin@epfl.ch}
\address{Dipartimento di Matematica “Guido
Castelnuovo”, SAPIENZA Università di Roma, Piazzale Aldo Moro 5, I-00185 
Roma}
\email{fabio.bernasconi@uniroma1.it}
\address{Graduate School of Mathematical Sciences, University of Tokyo, 3-8-1 Komaba,
Meguro-ku, Tokyo 153-8914, Japan} 
\email{kawakami@ms.u-tokyo.ac.jp}
\DeclareMathOperator{\cent}{center}
\newcommand{\MO}{\mathcal{O}}
\newcommand{\sO}{\mathcal{O}}
\newcommand{\Q}{\mathbb{Q}}
\newcommand{\Z}{\mathbb{Z}}
\newcommand{\F}{\mathbb{F}}
\newcommand{\m}{\mathfrak{m}}
\begin{document}

\begin{abstract}
    We show that the Frobenius--stable version of the Grauert--Riemenschneider vanishing theorem fails for threefolds in any positive characteristic, and for terminal 3-folds in characteristic $p \in \{2, 3, 5\}$.
    To prove this, we introduce the notion of $\mathbb{F}_p$-rationality for singularities in positive characteristic and we show that klt singularities in dimension at most 4 are $\mathbb{F}_p$-rational.
    We apply this to prove a Frobenius--stable version of the Kawamata--Viehweg vanishing theorem on $K$-trivial 3-folds.
\end{abstract}

\maketitle

\tableofcontents

\section{Introduction}

The Grauert--Riemenschneider (GR) vanishing theorem states that, for a projective birational morphism $f \colon Y \to X$ of varieties defined over a field of characteristic 0 where $Y$ is regular, the higher direct images $R^if_*\omega_Y$ of the dualising sheaf vanish (\cite{GR70}, \cite{km-book}*{Corollary 2.68}).
It is one of the most celebrated consequences of the Kodaira vanishing theorem and it turned out to
be a useful tool to study singularities and deformations thereof in characteristic 0 (to name a
few, \cites{Elk78, EV85, Kov00}).

Let $p>0$ be a prime number. Over a field $k$ of characteristic $p$, GR vanishing always holds if $\dim(X) \leq 2$ by \cite{kk-singbook}*{Theorem 10.4} but it is well-known to fail starting from dimension at least 3. 
A typical counterexample is the cone over a smooth projective variety for which Kodaira vanishing fails (see \cite{HK15}*{Example 3.11}). 
Nevertheless, it is expected that some weakening of the GR vanishing theorem might still hold in characteristic $p$.
For example, a Witt vector version of GR vanishing stating that $R^if_*W\omega_{Y, \mathbb{Q}}$ vanishes has been conjectured in \cites{CR12, Tan22}.

In this article, we concentrate on the \emph{Frobenius--stable version of GR vanishing} and we show that such a statement can fail even for 3-fold singularities which naturally appear in the Minimal Model Program (MMP for short).

\begin{theorem}[\autoref{thm: counterex}] \label{thm: GR_fails_intro}
    For any $p > 0$, there exists a $\mathbb{Q}$-factorial affine 3-fold $X$ over $\mathbb{F}_p$ such that
    \[ R^1f_*\omega_Y \not\sim_C 0 \] for every resolution $f \colon Y \to X$.

    If furthermore $p \leq 5$, we can take $X$ to have only terminal singularities.
\end{theorem}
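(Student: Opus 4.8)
The plan is to obtain $X$ as the affine cone over a log del Pezzo surface exhibiting a sufficiently robust failure of Kodaira-type vanishing, and to use the $\mathbb{F}_p$-rationality of terminal singularities to reduce the statement to a single resolution-independent invariant. Recall that $R^1f_*\omega_Y \not\sim_C 0$ means that the coherent sheaf $R^1f_*\omega_Y$, equipped with the Cartier-module structure $C \colon F_*R^1f_*\omega_Y \to R^1f_*\omega_Y$ induced by the trace of Frobenius on $\omega_Y$, is \emph{not} nilpotent; equivalently the images of the iterates $C^e$ stabilize to a nonzero subsheaf (its $F$-stable part, i.e. its associated Cartier crystal is nonzero). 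Since $X$ is to be terminal, hence klt of dimension $3 \le 4$, it is $\mathbb{F}_p$-rational; consequently, for \emph{every} resolution $f \colon Y \to X$ the trace map $Rf_*\omega_Y[\dim X] \to \omega_X^\bullet$ has $F$-nilpotent cone, so the Cartier crystal of $R^1f_*\omega_Y$ equals that of $\mathcal{H}^{-2}(\omega_X^\bullet)$, a quantity intrinsic to $X$. Thus it suffices to exhibit one $\mathbb{Q}$-factorial terminal affine $3$-fold $X/\mathbb{F}_p$ for which $\mathcal{H}^{-2}(\omega_X^\bullet)$ is a non-nilpotent Cartier module.

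For the construction, fix a log del Pezzo surface $Z$ over $\mathbb{F}_p$ of Picard rank one with only cyclic quotient singularities, and an ample $\mathbb{Q}$-Cartier $\mathbb{Q}$-divisor $L$ spanning $N^1(Z)_{\mathbb{Q}}$; set $X = \Spec \bigoplus_{m \ge 0} H^0(Z, \lfloor mL \rfloor)$, the affine cone over $(Z,L)$. Since $N^1(Z)_{\mathbb{Q}}$, hence $\Cl(X)_{\mathbb{Q}}$, is spanned by $[L]$ alone, $X$ is $\mathbb{Q}$-factorial. Writing $-K_Z \sim_{\mathbb{Q}} rL$, the divisorial exceptional component over the vertex of the blow-up of $X$ has discrepancy $r-1$, positive once $L$ is sub-anticanonical, while along the ray through each singular point of $Z$ the germ of $X$ is a $3$-fold cyclic quotient singularity whose surface slice is the corresponding quotient singularity of $Z$ and whose remaining weight is governed by the local class of $L$; choosing $L$ so that all of these germs become terminal cyclic quotient singularities $\tfrac{1}{s}(1,a,s-a)$ (Reid--Tai) makes $X$ terminal. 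On the cohomological side, by local duality on the cone together with Serre duality on $Z$, the Cartier module $\mathcal{H}^{-2}(\omega_X^\bullet)$ is identified with $\bigoplus_{j > 0} H^1\!\big(Z, \omega_Z(\lceil jL \rceil)\big)$ — or, more precisely, with the analogous cohomology of the klt pair $(Z,\Delta)$, the boundary $\Delta$ accounting for fractional parts — together with its Cartier operator, the grading by $j$ being divided by $p$ under $C$.

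It remains to choose $Z$, and $p \in \{2,3,5\}$, so that this Cartier module is non-nilpotent: concretely, so that for some $j_0 > 0$ the group $H^1(Z, \omega_Z(\lceil j_0 L\rceil))$, equivalently $H^1(Z, \mathcal{O}_Z(-\lceil j_0 L\rceil))^{\vee}$, is nonzero and the stable image in it of the Cartier operator coming from the twists $j = p^e j_0$ does not vanish — that is, $Z$ must exhibit a \emph{Frobenius-stable} failure of Kawamata--Viehweg vanishing. The non-vanishing itself is the known failure of Kawamata--Viehweg vanishing for log del Pezzo surfaces of Picard rank one in characteristic $p \in \{2,3,5\}$, the offending ample class being necessarily anticanonical in type; the substance lies in upgrading it to non-nilpotence. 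For that I would pass to Cartier crystals and invoke the Riemann--Hilbert-type equivalence with constructible $\mathbb{F}_p$-étale sheaves: the crystal attached to the relevant graded piece corresponds to an étale $\mathbb{F}_p$-cohomology group of a cyclic cover of $Z$ (equivalently, of the punctured cone over $(Z,L)$), and one must show this group is nonzero for the chosen $Z$, e.g. directly from an explicit presentation of $Z$ as a quotient or as a distinguished member of the pencil realizing the vanishing failure. The main obstacle is precisely this last point: producing, for each $p \in \{2,3,5\}$, a Picard-rank-one log del Pezzo surface over $\mathbb{F}_p$ whose failure of vanishing \emph{persists under iteration of the Cartier operator} rather than being annihilated after finitely many steps, while remaining compatible with a sub-anticanonical polarization and terminal cyclic quotient singularities — together with the bookkeeping needed to track the Cartier/crystalline structure through the resolution of the singularities of $Z$ and to rule out that the surviving class is cancelled by contributions from the other graded weights.
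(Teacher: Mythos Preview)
Your reduction step is correct and matches the paper: by $\mathbb{F}_p$-rationality of terminal $3$-folds (the paper's \autoref{prop: 3-fold_sing}) together with the duality of \autoref{thm: duality}, one has $R^1f_*\omega_Y \sim_C \mathcal{H}^{-2}(\omega_X^{\bullet})$ for \emph{every} resolution $f$, so the problem is intrinsic to $X$ and is equivalent (by local duality) to finding a terminal $3$-fold with $H^2_{\mathfrak m}(\cO_{X,x}) \not\sim_F 0$.

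The genuine gap is in your construction. Affine cones cannot give such an example, and the obstacle you flag at the end is not merely a technical difficulty but an actual obstruction. With $X = C_a(Z,L)$ one has the graded identification
\[
H^2_{\mathfrak m}(\cO_X) \;\cong\; \bigoplus_{m \in \bZ} H^1\!\big(Z,\cO_Z(\lfloor mL\rfloor)\big),
\]
and the Frobenius multiplies the grading by $p$. For $m>0$, Serre vanishing for the ample $L$ kills $H^1(Z,\cO_Z(\lfloor mL\rfloor))$ once $m\gg 0$; for $m<0$, Serre duality and Serre vanishing applied to $\omega_Z$ give the same conclusion for $m\ll 0$; and for $m=0$ one is looking at the Frobenius on $H^1(Z,\cO_Z)$, which is nilpotent for any surface of del Pezzo type by \autoref{prop: nilp_vanishing_dP}. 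Hence $F$ is nilpotent on each graded piece after finitely many iterates and on the whole module after uniformly many --- equivalently, in your dual formulation, there is no $j_0>0$ with $H^1(Z,\omega_Z(\lceil p^e j_0 L\rceil))\neq 0$ for all $e$, so the Cartier crystal of $\bigoplus_{j>0} H^1(Z,\omega_Z(\lceil jL\rceil))$ is zero. This is exactly what the paper records in \autoref{ex: Frobenius-stable-GR-cone} (stated there for regular $Z$; the argument above needs only projectivity and ampleness) and why the introduction explicitly says cone counterexamples to classical GR vanishing still satisfy the Frobenius--stable version.

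The paper's actual construction avoids cones entirely and uses Totaro's terminal $3$-folds arising as \emph{wild} $\bZ/p\bZ$-quotients of smooth $3$-folds, with the action free in codimension $2$. The key input replacing your missing ``Frobenius-stable failure of KV'' is \autoref{lem: loc_coh_quotient}, an enhancement of Fogarty's depth bound: a group cohomology computation shows directly that $1 \in H^1(G, R^{1/p^{\infty}})$ is a nonzero class, whence $H^2_{\mathfrak m}(\cO_X) \not\sim_F 0$. Combined with your (correct) reduction via $\mathbb{F}_p$-rationality and \autoref{lem: equiv_conj_GR_CM}, this yields $R^1f_*\omega_Y \not\sim_C 0$.
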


\begin{remark}
\begin{enumerate}
    \item The bound $p \leq 5$ in the second part of the statement is optimal, because klt $3$-fold singularities always satisfy the usual Grauert--Riemenschneider vanishing when $p > 5$ \cite{BK23}.
    \item When $p = 2$, there even exists such an $X$ which is $F$-pure, and such that $K_X$ is Cartier (\autoref{rem: char_2_even_worse}). We do not know whether such examples exist for $p \in \{3, 5\}$ (see also \cite{Tot24}*{Remarks 6.2 and 8.2}).
\end{enumerate}
\end{remark}
Let us explain the shorthand $R^if_*\omega_Y \sim_C 0$.
The sheaf $\omega_Y$ of top differential forms on a smooth algebraic variety in positive characteristic carries an additional operation  first introduced by Cartier \cite{Cartier_une_nouvelle_operation_sur_les_formes_differentielles}: the Frobenius trace map
$
C\colon F_*\omega_Y \to \omega_Y$.
We write $R^if_*\omega_Y \sim_C 0$, and say that $X$ satisfies the Frobenius--stable GR vanishing, if for some $n>0$, the operator $(R^if_*C)^n$ 
vanishes for $i>0$.

Note that the cone counterexamples to the classical GR vanishing actually satisfy Frobenius--stable GR vanishing (see \cite{BBLSZ23}*{Proposition 5.18} or \autoref{ex: Frobenius-stable-GR-cone}). For this reason, we turn to another source of pathologies in characteristic $p$ to construct a counterexample: wild quotient singularities.

Indeed, Totaro's example of terminal 3-fold singularities $(x \ \in X)$ in characteristics $p \leq 5$ are obtained as (non-linear) quotients of $\mathbb{Z}/p\mathbb{Z}$ on a smooth 3-fold \cites{Tot19, Tot24} give the desired counterexamples.
To verify that $R^1f_*\omega_Y \not\sim_C 0$, instead of directly computing the operator $R^if_*C$ from a resolution $Y$, we reduce the non-vanishing in terms of the Frobenius action on local cohomology.
We first prove that the Frobenius action on $R^if_*\mathcal{O}_Y$ is nilpotent on 3-fold klt singularities (\autoref{thm: intro2}), which is the main technical result of this article (see \autoref{ss-fp-rat} for the details on these results). Having shown this, we can then apply the duality theory for Cartier crystals \cite{Bau23} to show the equivalence between the vanishing $R^1f_*\omega_Y \sim_C 0$ and the nilpotency of the Frobenius action on $H^2_x(\cO_{X,x})$.
Finally, some computations in group cohomology as in \cite{Forgarty_On_the_depth_of_local_rings_of_invariants_of_cyclic_groups} permit to conclude the proof when $p \leq 5$.

To obtain counterexamples for arbitrary $p > 0$, we also consider $\mathbb{Z}/p\mathbb{Z}$-quotients. Since these are not klt, we first compute by hand a partial resolution $\pi \colon Z \to X$, where $Z$ has toric singularities, and show directly that the Frobenius action on $R^i\pi_*\cO_Z$ is nilpotent for $i > 0$. 
Since $Z$ has toric singularities, we deduce that if $f \colon Y \to Z \to X$ is a resolution of singularities, then the Frobenius action on each $R^if_*\cO_Y$ is nilpotent for $i > 0$. We can then conclude that $Y \to X$ is a counterexample to GR vanishing up to nilpotence as above. \\

Let us compare our results with Bhatt vanishing for big and semi-ample line bundles in characteristic $p>0$ (see \cite{Bha12}*{Proposition 6.3}). 
Bhatt's theorem can be regarded as a Kawamata--Viehweg vanishing up to finite covers, meaning that non-zero cohomology classes are killed after some finite pull-back.
In general, we prove that the Frobenius morphism and its iterates are not sufficient to kill cohomology classes in the projective case (e.g. \autoref{cor: counter3}) and the failure of Frobenius--stable GR vanishing is a local analogue of this phenomenon.

\addtocontents{toc}{\SkipTocEntry}
\subsection{$\mathbb{F}_p$-rational singularities} \label{ss-fp-rat}

We found the counterexample to Frobenius--stable GR vanishing while looking for a weakening of the notion of rational singularities in positive characteristic that could help to understand klt singularities in positive characteristic. Let us explain our starting motivation.

The study of the singularities of the MMP played a crucial role in the development of higher dimensional algebraic geometry, both in characteristic 0 (see \cite{kk-singbook} and references therein) and $p>0$ (see \cites{Tan18, HX15, HW-MMp-5, HW22, HW19}).
This topic is closely intertwined with properties of Fano varieties given that klt singularities can be viewed as a local counterpart of Fano varieties \cite{LX16}*{Section 2.4}. 
In characteristic 0, a Fano variety satisfies the Kodaira vanishing theorem (which in particular implies that $H^i(X, \cO_X)=0$ for $i>0$) and a fundamental result of Elkik  \cite{Elk81} shows that klt singularities are Cohen--Macaulay and rational.
The situation is more complicated in characteristic $p>0$: Kodaira vanishing does not hold in general for Fano varieties \cites{Kov18, Tot19} and, furthermore, klt singularities may not be rational starting from dimension 3 \cites{CT19, GNT19, Tot19, Yas19, Ber21, ABL22}. 

From this perspective, we decided to explore whether a Frobenius--stable version of the Kodaira vanishing theorem holds for Fano varieties and investigate its consequence for cohomological properties of klt singularities. 
Our work was also driven by earlier findings in the literature concerning the $W\cO$-rationality and $\mathbb{Q}_p$-rationality of singular Fano varieties and klt singularities.
In \cites{GNT19, HW22}, it has been demonstrated that 3-fold klt singularities over perfect fields with characteristic $p>0$ are $W\cO$-rational, a weakening of rational singularities.
Moreover, in \cite{NT20} it is shown that the higher direct images $R^if_*W\cO_{X, \bQ}$ vanish for a Fano type morphism $f \colon X \to Z$ from a 3-fold $X$. Their result is then used in \cite{HW20} to conclude that 4-fold klt singularities over perfect fields are $W\cO$-rational.
These results immediately imply the analogue vanishing for the higher direct images $R^if_*\mathbb{Q}_p$.

We show that one can promote the vanishing in $\mathbb{Q}_p$-cohomology for klt singularities and Fano type varieties of \cite{NT20} to a vanishing of cohomology with $\mathbb{F}_p$-coefficients.
This can be interpreted via the Riemann--Hilbert (RH) correspondence for $\mathbb{F}_p$-constructible sheaves \cites{Bockle_Pink_Cohomological_Theory_of_crystals_over_function_fields, Bhatt_Lurie_RH_corr_pos_char}: asking the topological vanishing $R^if_*\bF_p=0$ for $i>0$ is equivalent to requesting the natural action of the Frobenius on $R^if_*\cO_X$ to be nilpotent for $i>0$ (denoted by $R^if_*\cO_X \sim_F 0$). 
Taking this perspective, our result on the cohomology of Fano varieties is the following:

\begin{theorem}[cf.~\autoref{prop: nilp_vanishing_dP} and \autoref{thm: vanishing_3fold_Fanotype}] \label{thm: intro_1}
    Let $f \colon X \to Z$ be a morphism of Fano type over an $F$-finite field $k$.
    \begin{enumerate}
        \item\label{inrto1-(1)} If $\dim(X)=2$, then $R^if_*\cO_X \sim_F 0$ for $i>0$;
        \item\label{inrto1-(2)} If $\dim(X)=3$, $k$ is perfect and $p>3$, then $ R^if_*\cO_X \sim_F 0$ for $i>0$.
    \end{enumerate}
\end{theorem}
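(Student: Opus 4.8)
The plan is to pass, via the Riemann--Hilbert correspondence for $\mathbb{F}_p$-sheaves (equivalently, the Cartier-crystal duality of \cite{Bau23}), to the equivalent statement $R^if_*\mathbb{F}_p=0$ for $i>0$; since $f$ is proper this may be tested on geometric stalks, where by proper base change it reads $H^i_{\et}(X_{\bar z},\mathbb{F}_p)=0$ for all $z\in Z$. The two inputs I would feed in are: the rational Witt-vector vanishing $R^if_*W\cO_{X,\mathbb{Q}}=0$ for Fano-type morphisms, which is elementary when $\dim X=2$ and is the theorem of Nakamura--Tanaka \cite{NT20} when $\dim X=3$; and the fact that a klt singularity of dimension $\le 4$ is both $W\cO$-rational (\cite{GNT19}) and $\mathbb{F}_p$-rational (\autoref{thm: intro2}), so that $R^if_*\mathbb{F}_p$ and $R^if_*W\cO_{X,\mathbb{Q}}$ may be computed on any resolution of $X$. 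These are linked by the slope dictionary: for a smooth proper variety $Y$, $H^i(Y,W\cO_{Y,\mathbb{Q}})=0$ forces all slopes of $H^i_{\mathrm{crys}}(Y/W)$ to be $\ge 1$, hence kills the slope-$0$ part of $H^i(Y,\cO_Y)$, hence makes $F$ nilpotent on $H^i(Y,\cO_Y)$ and so (Artin--Schreier over an algebraically closed field) $H^i_{\et}(Y,\mathbb{F}_p)=0$.

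For (1), with $\dim X=2$ one has $R^if_*\cO_X=0$ for $i\ge 2$: the fibres are at most curves when $\dim Z\ge 1$, and when $Z$ is a point $X$ is a klt del Pezzo surface, so $H^2(X,\cO_X)=H^0(X,\omega_X)^\vee=0$ since $-K_X$ is big. For $R^1f_*\cO_X$ I would pass to the minimal resolution $\widetilde X$ (klt surface singularities are rational), use that $-K_{\widetilde X}$ is big so $\widetilde X$ is geometrically a rational surface and hence $H^1(\widetilde X,\cO_{\widetilde X})=0$ when $Z$ is a point, and reduce the relative case to nilpotence of the Frobenius on the torsion sheaf $R^1(\widetilde X\to Z)_*\cO_{\widetilde X}$, whose generic fibre is a smooth conic; this is \autoref{prop: nilp_vanishing_dP} and is handled over the finitely many points below the degenerate fibres, e.g. by restricting the Artin--Schreier sequence to strict Henselisations, or by the slope argument over the curve $Z$.

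For (2), assume $\dim X=3$, $k$ perfect, $p>3$. The ``generic'' part is clean: a general fibre $X_{\bar z}$ of $f$ is a geometrically klt Fano-type variety of dimension $\le 3$, and $H^i_{\et}(X_{\bar z},\mathbb{F}_p)=0$ for $i>0$ --- by (1) in dimensions $\le 2$, and in dimension $3$ by resolving, passing to Witt cohomology, invoking \cite{NT20} over the residue field, and running the slope dictionary. The substance is the special fibres, and here I would exploit that $\mathbb{F}_p$-rationality renders the relative MMP invariant: two birational klt $3$-folds over $Z$ that are both $\mathbb{F}_p$-rational have, via a common resolution, the same $R^{\bullet}(-\to Z)_*\mathbb{F}_p$, so a relative $(K_X+\Delta)$-MMP over $Z$ --- which terminates because $f$ is of Fano type and $p>3$ --- leaves $R^if_*\mathbb{F}_p$ unchanged. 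We may therefore assume $f$ factors as $X\xrightarrow{\pi}Z'\to Z$ with $\pi$ a Mori fibre space; as $Z'$ is of Fano type over $Z$ of dimension $<3$, (1) and the cases $\dim\le 1$ give $R^i(Z'\to Z)_*\mathbb{F}_p=0$ for $i>0$, so by Leray it suffices to handle $\pi$, and one treats the three cases $\dim Z'\in\{0,1,2\}$ --- a klt Fano $3$-fold, a del Pezzo fibration, a conic bundle --- by combining $R^i\pi_*W\cO_{X,\mathbb{Q}}=0$ from \cite{NT20} with the slope dictionary over the generic point of $Z'$ and a Noetherian induction on the degeneration locus, which stays within the Fano-type setting by inversion of adjunction.

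The main obstacle is precisely this last step: controlling the $\mathbb{F}_p$-cohomology of the special fibres of $\pi$, which can a priori be non-normal or non-klt, and upgrading the generic slope estimate across the degeneration locus. This is where the hypothesis $p>3$ genuinely intervenes --- through termination of the $3$-fold MMP, the structure of $3$-fold Mori fibre spaces, and the tameness of the fibres they produce in that range --- and it is the local-to-global analogue of the wild-quotient pathology which, as the rest of the paper shows, really does destroy the corresponding statement for $\omega_Y$ once $p\le 5$.
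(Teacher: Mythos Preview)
Your outline has the right architecture --- reduce to $H^i_{\et}(-,\bF_p)=0$ via the RH correspondence and feed in Witt-vector vanishing --- but the ``slope dictionary'' step is where the argument breaks. The implication
\[
H^i(Y,W\cO_{Y,\bQ})=0 \;\Longrightarrow\; F \text{ nilpotent on } H^i(Y,\cO_Y)
\]
is \emph{not} valid degree-by-degree, nor even for all $i>0$ at once. Rational Witt vanishing gives only $H^i_{\et}(Y,\bQ_p)=0$, i.e.\ $H^i_{\et}(Y,\bZ_p)$ is finite $p$-torsion; but the Bockstein sequence shows $H^i_{\et}(Y,\bF_p)$ receives a contribution from $H^{i+1}_{\et}(Y,\bZ_p)[p]$, which need not vanish. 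The paper closes this gap with two independent inputs you did not invoke: rational chain connectedness of (a resolution of) a klt Fano-type $3$-fold gives $H^1_{\et}(Y,\bF_p)=0$ via $\pi_1^{\et}$ having no $\bZ/p\bZ$-quotient, and bigness of $-K_X$ plus $\bF_p$-rationality give $H^3_{\et}(Y,\bF_p)=0$. With $H^1$ and $H^3$ known, a short $\bZ/p^j\bZ$-coefficient argument forces $p^N$ to act injectively on the torsion group $H^2_{\et}(Y,\bZ_p)$, hence $H^2_{\et}(Y,\bZ_p)=0$ and finally $H^2_{\et}(Y,\bF_p)=0$. This is \autoref{prop:Esnault}; the RCC input is essential and cannot be extracted from $W\cO_{\bQ}$-vanishing alone.

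For the relative case in (2) your plan also diverges from the paper, and here the paper's route genuinely avoids the obstacle you flagged. Rather than running an MMP to a Mori fibre space and then wrestling with its special fibres, the paper extracts a Koll\'ar component over each closed point $z\in Z$ (\cite{GNT19}*{Proposition 2.15}): one produces a plt pair $(Y,\Delta_Y)$ over $Z$ with $(Y_z)_{\red}=\lfloor\Delta_Y\rfloor$ a prime divisor $S$, normal up to universal homeomorphism, whose normalisation is of del Pezzo type by adjunction. Proper base change for $F$-crystals and $\bF_p$-rationality of klt $3$-folds then reduce the stalk $R^if_*\cO_X\otimes k(z)$ to $H^i(S^\nu,\cO_{S^\nu})$, which is handled by part (1). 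There is no case analysis on $\dim Z'$ and no need to control degenerations of conic bundles or del Pezzo fibrations.

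Finally, for (1) with $\dim Z\geq 1$ the paper simply quotes Tanaka's relative Kawamata--Viehweg vanishing for excellent surfaces, which already gives $R^if_*\cO_X=0$ on the nose; your fibrewise Artin--Schreier analysis is unnecessary. For $\dim Z=0$ your idea is correct in spirit, but over an imperfect $F$-finite field one must pass to $\bigl((S_{\bar k})_{\red}\bigr)^\nu$ via a universal homeomorphism before resolving, since $S_{\bar k}$ need not be reduced.
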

\begin{remark}
    \begin{enumerate}
        \item In \autoref{thm: intro_1}.\autoref{inrto1-(1)}, we cannot expect $R^if_*\cO_X=0$ in general at least in characteristic $p=2$ or $3$ over imperfect fields (\cites{Maddock,  Sch07} and \cite{Tan20}*{Theorem 1.4}). 
        Nevertheless, we know that if $\dim\,f(X)\geq 1$ (resp.~$Z=\Spec(k)$ and $p>5$) then $R^if_*\cO_X=0$ by \cite{Tan18}*{Theorem 3.3} (resp.~\cite{BT22}*{Theorem 1.7}). We do not know if there exists a surface of del Pezzo type over an imperfect field of characteristic $5$ with non-zero irregularity.
        \item In \autoref{thm: intro_1}.\autoref{inrto1-(2)}, the assumption that $p>3$ is used only for the (absolute) MMP for 3-folds.
              In particular, for a $\Q$-factorial klt Fano 3-fold $X$ over a perfect field of characteristic $p$ with Picard number 1, we can show that $H^i(X, \sO_X)\sim_{F} 0$ for all $i>0$ and all $p>0$ (see \autoref{thm:klt Fano}.\autoref{klt Fano (a)}). 
              It is important to note that if $p=2$, then $H^2(X,\sO_X)= 0$ does not hold in general \cite{CT19}*{Theorem 1.4}, and it is open whether $H^i(X,\sO_X)=0$  for all $i>0$ holds for large $p\gg 0$.
              
              In \autoref{thm: intro_1}.\autoref{inrto1-(2)}, we know that if $p>5$ and $\dim\,f(X)\geq 1$ then $R^if_*\cO_X=0$ by \cite{BK23}*{Theorem 5.1}. Moreover, if $X$ is a smooth Fano 3-fold we know that $H^i(X, \cO_X)=0$ for $i>0$ holds by \cite{SB97} (cf.~\cite{Kaw21}*{Corollary 3.7} or \cite{FanoI}*{Theorem 2.4}). 
    \end{enumerate}
\end{remark}

\begin{remark}
    A similar principle, where taking powers of the Frobenius fixes the possible pathologies for contractions of Fano type, appears in the $p$-power version of the base point free theorem proven by Tanaka \cites{tan-p-power}.
\end{remark}

Following the global-to-local principle, we are to able to deduce, using the birational MMP up to dimension 4 and \autoref{thm: intro_1}, the $\mathbb{F}_p$-rationality of klt singularities up to dimension $4$.

\begin{theorem}[cf.~\autoref{prop: 3-fold_sing} and \autoref{cor: 4folds_klt}]\label{thm: intro2}
    Let $(R, \mathfrak{m})$ be a local $k$-algebra essentially of finite type such that $X=\Spec(R)$ is of klt type. 
    Let $f \colon Y \to X$ be a projective resolution of singularities.
    \begin{enumerate}
        \item\label{inrto2-(1)} If $\dim(X)=3$, then $ R^if_*\cO_Y\sim_F 0$ for $i>0$.
        \item\label{inrto2-(2)} Assume that $k$ is perfect and the existence of log resolution of singularities for 4-folds over $k$.
        If $\dim(X)=4$ and $p>5$, then $R^if_*\cO_Y \sim_F 0$ for $i>0$.
    \end{enumerate}
\end{theorem}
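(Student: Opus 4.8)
The plan is to prove this by the global-to-local principle: turn a log resolution of the klt singularity into a relative minimal model by running a birational MMP over $X$, and control every extremal step with \autoref{thm: intro_1}.

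\textbf{Setup.} The assertion is local, so let $X=\Spec R$ with $(R,\m)$ local. For $X$ affine, $R^if_*\cO_Y$ is the quasi-coherent sheaf attached to $H^i(Y,\cO_Y)$, so ``$R^if_*\cO_Y\sim_F 0$'' says exactly that the natural Frobenius on $H^i(Y,\cO_Y)$ is nilpotent; by the Riemann--Hilbert correspondence \cite{Bhatt_Lurie_RH_corr_pos_char} --- equivalently, working in the category of Cartier crystals of \cite{Bau23} --- this is the vanishing of the constructible étale sheaf $R^if_*\bF_p$. I would run the entire argument on the crystal side, where $\sim_F$-trivial objects behave as zero objects and every Leray, base-change and edge map is automatically a morphism of crystals; then ``$\sim_F 0$'' passes through spectral sequences and exact sequences for free, with no Frobenius-compatibility to check by hand. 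Two resolutions of $X$ are dominated by a third $W$; since a regular scheme has rational singularities \cite{CR12}, $Rg_*\cO_W=\cO_Y$ for $g\colon W\to Y$, compatibly with Frobenius, so $R^\bullet f_*\cO_Y$ is, as a crystal, independent of the resolution. I may therefore fix $Y$ to be a log resolution of a klt pair $(X,\Delta)$ and, passing to a small $\bQ$-factorial modification (available in dimension $\le 4$ under the running hypotheses), assume $X$ is $\bQ$-factorial.

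\textbf{The MMP and the telescoping.} Set $\Gamma=f^{-1}_*\Delta+(1-\varepsilon)\Exc(f)$ with $0<\varepsilon\ll 1$ chosen so that $1-\varepsilon$ misses every value $-a(E_i;X,\Delta)$. Then $(Y,\Gamma)$ is klt (log smooth, all coefficients $<1$) and $K_Y+\Gamma\equiv_X\sum_i(a(E_i;X,\Delta)+1-\varepsilon)E_i=:P\geq 0$ is effective and $f$-exceptional, so I run the $(K_Y+\Gamma)$-MMP over $X$ --- the birational MMP in dimension $\le 4$ invoked in the introduction; it needs $p>5$ and the assumed existence of log resolutions when $\dim X=4$. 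Each extremal contraction has relative Picard number one and is $(K_Y+\Gamma)$-negative, so its positive-dimensional fibres are Fano varieties of dimension $<\dim X$, and cutting the target by general hyperplanes through a point reduces $R^q$ of the contraction to $H^q$ of such a Fano variety, which is $\sim_F 0$ by \autoref{thm: intro_1}. Concretely: for a divisorial contraction $\phi\colon Y_i\to Y_{i+1}$ one gets $R^q\phi_*\cO_{Y_i}\sim_F 0$ for $q>0$ (and $=\cO_{Y_{i+1}}$ for $q=0$), so the Leray spectral sequence for $Y_i\to Y_{i+1}\to X$ degenerates in crystals and $R^\bullet(Y_i\to X)_*\cO\sim_F R^\bullet(Y_{i+1}\to X)_*\cO$; for a flip $Y_i\dashrightarrow Y_{i+1}$ over $Z_i$ one applies the same to the two small Fano type morphisms $Y_i\to Z_i$ and $Y_{i+1}\to Z_i$. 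By the negativity lemma the output of the MMP is a crepant small $\bQ$-factorial modification of $X$, hence (as $X$ is $\bQ$-factorial) $X$ itself; telescoping down the MMP therefore gives $R^if_*\cO_Y\sim_F 0$ for all $i>0$. When $\dim X=3$ every contraction used has relative Picard number one, where \autoref{thm: intro_1} holds for \emph{all} $p$ (as in the Picard-number-one remark following its statement), together with the surface case \autoref{thm: intro_1}.\autoref{inrto1-(1)} for the sliced models --- this is why part (1) has no hypothesis on $p$.

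\textbf{Main obstacle.} The local-cohomological translation and the reductions are routine; the work is concentrated in the second step, and in particular in checking that ``$\sim_F 0$'' survives each flip and divisorial contraction. The point is that a log resolution is \emph{not} itself a Fano type morphism over $X$, so one cannot feed it to \autoref{thm: intro_1} directly: one has to dismantle the MMP one extremal ray at a time, each time applying \autoref{thm: intro_1} to the lower-dimensional Fano fibres cut out by general hyperplane sections and propagating the vanishing through the resulting blow-up/Leray spectral sequences --- which is where the Cartier-crystal formalism of \cite{Bau23} earns its keep in keeping the Frobenius actions aligned. In dimension $4$ this forces $p>5$, both for termination of the birational MMP and for \autoref{thm: intro_1}.\autoref{inrto1-(2)}; in dimension $3$ one must moreover rely on the relative-Picard-number-one, all-characteristic form of \autoref{thm: intro_1}, since \autoref{thm: intro_1}.\autoref{inrto1-(2)} as stated needs $p>3$.
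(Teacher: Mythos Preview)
Your overall architecture---run a birational MMP from a log resolution and telescope the crystal $\mathbf{R}f_*\cO_Y$ down to $\cO_X$ step by step---is exactly the paper's, but two of your intermediate steps do not go through as written.

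\textbf{The flip step is broken.} You assert that for a flip $Y_i\dashrightarrow Y_{i+1}$ over $Z_i$ one ``applies the same to the two small Fano type morphisms $Y_i\to Z_i$ and $Y_{i+1}\to Z_i$''. Only the first of these is Fano type: on the flipped side $K_{Y_{i+1}}+\Gamma_{i+1}$ is \emph{ample} over $Z_i$, not anti-ample, so $Y_{i+1}\to Z_i$ is not a Fano type contraction with the running boundary, and you have no way to show $R^q(Y_{i+1}\to Z_i)_*\cO\sim_F 0$. The paper fixes this by running the MMP with the \emph{full} reduced exceptional divisor $E$ (a dlt boundary), not your klt perturbation $(1-\varepsilon)E$, and by using Koll\'ar's non-$\bQ$-factorial MMP (\autoref{thm: kollar-notqfactorial-4folds}), which guarantees that at every flipping step there exist boundary components $S,T\subset\lfloor\Delta\rfloor$ with $-S$ ample on the flipping side and $-T^+$ ample on the flipped side. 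One then runs the pl-trick on $T^+$: the sequence $0\to\cO_{Y_{i+1}}(-mT^+)\to\cO_{Y_{i+1}}\to\cO_{mT^+}\to 0$, Serre vanishing for large $m$, and adjunction to $(T^+)^{\nu}$ (which \emph{is} of Fano type over $Z_i$ in one dimension less) give $R^q(Y_{i+1}\to Z_i)_*\cO\sim_F 0$; see \autoref{lem: rationality_flips}. Your klt boundary destroys precisely this pl-structure.

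\textbf{The reduction to lower dimension is not what you say it is.} Even for a divisorial contraction $\varphi\colon Y_i\to Y_{i+1}$, the scheme-theoretic fibres are not ``Fano varieties'': they can be non-reduced, non-normal, reducible, and there is no reason $H^q(\varphi^{-1}(y),\cO)\sim_F 0$ from a bare description of the fibre. ``Cutting the target by general hyperplanes'' does not help either---you still land on a possibly singular, non-Fano scheme. The paper again uses the anti-ample pl-divisor $S$ (guaranteed by the dlt MMP): one restricts via $0\to\cO(-mS)\to\cO\to\cO_{mS}\to 0$ and uses that $S^{\nu}\to\varphi(S)$ is a Fano type morphism of dimension $n-1$ by adjunction (\autoref{lem: rationality_contraction}). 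Finally, your preliminary reduction to $\bQ$-factorial $X$ via a small modification is circular (it presupposes $\bF_p$-rationality of the intermediate model), which is another reason the paper works over a possibly non-$\bQ$-factorial base using Koll\'ar's MMP; and the invocation of the ``Picard-number-one remark'' is a red herring---what makes the $3$-fold case work in all characteristics is simply that the adjunction step lands on surfaces, where \autoref{prop: nilp_vanishing_dP} holds for every $p$.
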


\begin{remark}
   \begin{enumerate}
       \item \autoref{thm: intro2}.\autoref{inrto2-(1)} is really a novelty in the case of perfect field of characteristic $p \leq 5$ and imperfect fields, as klt 3-fold singularities are rational over perfect fields of characteristic $p>5$ \cites{HW19, ABL22, BK23}.
       It is worth mentioning that, when $p\leq 5$, we cannot expect $ R^if_*\cO_Y=0$ for $i>0$ in general even if $k$ is algebraically closed \cites{CT19, Ber21, ABL22}.
    \item In \autoref{thm: intro2}.\autoref{inrto2-(2)}, the assumption that $p>5$ is used only for the MMP in dimension at most 4 developed in \cite{HW20}.
    To the best of our knowledge, it is open whether there exists a bound $p_0\in\Z_{>0}$ for klt 4-folds such that $R^if_{*}\sO_Y=0$ in characteristic $p>p_0$. 
   \end{enumerate}
\end{remark}

We conclude the article with an application to a new vanishing theorem for $K$-trivial 3-folds in characteristic $p>0$. 
We show that a version of the Kawamata--Viehweg vanishing theorem holds on $K$-trivial 3-folds up to the action of the Frobenius.

\begin{theorem}[cf.~ \autoref{thm: kvv-up-frob}]
    Let $X$ be a projective 3-fold over an $F$-finite field with $K_X \sim_{\mathbb{Q}} 0$ and klt singularities.
    If $L$ is big and semiample line bundle, then for $e \gg 0$, the morphism $F^e \colon H^i(X, L) \to H^i(X, L^{\otimes p^e}) $ is zero for $e \gg 0$ and $i > 0$.
    
    If $X$ is moreover assumed to be globally $F$-split, then $H^i(X, L)=0$ for all $i > 0$.
    \end{theorem}

We find it interesting that for $K$-trivial 3-folds the Frobenius morphism is enough to kill cohomology classes of a big and semiample line bundle, although this is not the case in general (see \autoref{cor: counter3}). \\

Note that our techniques also give partial vanishing results for $K$-trivial 4-folds, see \autoref{rem: vanishing_CY_fourfolds}.

\addtocontents{toc}{\SkipTocEntry}
\subsection{Organisation of the article}
We briefly explain the organisation of the article. 
In \autoref{s-prelims}, we recall the notions of Frobenius crystals and Cartier crystals, which give the correct framework to study properties that are stable under the action of the Frobenius. 
We also include the tools from the MMP that are relevant for our purposes.
In \autoref{s-def}, we introduce the Frobenius--stable version of the notion of singularities we are interested in: Cohen--Macaulay up to Frobenius nilpotence, $\mathbb{F}_p$-rationality and the Frobenius--stable version of GR vanishing theorem. 
We relate the Frobenius action on local cohomology of an $\mathbb{F}_p$-rational singularity to the Cartier structure on the higher direct images of the dualising sheaf (\autoref{lem: equiv_conj_GR_CM}) and we compare $\mathbb{F}_p$-rationality with the notions of $\mathbb{Q}_p$ and $W\cO$-rationality (\autoref{ss-comparison}).
In \autoref{s: global-local}, we implement the global--to--local principle: the MMP provides a tool to show the $\mathbb{F}_p$-rationality of klt singularities from the vanishing up to Frobenius nilpotence of the cohomology of Fano varieties (\autoref{thm: F_p-rationality-Fano}).
We then show \autoref{thm: intro_1}.\autoref{inrto1-(1)} and we deduce $\mathbb{F}_p$-rationality of klt singularities in dimension 3. 
From this, we are able to obtain \autoref{thm: GR_fails_intro}.
Finally in \autoref{s: vanishing3foflds} we show the nilpotence of Frobenius on 3-folds of Fano type over perfect fields (\autoref{thm:threefold}) building on their rational chain-connectedness and some computations with $p$-adic cohomology theories. 
Using the MMP developed in \cite{GNT19} and \cite{HW22}, we show \autoref{thm: intro_1}.\autoref{inrto1-(2)} and \autoref{thm: intro2}.\autoref{inrto2-(2)}.
In the last section, we show how to apply the previous results to obtain a Frobenius--stable vanishing theorem on klt Calabi--Yau 3-folds.

\addtocontents{toc}{\SkipTocEntry}
\subsection*{Acknowledgements}
We would like to thank Zs.~Patakfalvi, C.~D.~Hacon, S.~Filipazzi, K.~Schwede,  T.~Takamatsu, B.~Totaro, and the referees for useful discussions and comments on the content of this article. 

JB was partly supported by the grant \#200020B/192035 from the Swiss National Science Foundation, FB was partly supported by the grants \#200020B/192035 and PZ00P2-216108 from the Swiss National Science Foundation and TK was partly supported by JSPS KAKENHI Grant numbers JP22KJ1771 and JP24K16897, and by the Inamori Foundation.
Part of this work was done while the second author was visiting Kyoto University, and he would like to express his gratitude to TK for the warm hospitality.

\section{Preliminaries} \label{s-prelims}

\subsection{Notations}

\begin{enumerate}
    \item Throughout the article, we fix a prime number $p>0$.
    \item For an $\mathbb{F}_p$-scheme $X$, we denote by $F \colon X \to X$ its (absolute) Frobenius morphism, and we use the same notation for $\mathbb{F}_p$-algebras. We say that an $\mathbb{F}_p$-scheme $X$ (resp.~algebra) is $F$-finite if the Frobenius is a finite morphism. 
    Whenever we say $F$-finite scheme (or algebra), we intend an $F$-finite $\mathbb{F}_p$-scheme. 
    \item On an $\bF_p$-scheme $X$, we define the perfection of $\mathcal{O}_X$ as 
    \[ \cO_X^{1/p^{\infty}} \coloneqq \varinjlim \: (\cO_X \xrightarrow{F} F_*\cO_X \xrightarrow{F} F^2_*\cO_X \xrightarrow{F} \dots) \] (see \cite{Bhatt_Lurie_RH_corr_pos_char}). We use the same notation for $\bF_p$-algebras.
    \item Given a scheme $X$, we denote by $X_{\red}$ its reduced subscheme. If $X$ is integral, we denote by $\nu \colon X^{\nu} \to X$ its normalisation and by $k(X)$ its function field.
    \item We say $X$ is a variety over $k$ (or simply a variety) if it is an integral separated scheme of finite type over $k$.
    \item We say $X$ is a curve (resp.~surface, 3-fold, 4-fold) over $k$ if $X$ is a variety over $k$ of dimension one (resp.~two, three, four). 
    \item Given an abelian category $\mathcal{A}$, we denote by $D(\mathcal{A})$ (resp.~$D^b(\mathcal{A})$) its (bounded) derived category.
    Given a complex $K^{\bullet}$ in $D(\mathcal{A})$ and $i \in \bZ$, we denote by $\mathcal{H}^i(K^{\bullet})$ its $i$-th cohomology.
    \item Given a variety $X$ over $k$ with structural morphism $\pi \colon X \to\Spec k$, we define the canonical dualising complex $\omega_X^{\bullet} \coloneqq \pi^!\cO_{\Spec k}$ (see \stacksprojs{0A9Y}{0ATZ}). 
    The canonical dualising sheaf is defined as the lowest non-zero cohomology sheaf of the dualising complex: 
    $\omega_X \coloneqq \mathcal{H}^{-\dim X}(\omega_X^{\bullet})$. For example, if $X$ is proper and smooth over $k$, then $\omega_X$ agrees with the usual sheaf of top differentials \stacksproj{0AU3}.
    If $X$ is normal, then $\omega_X$ is a reflexive sheaf of rank 1 \stacksproj{0AWE}, and we denote by $K_X$ a Weil divisor such that $\omega_X \cong \cO_X(K_X)$. Whenever we work with varieties over $k$, we will implicitly work with the canonical dualising complexes and sheaves described above.
    \item In general, if $\omega_X^{\bullet}$ is a dualising complex on a Noetherian integral scheme $X$, then we set $\omega_X \coloneqq \mathcal{H}^j(\omega_X^{\bullet})$, where $j$ is the minumum of all integers $i$ such that $\mathcal{H}^i(\omega_X^{\bullet}) \neq 0$.
    \item We say $(X,\Delta)$ is a \emph{pair} if $X$ is a normal variety, $\Delta$ is an effective $\mathbb{Q}$-divisor and $K_X+\Delta$ is $\mathbb{Q}$-Cartier. 
    We say $\Delta$ is a boundary if its coefficients lie in the interval $[0,1]$. 
    \item Given a variety $X$, we say that $f \colon Y \to X$ is a \emph{resolution} if $f$ is a projective birational morphism such that $Y$ is regular. If in addition the exceptional locus $\Ex(f)$ is purely divisorial and snc, then we say that $f$ is a \emph{log resolution}.
    \item Given a variety $X$, we say that $f \colon Y \to X$ is a \emph{quasi-resolution} if $f$ is a projective and generically finite morphism, $Y$ is a topological finite quotient \cite[Definition 4.2.5]{CR12}, and the extension of the function fields $k(Y) \subset k(X)$ is purely inseparable.
\end{enumerate}

\subsection{Frobenius and Cartier modules} \label{ss: F-Cartier-mod}

In this section, we recall the definitions and the basic results on the theory of Frobenius modules and Cartier modules we need, which are contained in \cites{Bockle_Pink_Cohomological_Theory_of_crystals_over_function_fields, BB11, BS13, Bhatt_Lurie_RH_corr_pos_char, Bau23}.

\subsubsection{Frobenius modules}

We start with defining Frobenius modules and recalling  the Riemann--Hilbert correspondence in positive characteristic \cites{Bockle_Pink_Cohomological_Theory_of_crystals_over_function_fields, Bhatt_Lurie_RH_corr_pos_char}. Since all the schemes we will be working with are varieties, we will mostly restrict to the Noetherian case. Most of what we say can be generalised to the non-Noetherian setup by \cite{Bhatt_Lurie_RH_corr_pos_char}.

\begin{definition} \label{def: F_mod}
Let $X$ be an $\mathbb{F}_p$-scheme and let $\mathcal{M}$ be an $\mathcal{O}_X$-module.
A \emph{Frobenius-module structure} on $\mathcal{M}$ is an $\mathcal{O}_X$-module homomorphism $\tau \colon \mathcal{M} \to F_* \mathcal{M}$.
We say $(\mathcal{M}, \tau)$ is an \emph{Frobenius module}. If $\mathcal{M}$ is quasi-coherent (resp.~coherent), we say $(\mathcal{M}, \tau)$ is a quasi-coherent (resp.~coherent) Frobenius module.
We say that $f \colon (\mathcal{M}_1, \tau_1) \to (\mathcal{M}_2, \tau_2)$ is a \emph{homomorphism of Frobenius modules} if $ F_*f \circ \tau_1 = \tau_2 \circ f$.  
\end{definition}

We denote by $\QCoh_X^F$ (resp.~$\Coh_X^F$ if $X$ is Noetherian) the category of quasi-coherent (resp.~coherent) Frobenius modules. The category $\QCoh_X^F$ is a Grothendieck category by \cite{Bau23}*{Corollary 2.2.4}.
We illustrate the key examples of Frobenius modules that appear in this article.

\begin{example} \label{ex: F-mod-struct}
For any $\mathbb{F}_p$-scheme $X$, the structure sheaf $\mathcal{O}_X$ comes with the absolute Frobenius $F \colon \mathcal{O}_X \to F_*\mathcal{O}_X$, which induces a natural Frobenius module structure on $\cO_X$. 
Similarly, any ideal sheaf $\mathcal{I} \subset \mathcal{O}_X$ canonically admits a Frobenius module structure.
From this, we can construct new Frobenius module structures.

\begin{enumerate}
    \item Let $f \colon X \to Y$ be a morphism of $\mathbb{F}_p$-schemes. Then the action of the Frobenius $F \colon \mathcal{O}_X \to F_*\mathcal{O}_X$ induces a natural Frobenius module structure on the higher direct images:
    $$R^if_*F \colon R^if_*\mathcal{O}_X \to R^if_*F_* \mathcal{O}_X \cong F_*R^if_* \mathcal{O}_X.$$
    \item Let $(X, x)=(\Spec(R), \mathfrak{m})$, where $R$ is a local $\mathbb{F}_p$-algebra. 
    Then the local cohomology groups $H^i_x(\mathcal{O}_X):=H^i_\mathfrak{m}(R)$ admit a natural structure of a quasi-coherent Frobenius modules.
\end{enumerate}
\end{example}

Given a Frobenius module $(\mathcal{M}, \tau)$ we define the composite 

$$ \begin{tikzcd}
\tau^n \colon \cM \arrow[r, "\tau"] & F^{n -1}_*\cM \arrow[r, "F_*\tau"] & \dots \arrow[r, "F^{n - 1}_*\tau"] & F^n_*\cM.
\end{tikzcd} $$

\begin{definition}
    We say that a Frobenius module $(\mathcal{M}, \tau)$ is \emph{nilpotent} if there exists $n>0$ such that $\tau^n=0$, and that it is \emph{locally nilpotent} if it is a union of nilpotent Frobenius submodules.
\end{definition}

Locally nilpotent Frobenius modules form a Serre subcategory of Frobenius modules by \cite{Bockle_Pink_Cohomological_Theory_of_crystals_over_function_fields}*{Proposition 3.3.5}. 
The category of Frobenius quasi-crystals (resp.~Frobenius crystals) $\QCrys_X^F$ (resp.~$\Crys_X^F$) is the Serre quotient of $\QCoh_X^F$ (resp.~$\Coh_X^F$) by the subcategory of locally nilpotent submodules (see \stacksproj{02MS}). Note that $\QCrys_X^F$ is a Grothendieck category by \cite{Bau23}*{Lemma 2.2.11}.
With an abuse of notation, we will often simply write $\mathcal{M}$ instead of the couple $(\mathcal{M}, \tau)$.

\begin{definition}
Any two quasi-coherent Frobenius modules $\mathcal{M}_1$ and $\mathcal{M}_2$ are \emph{isomorphic up to nilpotence} if the associated Frobenius quasi-crystals are isomorphic. In this case, we write $\mathcal{M}_1 \sim_F \mathcal{M}_2$.
\end{definition}

\begin{lemma}\label{universal_homeomorphism_induces_isomorphism_of_crystals}
    Let $f \colon X \to Y$ be a universal homeomorphism of Noetherian $\bF_p$-schemes. Then the natural morphism $\mathcal{O}_Y \to Rf_*\cO_X$ is an isomorphism in $D(\Crys_X^F)$. 
\end{lemma}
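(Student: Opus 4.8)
The plan is to reduce the statement to a known fact about the perfection of the structure sheaf. Recall that for a universal homeomorphism $f\colon X\to Y$ of Noetherian $\mathbb{F}_p$-schemes, the induced map on perfections $\mathcal{O}_Y^{1/p^\infty}\to \mathbf{R}f_*\mathcal{O}_X^{1/p^\infty}$ is an isomorphism in the derived category; this is essentially the statement that the perfection only sees the topological space together with the étale site, and universal homeomorphisms are isomorphisms after perfection (see \cite{Bhatt_Lurie_RH_corr_pos_char}, or the fact that $X_{\mathrm{perf}}\to Y_{\mathrm{perf}}$ is an isomorphism of schemes since a universal homeomorphism is integral, surjective, radicial). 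First I would record this: applying $\mathbf{R}f_*$ and using that $f$ is affine (a universal homeomorphism is integral, hence affine), one gets that $\mathcal{O}_Y^{1/p^\infty}\xrightarrow{\sim} f_*\mathcal{O}_X^{1/p^\infty}$ is an isomorphism of sheaves of $\mathbb{F}_p$-algebras, and $R^if_*\mathcal{O}_X^{1/p^\infty}=0$ for $i>0$.

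Next I would translate this into a statement about $F$-crystals. The key input is that the localization functor $\Coh_X^F\to\Crys_X^F$ (and its quasi-coherent analogue) kills exactly the locally nilpotent $F$-modules, and that for an $F$-module $(\mathcal{M},\tau)$ the natural map $\mathcal{M}\to \mathcal{M}^{1/p^\infty}:=\varinjlim(\mathcal{M}\xrightarrow{\tau}F_*\mathcal{M}\xrightarrow{F_*\tau}\cdots)$ has locally nilpotent kernel and cokernel, so $\mathcal{M}$ and its colimit become isomorphic in the crystal category. Concretely, the triangle relating $\mathcal{O}_Y$ (resp.\ $\mathbf{R}f_*\mathcal{O}_X$) to its perfection has locally nilpotent cohomology, hence becomes an isomorphism after passing to $D(\QCrys^F)$; this is the mechanism by which the perfection computation descends. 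So the diagram
\[
\begin{tikzcd}
\mathcal{O}_Y \arrow[r] \arrow[d] & \mathbf{R}f_*\mathcal{O}_X \arrow[d] \\
\mathcal{O}_Y^{1/p^\infty} \arrow[r, "\sim"] & \mathbf{R}f_*\mathcal{O}_X^{1/p^\infty}
\end{tikzcd}
\]
has both vertical arrows becoming isomorphisms in $D(\QCrys_X^F)$ and the bottom arrow an honest isomorphism of complexes of $\mathbb{F}_p$-sheaves (equipped with the identity Frobenius); hence the top arrow is an isomorphism in $D(\QCrys_X^F)$, and since everything in sight is coherent, in $D(\Crys_X^F)$.

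A couple of technical points need care. First, one must make sure $\mathbf{R}f_*$ makes sense as a functor on the derived category of $F$-modules compatibly with the forgetful functor to $\mathcal{O}$-modules; since $f$ is affine this is harmless ($\mathbf{R}f_*=f_*$ on quasi-coherent $F$-modules and $f_*$ is exact), so $\mathbf{R}f_*\mathcal{O}_X$ is concentrated in degree $0$ and equals $f_*\mathcal{O}_X$ with its natural $F$-structure. Second, one must justify that "locally nilpotent cohomology sheaves" implies "zero in $D(\Crys^F)$": this follows because locally nilpotent $F$-modules form a Serre subcategory (cited above from \cite{Bockle_Pink_Cohomological_Theory_of_crystals_over_function_fields}*{Proposition 3.3.5}), so a complex all of whose cohomology lies in this subcategory is zero in the localized derived category. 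The main obstacle, and the step I would spend the most care on, is the input that the perfection map $\mathcal{O}_Y^{1/p^\infty}\to f_*\mathcal{O}_X^{1/p^\infty}$ is an isomorphism with vanishing higher pushforwards: concretely one reduces to the affine case $Y=\Spec A$, $X=\Spec B$ with $A\to B$ integral inducing a homeomorphism and purely inseparable residue extensions, and shows $A^{1/p^\infty}\to B^{1/p^\infty}$ is an isomorphism — e.g.\ because for each $b\in B$ some $p$-power $b^{p^n}$ lies in the image of $A$ (radicial plus integral, bounding exponents using Noetherianity), which gives that the two perfections coincide.
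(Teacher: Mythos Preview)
Your proposal is correct and follows essentially the same approach as the paper: reduce to degree zero using that a universal homeomorphism is affine, then show the perfection map $\cO_Y^{1/p^\infty}\to f_*\cO_X^{1/p^\infty}$ is an isomorphism by reducing to the affine case (the paper cites \stacksproj{0CNF} for this last step, which is exactly the statement you sketch). Your write-up is more explicit than the paper's about why passing to perfections suffices at the level of crystals, but the underlying argument is the same.
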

\begin{proof}
    Since $f$ is affine by \stacksproj{04DE}, it suffices to show that $\cO_Y \sim_F f_*\cO_X$.
    We show that the natural morphism $\cO_Y^{1/p^{\infty}} \to f_*\cO_X^{1/p^{\infty}}$ is an isomorphism. Since $f$ is affine, we may assume that both $X$ and $Y$ are affine. The statement then follows from \stacksproj{0CNF}.
\end{proof}

A fundamental theorem in the theory of Frobenius crystals in the \emph{Riemann-Hilbert correspondence} (in short, RH correspondence), which relates Frobenius crystals to constructible $\mathbb{F}_p$-sheaves. 

\begin{definition}[\cite{Bockle_Pink_Cohomological_Theory_of_crystals_over_function_fields}*{10.1.3} and \textup{\cite{Bhatt_Lurie_RH_corr_pos_char}*{Construction 10.2.2}}]
    Given a Frobenius module $(\cM, \tau)$ on an $\bF_p$-scheme $X$, we will define an étale sheaf of $\bF_p$-vector spaces on $X$. Let $U \to X$ be an étale morphism. 
    Pulling back $(\mathcal{M}, \tau)$ gives a Frobenius module $(\cM_U, \tau_U)$ on $U$. We then set \[ \Sol(\cM)(U) \coloneqq \ker(\tau_U - 1 \colon \cM_U \to \cM_U). \]
    This defines a functor $\Sol \colon \QCoh_X^F \to \Sh(X_{\et}, \bF_p)$, where $\Sh(X_{\et}, \bF_p)$ denotes the abelian category of étale sheaves of $\bF_p$-vector spaces on $X$.
\end{definition}

Note that a nilpotent Frobenius module cannot have any fixed points, so when $X$ is Noetherian, $\Sol$ factors through $\Crys_X^F \to \Sh(X_{\et}, \bF_p)$.

\begin{theorem}[{\cite{Bockle_Pink_Cohomological_Theory_of_crystals_over_function_fields}*{Theorems 10.3.6 and 10.4.2}}]\label{thm:RH}
    Let $X$ be a Noetherian $\bF_p$-scheme. Then $\Sol$ induces an equivalence of categories 
    \[ \Crys_X^F \xrightarrow{\cong} \Sh_c(X_{\et}, \bF_p), \]
     where $\Sh_c(X_{\et}, \bF_p)$ denotes the full subcategory of $\Sh(X_{\et}, \bF_p)$ of constructible \'etale sheaves of $\bF_p$-vector spaces.
    Furthermore, for any morphism $f \colon Y \to X$ of Noetherian $F$-finite schemes, the functor satisfies the following compatibility properties:
    \begin{enumerate}
        \item $\Sol (f^*\mathcal{M})\cong f^*\Sol(\mathcal{M})$ for any $\mathcal{M} \in \Crys_X^F$;
        \item if $f$ is proper, then $\Sol (R^if_*\mathcal{N})\cong R^if_*\Sol(\mathcal{N})$ for any $\mathcal{N} \in \Crys_Y^F$ and $i \geq 0$.
    \end{enumerate}
\end{theorem}

\begin{example}[see also \textup{\cite{Bhatt_Lurie_RH_corr_pos_char}*{Theorem 10.5.5}}] \label{ex: RH}
    Let $f \colon Y \to X$ be a proper morphism of Noetherian $\mathbb{F}_p$-schemes.
    In this case, the Frobenius crystals $R^if_*\mathcal{O}_Y$ correspond to $R^if_*\mathbb{F}_{p,Y}$ for every $i \geq 0$ via the RH correspondence as it commutes with higher proper pushforwards.
\end{example}

\begin{lemma}\label{lem: vanishing fibers}
    Let $X$ be a Noetherian $\bF_p$-scheme, and let $\cM \in \Crys_X^F$. Then \[ \cM \sim_F 0 \iff \cM \otimes k(x) \sim_F 0 \: \: \mbox{ for all } x \in X. \]
\end{lemma}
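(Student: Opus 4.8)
The plan is to reduce this to the Riemann--Hilbert correspondence of \autoref{thm:RH}: for a coherent $F$-crystal $\mathcal M$ one has $\mathcal M \sim_F 0$ if and only if the constructible \'etale sheaf $\Sol(\mathcal M)$ vanishes, and $\Sol$ commutes with pullback. Throughout, $\mathcal M \otimes k(x)$ denotes the pullback $i_x^*\mathcal M$ along $i_x \colon \Spec k(x) \to X$, equipped with its induced $F$-module structure (it is a finite-dimensional $k(x)$-vector space, hence coherent over $\Spec k(x)$).

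The forward implication is formal. Pullback induces a functor $i_x^* \colon \Crys_X^F \to \Crys^F_{\Spec k(x)}$, so if $\mathcal M \cong 0$ in $\Crys_X^F$ then $\mathcal M \otimes k(x) = i_x^*\mathcal M \cong 0$ in $\Crys^F_{\Spec k(x)}$, that is $\mathcal M \otimes k(x) \sim_F 0$, for every $x \in X$.

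For the converse I would check $\Sol(\mathcal M) = 0$ on stalks at the conservative family of geometric points $\{\overline x \colon \Spec\overline{k(x)} \to X\}_{x \in X}$. Factor $\overline x$ as $\Spec\overline{k(x)} \xrightarrow{g} \Spec k(x) \xrightarrow{i_x} X$. Since $\Sol$ commutes with pullback, the stalk $\Sol(\mathcal M)_{\overline x}$ is the value on $\Spec\overline{k(x)}$ of $g^*\,\Sol(i_x^*\mathcal M) = g^*\,\Sol(\mathcal M \otimes k(x))$. By the hypothesis $\mathcal M \otimes k(x) \sim_F 0$ and \autoref{thm:RH} applied over the Noetherian scheme $\Spec k(x)$, we have $\Sol(\mathcal M \otimes k(x)) = 0$; hence $\Sol(\mathcal M)_{\overline x} = 0$. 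As this holds for all $x \in X$, we conclude $\Sol(\mathcal M) = 0$, and therefore $\mathcal M \sim_F 0$ by \autoref{thm:RH}.

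There is no real obstacle here beyond \autoref{thm:RH}; the only points requiring a little care are that $\Sol$ is being used in its compatibility with pullback along the (typically non-\'etale, non-finite-type) morphisms $\Spec k(x) \to X$ and $\Spec\overline{k(x)}\to X$, and that a constructible \'etale sheaf is zero as soon as all its geometric stalks are. Alternatively, one can avoid re-invoking \autoref{thm:RH} over $\Spec k(x)$: the condition $\mathcal M \otimes k(x) \sim_F 0$ says the structure map $\tau$ of the finite-dimensional $F$-module $\mathcal M \otimes k(x)$ satisfies $\tau^n = 0$ for some $n$, which persists under the faithfully flat base change $k(x)\hookrightarrow\overline{k(x)}$, so $\mathcal M \otimes \overline{k(x)}$ is nilpotent and $\Sol(\mathcal M)_{\overline x} = \ker(\tau - 1) = 0$ since $\tau v = v$ forces $v = \tau^n v = 0$.
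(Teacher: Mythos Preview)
Your proof is correct and follows the same strategy as the paper: both reduce via the Riemann--Hilbert correspondence (\autoref{thm:RH}) to showing $\Sol(\mathcal M)=0$. The paper then cites \cite{Bau23}*{Lemma 2.2.17} to identify $\Supp(\Sol(\mathcal M))$ with the locus where the fibres are non-nilpotent, whereas you unpack this step directly by computing geometric stalks using the compatibility of $\Sol$ with pullback; your alternative endgame (nilpotence persists under $k(x)\hookrightarrow\overline{k(x)}$, so $\ker(\tau-1)=0$) is a clean way to avoid re-invoking \autoref{thm:RH} over the residue field.
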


\begin{proof}
    The ``left-to-right'' direction is immediate. To see the other direction, note that by \autoref{thm:RH}, it is sufficient to prove that $\Sol(\mathcal{M})=0$, which is equivalent to $\Supp(\Sol( \mathcal{M}))=\emptyset$. Hence, we conclude by \cite{Bau23}*{Lemma 2.2.17}.
\end{proof}

We recall the proper base change theorem for Frobenius crystals.

\begin{theorem}\label{proper_base_change_F_crystals}
    Let $f \colon Y \to X$ be a proper morphism of Noetherian $\bF_p$-schemes, and let $\cM$ be a Frobenius crystal on $Y$. Then for any point $x \in X$, we have \[ R^if_*\cM \otimes k(x) \sim_F H^i(Y_x, \cM_{Y_x}), \]
    where $Y_x$ is the scheme-theoretic fibre over $x$.
\end{theorem}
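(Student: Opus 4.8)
The plan is to reduce the assertion to the ordinary proper base change theorem for torsion étale sheaves by transporting everything through the Riemann--Hilbert correspondence (\autoref{thm:RH}). First I would record the formal set-up. Since $f$ is proper, $\mathbf{R}f_*$ carries $\Crys_Y^F$ into $D^b(\Crys_X^F)$: this is part of the content of \autoref{thm:RH}, which states that $\Sol$ intertwines $\mathbf{R}f_*$ on crystals with $\mathbf{R}f_*$ on constructible $\bF_p$-sheaves, the latter preserving bounded constructible complexes. Hence $R^if_*\cM \coloneqq \mathcal{H}^i(\mathbf{R}f_*\cM)$ is a coherent $F$-crystal on $X$; likewise, writing $g\colon Y_x \to \Spec k(x)$ for the structure morphism of the (scheme-theoretic) fibre and $i\colon Y_x \hookrightarrow Y$, $j\colon \Spec k(x) \hookrightarrow X$ for the two inclusions, $H^i(Y_x, \cM_{Y_x}) = R^ig_*(i^*\cM)$ is a coherent $F$-crystal on $\Spec k(x)$. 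Because $\Sol$ induces an \emph{equivalence} of categories on crystals (hence an exact functor, commuting with $\mathcal{H}^i$ of complexes), it suffices to exhibit an isomorphism between the images under $\Sol$ of the two $F$-crystals $R^if_*\cM \otimes k(x)$ and $H^i(Y_x, \cM_{Y_x})$ on $\Spec k(x)$.

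Next I would compute these images. Set $\cF \coloneqq \Sol(\cM) \in \Sh_c(Y_\et, \bF_p)$. Using that $\Sol$ commutes with pullbacks and with $\mathbf{R}f_*$ (\autoref{thm:RH}), together with exactness of $\Sol$,
\[
\Sol\big(R^if_*\cM \otimes k(x)\big) = \Sol\big(j^*R^if_*\cM\big) = j^*\,\Sol(R^if_*\cM) = j^*R^if_*\cF,
\]
and similarly, using in addition that $\Sol$ commutes with $\mathbf{R}g_*$,
\[
\Sol\big(H^i(Y_x, \cM_{Y_x})\big) = \Sol\big(R^ig_*\, i^*\cM\big) = R^ig_*\, i^*\cF .
\]
Here $\cM_{Y_x}$ is understood as the crystal-theoretic pullback $i^*\cM$ — the operation with respect to which \autoref{thm:RH} asserts $\Sol\circ i^* = i^*\circ \Sol$ — and I use that $Y_x = Y\times_X \Spec k(x)$, so that the evident square relating $i, g, j, f$ is cartesian. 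The base change morphism now furnishes a canonical map $j^*R^if_*\cF \to R^ig_*\, i^*\cF$, and the classical proper base change theorem for $\bF_p$-coefficients (applied to the proper morphism $f$ of Noetherian schemes and the constructible complex $\cF$) says that it is an isomorphism. Applying the inverse equivalence $\Sol^{-1}$ on crystals, we obtain $R^if_*\cM \otimes k(x) \cong H^i(Y_x, \cM_{Y_x})$ in $\Crys_{\Spec k(x)}^F$, i.e.\ $R^if_*\cM \otimes k(x) \sim_F H^i(Y_x, \cM_{Y_x})$, as desired. (Equivalently, one may read $j^*R^if_*\cF$ as the stalk $(R^if_*\cF)_{\bar x}$ with its Galois action for a geometric point $\bar x$ over $x$, and invoke proper base change in its stalkwise form, noting $(Y_x)_{\overline{k(x)}} = Y_{\bar x}$.)

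The statement is therefore essentially formal once \autoref{thm:RH} is available, the single substantive input being the classical proper base change theorem. The only genuine point of care will be bookkeeping: one must make sure that the operations $\otimes k(x)$ and $(-)_{Y_x}$ occurring in the statement are literally the crystal-theoretic pullback functors for which \autoref{thm:RH} guarantees compatibility with $\Sol$ — in particular that pullback of $F$-crystals along the (possibly non-flat) morphisms $i$ and $j$ is exact and behaves as expected, which is provided by the Böckle--Pink formalism — and that passing to $i$-th cohomology commutes with $\Sol$, which holds because $\Sol$ is an exact equivalence. No nilpotence arguments (e.g.\ via \autoref{lem: vanishing fibers}) are needed.
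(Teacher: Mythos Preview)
Your argument is correct. The paper itself does not give an argument at all: it simply cites \cite{Bockle_Pink_Cohomological_Theory_of_crystals_over_function_fields}*{Theorem 6.7.6} applied to the inclusion $\Spec k(x)\to X$. You instead re-derive the statement from the Riemann--Hilbert equivalence (\autoref{thm:RH}) together with the classical proper base change theorem for constructible $\bF_p$-sheaves, which is a perfectly valid alternative route given how the paper has packaged the inputs. The only caveat worth recording is one of logical order in the original reference: in B\"ockle--Pink, proper base change for $F$-crystals is established in Chapter~6, prior to the full RH correspondence of Chapter~10, and is in fact one of the ingredients used there. So within that book your argument would be somewhat backwards; but within the present paper, where \autoref{thm:RH} (including compatibility with pullbacks and derived proper pushforwards) is taken as a black box, your deduction is clean and self-contained. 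Your bookkeeping remarks about exactness of crystal pullback along the possibly non-flat maps $i$ and $j$ are well placed, and are indeed covered by the B\"ockle--Pink formalism.
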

\begin{proof}
    This is \cite{Bockle_Pink_Cohomological_Theory_of_crystals_over_function_fields}*{Theorem 6.7.6} applied to the inclusion $\Spec k(x) \to X$.
\end{proof}

\subsubsection{Cartier modules}

\begin{definition} \label{def: Cart_mod}
Let $X$ be a Noetherian $\bF_p$-scheme, and let $\mathcal{N}$ be an $\mathcal{O}_X$-module.
A \emph{Cartier module structure} on $\mathcal{N}$ is an $\mathcal{O}_X$-module homomorphism $\kappa \colon F_* \mathcal{N} \to \mathcal{N}$.
We say that $(\mathcal{N}, \kappa)$ is a \emph{Cartier module}. 
If $\mathcal{N}$ is quasi-coherent (resp.~coherent), we say that $(\mathcal{N}, \kappa)$ is a quasi-coherent (resp.~coherent) Cartier module.
\end{definition}

Quasi-coherent (resp.~coherent) Cartier modules form a Grothendieck category (resp.~an abelian category) by \cite{Bau23}*{Corollary 3.1.3}, denoted by $\QCoh_X^{C}$ (resp $\Coh_X^C$).
The main sources of examples of Cartier modules come from duality theory.

\begin{example}
Let $X$ be a normal variety over an $F$-finite field $k$.
Let $\mathcal{O}_{X^{\reg}} \to F_*\cO_{X^{\reg}}$ be the Frobenius morphism and we apply the functor $\mathcal{H}om(-, \omega_{X^{\reg}}).$
Applying Grothendieck duality, we obtain \[\mathcal{H}om(F_*\mathcal{O}_{X^{\reg}}, \omega_{X^{\reg}}) \cong F_*\mathcal{H}om(\mathcal{O}_{X^{\reg}}, \omega_{X^{\reg}}) \cong F_*\omega_{X^{\reg}}.\]
As $X$ is normal and the sheaves are all reflexive, we conclude $\mathcal{H}om(F_*\mathcal{O}_{X}, \omega_{X}) \cong F_*\omega_X$ and thus we have the Frobenius trace map:
$$C \colon F_*\omega_X \to \omega_X, $$
which endows $\omega_X$ with a natural structure of Cartier module.

\end{example}

Given a Cartier module $(\mathcal{N}, \kappa)$ we define the composite 

$$ \begin{tikzcd}
\kappa^n\colon F^n_*\cN \arrow[r, "F^{n - 1}_*\kappa"] & F^{n -1}_*\cN \arrow[r, "F^{n - 2}_*\kappa"] & \dots \arrow[r, "\kappa"] & \cN.
\end{tikzcd} $$

\begin{definition} \label{def: Cartier crystal}
We say that a Cartier module $(\mathcal{N}, \kappa)$ is \emph{nilpotent} if there exists $n>0$ such that $\kappa^n=0$, and that it is \emph{locally nilpotent} if it is a union of nilpotent Cartier submodules.
\end{definition}

Similarly to Frobenius modules, locally nilpotent Cartier modules form a Serre subcategory \cite{Bau23}*{Lemma 3.3.3}.
The category $\Crys_X^C$ of Cartier crystals is the Serre quotient category of $\Coh_X^C$ by $\Nil_X^C$. \\

In \cite{Bau23}, the first author develops a duality theory between Frobenius crystals and Cartier crystals, which we now recall. We refer to \stacksproj{0A9Y} for the definition of upper-shriek functors.

\begin{definition}
    A \emph{unit dualising complex} on a Noetherian $F$-finite scheme $X$ is the datum of a dualising complex $\omega_X^{\bullet}$, together with an isomorphism $\omega_X^{\bullet} \to F^!\omega_X^{\bullet}$. This agrees with \cite{Bau23}*{Definition 4.2.1} by Propositions 5.1.1 and 5.1.3 in \emph{loc.~cit.}
\end{definition}

By \cite{Bau23}*{Corollary 5.1.15}, unit dualising complexes always exist for schemes of finite type over a Noetherian $F$-finite affine scheme.

\begin{theorem}[\cite{Bau23}*{Theorem 4.3.5 and Corollary 5.1.7}]\label{thm: duality}
    Let $X$ be a Noetherian $F$-finite scheme with a unit dualising complex $\omega_X^{\bullet}$. Then the usual duality functor $D_X(-) \coloneqq \RHHom(-, \omega_X^{\bullet})$ (see \stacksproj{0A89}) can be enriched to an equivalence of categories \[ D_X \colon D^b(\Crys_X^C)^{op} \xrightarrow{\cong} D^b(\Crys_X^F). \]
    
    Moreover, for any proper morphism $f \colon Y \to X$ from a Noetherian $F$-finite scheme $Y$, if we set $\omega_Y^{\bullet} = f^!\omega_X^{\bullet}$, then we have a natural isomorphism
        \[ D_X \circ Rf_* \cong Rf_* \circ D_Y. \] 
\end{theorem}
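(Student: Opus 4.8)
The plan is to deduce the statement from ordinary Grothendieck--Serre duality by dragging the Frobenius structure along. Recall that for a Noetherian scheme $X$ with a dualizing complex $\omega_X^\bullet$, the functor $D_X(-) = \mathbf{R}\HHom(-, \omega_X^\bullet)$ restricts to an anti-equivalence of $D^b_{\mathrm{coh}}(\cO_X)$ with itself with $D_X \circ D_X \cong \mathrm{id}$. The first step is to enrich this at the level of coherent modules: given a coherent Cartier module $(\cN, \kappa)$ with $\kappa \colon F_*\cN \to \cN$, duality for the finite morphism $F$ provides a natural isomorphism $\mathbf{R}\HHom(F_*\cN, \omega_X^\bullet) \cong F_*\mathbf{R}\HHom(\cN, F^!\omega_X^\bullet)$, and the chosen unit structure $\omega_X^\bullet \xrightarrow{\sim} F^!\omega_X^\bullet$ identifies the target with $F_*D_X(\cN)$. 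Thus $D_X(F_*\cN) \cong F_*D_X(\cN)$, and applying $D_X$ to $\kappa$ produces an $F$-module structure $D_X(\cN) \to F_*D_X(\cN)$. I would check this is functorial, extends to an exact functor of triangulated categories $D^b(\Coh_X^C)^{op} \to D^b(\Coh_X^F)$ (using that $D_X$ is a derived functor), and is a quasi-inverse to the analogous construction in the reverse direction, the latter following from $D_X \circ D_X \cong \mathrm{id}$ together with the compatibility of the unit structure under the double application of $F^!$.

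Next I would descend to crystals. The key point is that the equivalence just built sends (locally) nilpotent objects to (locally) nilpotent objects in both directions: if $\kappa^n = 0$, applying the anti-equivalence $D_X$ to the composite defining $\kappa^n$ forces the $n$-th iterate of the dual $F$-structure to vanish, and symmetrically. Hence $D_X$ passes to the Verdier quotients of $D^b(\Coh_X^C)$ and $D^b(\Coh_X^F)$ by $\Nil_X^C$ and $\Nil_X^F$, respectively, yielding the desired equivalence $D^b(\Crys_X^C)^{op} \xrightarrow{\cong} D^b(\Crys_X^F)$. Here one needs the structural input recalled in \autoref{ss: F-Cartier-mod}, in particular that $\Nil_X^C$ is a Serre subcategory and that $D^b(\Crys_X^C)$ is genuinely computed by the corresponding Verdier localization, so that the descended functor is still an equivalence.

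For the compatibility with $Rf_*$, given a proper morphism $f \colon Y \to X$ and $\omega_Y^\bullet \coloneqq f^!\omega_X^\bullet$, I would start from the Grothendieck duality isomorphism $Rf_*\mathbf{R}\HHom_Y(-, \omega_Y^\bullet) \cong \mathbf{R}\HHom_X(Rf_*(-), \omega_X^\bullet)$, i.e.\ $D_X \circ Rf_* \cong Rf_* \circ D_Y$ on underlying complexes, and then check that this isomorphism intertwines the induced $F$- and Cartier structures. This reduces to the compatibility of the base-change isomorphism for the upper-shriek functor along the commutative square relating $F_Y$, $F_X$ and $f$ with the various trace maps; in particular this ensures that the unit structure on $\omega_Y^\bullet$ is exactly the one inherited from the unit structure on $\omega_X^\bullet$, which is what makes the intertwining work.

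The conceptual core is a single observation: the Grothendieck dual of $F_*\cN$ is canonically $F_*D_X(\cN)$, thanks to $F^!$ and the unit structure. Everything else is bookkeeping, and I expect the main effort to lie in two places: organizing the numerous trace-map and base-change compatibilities so that the construction is well defined and genuinely quasi-inverse at the level of \emph{derived} categories (in the first and third steps), and the comparison in the second step between the derived category of the abelian category of crystals and the Verdier localization through which the functor is defined. The latter, together with the bookkeeping in the $Rf_*$-compatibility, is where I would expect the proof to be most delicate.
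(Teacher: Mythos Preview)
The paper does not prove this theorem: it is quoted verbatim from \cite{Bau23}*{Theorem 4.3.5 and Corollary 5.1.7} and used as a black box, so there is no proof in the paper to compare your proposal against.

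That said, your outline is a faithful description of the strategy one expects (and, to the extent one can tell from the statement and the surrounding references, of what \cite{Bau23} does). The three ingredients you isolate are exactly the right ones: (i) use finite duality for $F$ together with the unit isomorphism $\omega_X^\bullet \simeq F^!\omega_X^\bullet$ to turn a Cartier structure $F_*\cN \to \cN$ into an $F$-structure on $D_X(\cN)$, and conversely; (ii) observe that nilpotence is preserved, so the equivalence $D^b(\Coh_X^C)^{op} \simeq D^b(\Coh_X^F)$ descends to crystals; (iii) identify the compatibility $D_X \circ Rf_* \cong Rf_* \circ D_Y$ with ordinary Grothendieck duality, checked to respect the Frobenius data via the base-change compatibilities for $F^!$ and $f^!$.

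You have also correctly flagged the genuinely delicate point: the identification of $D^b(\Crys_X^C)$ (resp.\ $D^b(\Crys_X^F)$) with the Verdier quotient of $D^b(\Coh_X^C)$ (resp.\ $D^b(\Coh_X^F)$) by the thick subcategory generated by nilpotents. This is not formal---one needs enough control over the localization (e.g.\ that the quotient functor $\Coh_X^C \to \Crys_X^C$ induces the expected functor on bounded derived categories)---and is where most of the technical content of \cite{Bau23} sits. Your sketch is an accurate roadmap, but a complete proof requires carrying out precisely this comparison, together with the coherence checks for the unit structure under $f^!$; for those details one must consult the cited reference.
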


\subsection{Notions from the Minimal Model Program}

Let $X$ be a normal scheme, and let $\Delta$ be an effective $\bQ$-divisor with coefficients $\leq 1$ such that $K_X + \Delta$ is $\bQ$-Cartier. We recall the definition of singularities for the pair $(X, \Delta)$ according to the MMP.
Given a proper birational morphism $f \colon Y \to X$ with $Y$ normal, we choose a canonical divisor $K_Y$ such that $f_*K_Y=K_X$, and we write
$$ K_Y + f_*^{-1}\Delta = f^*(K_X+\Delta)+\sum_i a(E_i, X, \Delta) E_i, $$
where $E_i$ run through the prime exceptional divisors and $f_*^{-1}\Delta $ is the strict transform of $\Delta$. 
The coefficient $a(E, X, \Delta)$ is called the \emph{discrepancy} of $E$ (with respect to $(X,\Delta)$), and it depends exclusively on the divisorial valutation associated to $E$.

\begin{definition}
    We say that $(X, \Delta)$ is klt if $\lfloor \Delta \rfloor =0$ and $a(E, X, \Delta) >-1$. We say that $X$ is \emph{of klt type} if there exists $\Delta \geq 0$ such that $(X,\Delta)$ is klt.
    
    We say $(X, \Delta)$ is log canonical if $a(E, X, \Delta) \geq -1$.
\end{definition}

For inductive reasons in our proofs, the class of dlt singularities appear naturally when running an MMP for snc pairs with a reduced boundary.
Given a log canonical pair $(X, \Delta)$, we say that a divisor $E$ on a birational model of $X$ is a \emph{log canonical place} if it is a divisorial valuation of discrepancy -1.

\begin{definition}
    We say that $(X, \Delta)$ is \emph{dlt} if $(X, \Delta)$ is log canonical, and for every log canonical place $E$, we have that the generic point of $\cent_X(E)$ is contained in the snc locus of $(X, \Delta)$.
\end{definition} 

For a discussion of klt and log canonical singularities including the case of positive characteristic, we refer to \cite{kk-singbook} and \cite{7authors}.
We now recall the notion of Fano type morphism, which generalises the notion of Fano variety to pairs in the relative setting.

\begin{definition}
    Let $f \colon X \to Y$ be a projective morphism of varieties.
    We say $f$ is a \emph{morphism of Fano type} if there exists $\Delta \geq 0$ such that $(X, \Delta)$ is klt and $-(K_X+\Delta)$ is $f$-ample.
\end{definition}

\subsection{Non-$\bQ$-factorial 4-fold MMP}

In this section we explain how to adapt the arguments in \cite{HW22, HW20} to prove the existence of a birational MMP over non-$\mathbb{Q}$-factorial bases as in \cite{k-notQfact} for 4-folds, assuming the existence of log resolutions. 

\begin{hypothesis}
\label{hyp}
Following \cites{HW20}, we assume that, given an integral 4-fold $X$, log resolutions of all log pairs with the underlying variety being birational to $X$ exist and are given by a sequence of blowups along the non-snc locus.
\end{hypothesis}

The following is a generalisation of \cite{HW20}*{Theorem 1.1} to non-$\mathbb{Q}$-factorial bases.

\begin{theorem} \label{thm: kollar-notqfactorial-4folds}
    Let $X$ be a 4-fold satisfying \autoref{hyp} defined over a perfect field $k$ of characteristic $p>5$.
    Let $\pi \colon Y \to X$ be a projective birational morphism with exceptional divisor $E=\sum E_i$.
    Suppose there exist $\mathbb{R}$-divisors $\Delta$ and $H$ on $Y$ such that
    \begin{enumerate}
        \item[(i)] \label{non_Q_fact i} $(Y, \Delta)$ is dlt, $\lfloor{ \Delta \rfloor} = \Supp(E)$ and each $E_i$ is $\bQ$-Cartier;
        \item[(ii)] $K_Y +\Delta \equiv_{\pi} \sum e_i E_i $;
        \item[(iii)] $H = \sum h_i E_i$, where $-H$ is effective and $\Supp H = E$. 
        \item[(iv)] $K_Y +\Delta+r_X H$ is $\pi$-ample for some $r_X>0$.
        \item[(v)] The $h_i$ are linearly independent over $\mathbb{Q}(e_1 , \dots , e_n )$.
    \end{enumerate}

Then we can run the $(K_Y+\Delta)$-MMP with scaling of $H$ over $X$ as in \cite{k-notQfact}*{Definition 1} which terminates with $\pi_{\min} \colon Y_{\min} \to X$.
Assume that we reached the $j$-th step of this MMP: 
\begin{center}
    \begin{tikzcd}
        Y_j \arrow[rr, dotted] \arrow[dr,"\varphi_j",swap] & & Y_{j+1} \arrow[dl,"\psi_{j}"] \\
             & Z_j. & \\
    \end{tikzcd}
\end{center}
Let $E^j$ be the divisorial part of the exceptional locus of the birational contraction $\pi_j \colon Y_j \to X$ and $\Delta_{Y_j}$ (resp. $H^j$) be the strict transform of $\Delta$ (resp. $H$) on $Y_j$.
Let $$r_j \coloneqq \inf \left\{ t \geq 0 \mid K_{Y_j}+\Delta_{Y_j}+tH^j \text{ is nef}\right\} $$ be the threshold of the MMP with scaling of $H$ at the $j$-th step. 
We have the following properties:
\begin{enumerate}
    \item \label{non_Q_fact a} given two irreducible components $E^j_{l_1}$ and $E^j_{l_2}$ of $E^j$, there exists $\lambda=\lambda(l_1,l_2)$ such that $E^j_{l_1} \equiv \lambda E^j_{l_2}$ in $N^1(Y_j/Z_j)$ (in particular, they are all multiples of $K_{Y_j}+\Delta_{Y_j}$).
    \item \label{non_Q_fact b} $\Ex(\varphi_j) \subset \Supp(E^j)$ and either
    \begin{enumerate}
        \item $\Ex(\varphi_j)$ is an irreducible divisor and $Y_{j+1} = Z_{j}$, or
        \item  $\varphi_j$ is small, and there are irreducible components $E^{j}_{j_1}$ and $E^{j}_{j_2}$ of $E^j$ such that $E^{j}_{j_1}$ and $-E^{j}_{j_2}$ are both $\varphi_{j}$-ample.
    \end{enumerate}
    \item \label{non_Q_fact c} The irreducible components of $E^{j+1}$ are all $\bQ$-Cartier;
    \item \label{non_Q_fact d} $\sum e_i E_i^{j+1} +(r_j - \varepsilon)H^{j+1}$ is a $\pi_{j+1}$-ample $\mathbb{R}$-divisor supported on $\Ex(\pi_{j+1})$ for $0 < \varepsilon \ll 1$. 
    \end{enumerate}
Furthermore, 
if $\sum e_iE_i$ is effective and $\Supp (\sum e_iE_i) =E$, then $Y_{\min}=X$.
\end{theorem}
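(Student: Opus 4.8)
The plan is a two-step one: first show that the additional hypothesis forces the strict transform $\sum_i e_iE_i^{\min}$ of $\sum_i e_iE_i$ to vanish on $Y_{\min}$, so that $\pi_{\min}$ is a small morphism; then use the relative ampleness built into the construction to conclude that $\pi_{\min}$ is in fact an isomorphism.

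For the first step, I would begin by recalling that each step of the MMP is either a divisorial contraction or a flip, and hence an isomorphism in codimension one, so the decomposition (ii) is preserved throughout the process; writing $E^{\min}=\sum_iE_i^{\min}$ for the divisorial part of $\Ex(\pi_{\min})$, whose components are precisely the strict transforms of the $E_i$ that are not contracted, one gets $K_{Y_{\min}}+\Delta_{Y_{\min}}\equiv_{\pi_{\min}}\sum_i e_iE_i^{\min}$ — this is the same book-keeping as in \cite{HW22} and \cite{k-notQfact}. Since the MMP terminates at $\pi_{\min}$, the divisor $K_{Y_{\min}}+\Delta_{Y_{\min}}$ is nef over $X$, hence so is $D\coloneqq\sum_i e_iE_i^{\min}$; moreover $D$ is $\pi_{\min}$-exceptional (each $E_i^{\min}$ has the same image $\overline{\pi(E_i)}$ in $X$, of codimension $\geq 2$), it is $\mathbb{R}$-Cartier because its components are $\mathbb{Q}$-Cartier, and — here is the only use of the additional hypothesis — it is \emph{effective}, as the strict transform of the effective divisor $\sum_i e_iE_i$. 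By the negativity lemma (e.g.\ \cite{km-book}*{Lemma 3.39}), an effective, $\pi_{\min}$-exceptional, $\mathbb{R}$-Cartier divisor that is nef over $X$ must vanish, so $D=0$.

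For the second step: since $\Supp(\sum_i e_iE_i)=E$ and $\sum_i e_iE_i$ is effective, every $e_i>0$, so $D=0$ means that no $E_i$ survives, i.e.\ $E^{\min}=0$ and $\pi_{\min}$ is small. Then $\Ex(\pi_{\min})$ has codimension $\geq 2$ in $Y_{\min}$, so the only $\mathbb{R}$-divisor supported on it is $0$; applying the relative ampleness statement at the last step of the MMP, the $\pi_{\min}$-ample divisor $\sum_i e_iE_i^{\min}+(r-\varepsilon)H^{\min}$ (with $r>0$ the threshold of that step) is therefore $0$, i.e.\ $\mathcal{O}_{Y_{\min}}$ is $\pi_{\min}$-ample. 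A proper morphism with $\mathcal{O}$ relatively ample is finite, and a finite birational morphism onto the normal variety $X$ is an isomorphism, so $Y_{\min}=X$. (If the MMP has no steps, then $Y_{\min}=Y$: the negativity lemma applied directly on $Y$, using that each $E_i$ is $\mathbb{Q}$-Cartier, gives $\sum_i e_iE_i=0$, hence $E=\emptyset$, so $H=0$ and $K_Y+\Delta\equiv_\pi 0$, and then hypothesis (iv) provides a $\pi$-ample divisor $K_Y+\Delta+r_XH$ numerically trivial over $X$, forcing $\pi$ to be finite and hence an isomorphism.)

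I expect the only genuinely non-formal point to be the first one — maintaining the decomposition (ii), and the identification of $E^{\min}$ with the uncontracted strict transforms of the $E_i$, through all the divisorial contractions and flips of the MMP. This is the step that will need the most care to phrase precisely, but it runs exactly parallel to the analogous book-keeping in \cite{HW22}; once it is granted, the rest is a direct application of the negativity lemma and of the relative ampleness property.
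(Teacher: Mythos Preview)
Your proposal addresses only the final ``Furthermore'' clause and takes the body of the theorem --- that the MMP with scaling can be run, terminates, and satisfies properties (a)--(d) at each step --- as already established. For that clause your argument is correct and is the standard one (indeed, it is how \cite{k-notQfact} proceeds): since $K_{Y_{\min}}+\Delta_{Y_{\min}}\equiv_{\pi_{\min}}\sum e_iE_i^{\min}$ is $\pi_{\min}$-nef, effective, exceptional and $\mathbb{R}$-Cartier, the negativity lemma forces it to vanish; as every $e_i>0$ this gives $E^{\min}=0$, and then property (d) at the last step produces a $\pi_{\min}$-ample divisor which is zero (being a divisor supported on a set of codimension $\geq 2$), so $\pi_{\min}$ is finite, hence an isomorphism.

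The paper's proof, however, is devoted almost entirely to the part you omit: it verifies that the MMP machinery works for 4-folds in characteristic $p>5$ under \autoref{hyp}. Concretely, it checks the cone theorem and existence of birational contractions by adapting the arguments of \cite{HW-MMp-5}*{Theorem 4.6 and Proposition 4.5}; it constructs the required flips by using the $\mathbb{Q}$-Cartier-ness of the $E_i^j$ from hypothesis (i) to reduce to a boundary of the form $E_{j_1}+E_{j_2}+\Gamma$ with $\lfloor\Gamma\rfloor=0$ and standard coefficients, where \cite{HW22}*{Theorem 4.3} applies (and handles non-standard coefficients via \cite{HW22}*{Theorem 4.1}); and it obtains termination from special termination \cite{XX22}*{Theorem 3.4}, replacing the $\mathbb{Q}$-factoriality of $Y$ assumed there by hypothesis (i). Properties (a)--(d) and the ``Furthermore'' clause are then deferred to \cite{k-notQfact}*{Theorem 2} and \cite{7authors}. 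So as a proof of the full statement, your proposal is missing the substantive content --- that one can actually run and terminate this MMP in dimension 4 over a perfect field of characteristic $p>5$ --- which is precisely what the paper's proof supplies.
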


\begin{proof}
    The proof of \cite{k-notQfact}*{Theorem 2} for \autoref{non_Q_fact b}, \autoref{non_Q_fact c} and \autoref{non_Q_fact d} (see also \cite{7authors}*{Theorem 9.15}) works for 4-folds, as soon as we prove the existence of the desired contractions and flips. 
    The explanation for \autoref{non_Q_fact a} can be found at lines 5-8 of \cite{7authors} on page 195.
    
    In our setting, the cone theorem is valid by the same proof of \cite{HW20}*{Theorem 4.6}. 
    Pick an extremal face $F$ as in the proof of \cite{k-notQfact}*{Theorem 1.1}.
    The same proof as in \cite{HW20}*{Proposition 4.5} shows that there exists a birational contraction which contracts exactly the face $F$. 
    Thus we are left to construct the desired flips.
    We suppose first that $\Delta$ has standard coefficients.
    In this case, using the $\bQ$-factoriality of the irreducible prime divisor in $\Ex(\pi_j)$, we can reduce to the case where $\Delta=E_{j_1} +E_{j_2} +\Gamma$ such that $\lfloor \Gamma \rfloor=0$ and $\Gamma$ has standard coefficients. 
    Then the required flip exists by the same argument as in \cite{HW20}*{Theorem 4.3}.
    If $\Delta$ does not have standard coefficients, we use \autoref{hyp} to reduce to this case as done in \cite{HW20}*{Theorem 4.1}.

    As for the termination of the sequence of flips, we have that the exceptional locus of each step of the MMP is contained in $E^j$ and thus the MMP must terminate by special termination \cite{XX22}*{Theorem 3.4} (in their proof one can replace the $\mathbb{Q}$-factoriality of $Y$ with Hypothesis (i)).
\end{proof}

\begin{corollary} \label{cor: 4-fold-klt-base}
    Let $(X, \Delta)$ be a 4-fold pair satisfying \autoref{hyp} over a perfect field $k$ of characteristic $p>5$.
    Let $\pi \colon Y \to X$ be a log resolution, whose exceptional locus $E$ supports a divisor that is ample over $X$.
    If $(X,\Delta)$ is klt, then we can run a $(K_Y+\pi_*^{-1}\Delta+E)$-MMP in the sense of \autoref{thm: kollar-notqfactorial-4folds} which terminates with $X$.
\end{corollary}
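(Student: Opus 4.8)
The strategy is to apply \autoref{thm: kollar-notqfactorial-4folds} with boundary $\Delta_Y \coloneqq \pi_*^{-1}\Delta + E$ on $Y$ and a suitable scaling divisor $H$, and then to read off the conclusion from the last sentence of that theorem. First I would record the numerical input. Since $\pi$ is a log resolution of $(X,\Delta)$, the $\R$-divisor $\pi_*^{-1}\Delta + E$ is snc on the regular variety $Y$, so $(Y,\Delta_Y)$ is log smooth, hence dlt; moreover $\lfloor \Delta_Y \rfloor = \Supp E$ because $\lfloor \Delta \rfloor = 0$ (as $(X,\Delta)$ is klt) and $E$ shares no component with $\pi_*^{-1}\Delta$, and every $E_i$ is Cartier as $Y$ is regular. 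This gives hypothesis (i). Adding $E = \sum_i E_i$ to the discrepancy identity $K_Y + \pi_*^{-1}\Delta = \pi^*(K_X+\Delta) + \sum_i a(E_i,X,\Delta)\,E_i$ yields
\[ K_Y + \Delta_Y \;=\; \pi^*(K_X+\Delta) + \sum_i e_i E_i, \qquad e_i \coloneqq a(E_i,X,\Delta)+1 , \]
so $K_Y + \Delta_Y \equiv_{\pi} \sum_i e_i E_i$, which is hypothesis (ii). As $(X,\Delta)$ is klt we have $e_i > 0$ for all $i$, so $\sum_i e_i E_i$ is effective with support exactly $E$; this is precisely the extra condition appearing in the last sentence of \autoref{thm: kollar-notqfactorial-4folds}. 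Finally, the $e_i$ are rational, so $\Q(e_1,\dots,e_n) = \Q$.

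The remaining work is to construct $H$. By hypothesis there is a $\pi$-ample $\R$-divisor $A$ with $\Supp A \subseteq E$ (so $\pi$ is projective). By the negativity lemma, since $A$ is $\pi$-nef and $\pi_* A = 0$, the $\R$-divisor $-A$ is effective. For $0 < \delta \ll 1$ the class of $A - \delta \sum_i E_i$ still lies in the open cone of $\pi$-ample classes in $N^1(Y/X)_{\R}$, while all coefficients of $A - \delta\sum_i E_i$ are strictly negative. Hence the set $\mathcal{C}$ of $\R$-divisors supported on $E$ that are $\pi$-ample and have all coefficients strictly negative is a nonempty open subset of the finite-dimensional space of $\R$-divisors supported on $E$. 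Since the locus of coefficient vectors $(h_1,\dots,h_n)$ which are linearly dependent over the countable field $\Q$ is a countable union of proper linear subspaces, I may pick $H = \sum_i h_i E_i \in \mathcal{C}$ whose coefficients $h_i$ are $\Q$-linearly independent. Then $-H$ is effective with $\Supp H = E$, which is hypothesis (iii), and the $h_i$ are linearly independent over $\Q(e_1,\dots,e_n) = \Q$, which is hypothesis (v). Lastly, since $H$ is $\pi$-ample, $K_Y + \Delta_Y + r_X H = r_X\bigl(H + r_X^{-1}(K_Y+\Delta_Y)\bigr)$ is $\pi$-ample for $r_X \gg 0$, which is hypothesis (iv).

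With (i)--(v) verified, \autoref{thm: kollar-notqfactorial-4folds} produces a $(K_Y+\Delta_Y)$-MMP over $X$ with scaling of $H$ which terminates, and because $\sum_i e_i E_i$ is effective with support $E$ its output is $Y_{\min} = X$; since $K_Y + \Delta_Y = K_Y + \pi_*^{-1}\Delta + E$, this is exactly the assertion of the corollary. The only step that is not pure bookkeeping is the construction of $H$: one must simultaneously keep $H$ relatively ample (so that (iv) survives), make all of its coefficients negative (so that $-H$ is effective with full support $E$, as (iii) demands), and avoid the countably many $\Q$-linear relations forced by (v). The negativity lemma together with an open-dense argument in $N^1(Y/X)_{\R}$ handles all three at once, and this is where the (modest) care is needed.
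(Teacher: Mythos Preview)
Your proof is correct and follows the same approach as the paper: verify hypotheses (i)--(v) of \autoref{thm: kollar-notqfactorial-4folds} and invoke its final sentence using that $e_i = a(E_i,X,\Delta)+1 > 0$ by the klt assumption. The paper's own proof is terser---it simply asserts that a suitable $H$ exists once $E$ supports a relatively ample divisor---whereas you spell out the construction via the negativity lemma, a small perturbation to force strictly negative coefficients, and a genericity argument to achieve the $\bQ$-linear independence required by (v); this is exactly the kind of detail the paper suppresses.
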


\begin{proof}
    We have that 
    $(K_Y+\pi_*^{-1}\Delta+E) \equiv_\pi \sum_i (1+a(E_i,X,\Delta))E_i,$ which is effective by the klt hypothesis.
    As $E$ supports an ample divisor, we can find an $\mathbb{R}$-divisor $H$ satisfying hypotheses (iii), (iv) and (v) of \autoref{thm: kollar-notqfactorial-4folds}, so we conclude. 
\end{proof}

\section{Frobenius--stable versions of CM, rationality and GR vanishing} \label{s-def}

In this section, we define and discuss basic properties of Frobenius--stable versions of Cohen--Macaulay modules, rational singularities and GR vanishing. Apart from examples, we will allow ourselves to work with general Noetherian and $F$--finite schemes, not only varieties.

\subsection{Definitions}

We introduce the analogue of Serre's conditions for Frobenius crystals and Cartier crystals.
Similar notions of depth for local cohomology up to Frobenius action have been investigated in previous works (see for example \cites{HS77, Lyu06}).

\begin{definition}
    Let $X$ be a Noetherian $F$-finite scheme, and let $A \in \{F, C\}$.
    \begin{itemize}
        \item Given $\cM^{\bullet} \in D^b(\Crys_X^A)$, define \[ \Supp_{\crys}(\cM^{\bullet}) := \{x \in X\mid \: \cM^{\bullet}_x \not\sim_A 0 \}. \]
        This subset is closed by \cite{Bau23}*{Lemma 2.2.17}. We also define \[ \dim_{\crys}(\cM^{\bullet}) \coloneqq \dim(\Supp_{\crys}(\cM^{\bullet})).\]
        \item Let $(R, \fm)$ be a Noetherian $F$-finite local ring, and let $0 \not\sim_A M \in \Crys_X^A$. We set \[ \depth_{\crys}(M) \coloneqq \min\{i \geq 0\mid \: H^i_{\fm}(M) \not\sim_A 0\}. \]
        We say that $M$ has \emph{Serre's condition $(S_n)$ up to nilpotence} if \[\depth_{\crys}(M) \geq \min\{n, \dim_{\crys}(M) \}.\] If $M$ has $(S_n)$ up to nilpotence for all $n \geq 0$, then $M$ is said to be \emph{Cohen-Macaulay up to nilpotence} (in short, CM up to nilpotence). We set the zero crystal to be CM up to nilpotence by definition.
        \item Let $\cM \in \Crys_X^A$. We say that $\cM$ has property $(S_n)$ up to nilpotence (resp.~$\cM$ is CM up to nilpotence) if, for all $x \in X$, the localised sheaf $\cM_x$ has this property.
    \end{itemize}
\end{definition}

Thanks to the following lemma, this notion of depth up to nilpotence behaves very similarly to the usual notion of depth.

\begin{lemma}\label{lem: equivalent_def_CM_up_to_nilp}
   Let $(A, B) \in \{(F, C), (C, F)\}$.
   Let $(R, \fm)$ be a Noetherian $F$-finite local ring, fix a normalised unit dualising complex $\omega_R^{\bullet}$, and let $0 \not\sim_A M \in \Crys_R^A$. Then $\depth_{\crys}(M) \leq \dim_{\crys}(M)$, and \[ \Ext^{-i}(M, \omega_R^{\bullet}) \not\sim_B 0 \implies \: i \in \{\depth_{\crys}(M), \dots, \dim_{\crys}(M)\}, \] and $\Ext^{-\delta}(M, \omega_R^{\bullet}) \not\sim_B 0$ for $\delta \in \{\depth_{\crys}(M), \dim_{\crys}(M) \}$.

   In particular, $M$ is CM up to nilpotence if and only if $\mathrm{RHom}(M, \omega_R^{\bullet})$ is supported in a single degree as a $B$-crystal.
\end{lemma}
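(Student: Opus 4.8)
The plan is to prove the ``up to nilpotence'' analogue of the classical relation between depth, dimension and the amplitude of $\RHom(-,\omega_R^\bullet)$: I would combine Grothendieck local duality with Grothendieck's vanishing and non-vanishing theorems, taking care that every morphism in sight respects the $F$- and Cartier structures, and then reduce the bookkeeping to a genuine module via the maximal locally nilpotent submodule. Throughout, fix the normalised unit dualizing complex $\omega_R^\bullet$, and write $E \coloneqq \mathcal{H}^0(\RGamma_\fm(\omega_R^\bullet))$ for the associated injective hull of the residue field, so that $(-)^\vee \coloneqq \Hom_R(-,E)$ denotes Matlis duality.

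\emph{Step 1: a crystalline local duality.} First I would record the biconditional
\[ H^i_\fm(M) \not\sim_A 0 \iff \Ext^{-i}(M,\omega_R^\bullet) \not\sim_B 0 \qquad \text{for all } i. \]
Grothendieck local duality provides a functorial isomorphism $H^i_\fm(M) \cong \big(\Ext^{-i}(M,\omega_R^\bullet)\big)^\vee$ of $R$-modules. Since $\omega_R^\bullet$ is a \emph{unit} dualizing complex, $E$ carries a natural structure interchanging $F$- and Cartier modules, and with respect to it Matlis duality $(-)^\vee$ sends $A$-modules to $B$-modules and (locally) nilpotent ones to (locally) nilpotent ones (this is the affine, local shadow of \autoref{thm: duality}, and is built into the formalism of \cite{Bau23}). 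Hence the displayed isomorphism lifts to an isomorphism of the associated quasi-coherent crystals, which gives the biconditional. Note also that $\RGamma_\fm$ preserves (local) nilpotence, since the relevant \v{C}ech complex is assembled from localisations, so that $H^i_\fm(-)$ descends to crystals.

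\emph{Step 2: reduction to a module, and the three assertions.} Let $M_{\mathrm{nil}} \subseteq M$ be the maximal locally nilpotent $A$-submodule; it exists and is coherent, as $M$ is coherent and locally nilpotent submodules form a Serre subcategory (\cite{Bau23}). Put $\bar M \coloneqq M/M_{\mathrm{nil}}$. Then $M \to \bar M$ is an isomorphism of crystals, so $H^i_\fm(M) \sim_A H^i_\fm(\bar M)$ and $\Ext^{-i}(M,\omega_R^\bullet) \sim_B \Ext^{-i}(\bar M,\omega_R^\bullet)$; moreover $\bar M$ has no nonzero locally nilpotent submodule, and one checks that $\Supp_R(\bar M) = \Supp_{\crys}(M)$, so $\dim \bar M = \dim_{\crys}(M) \eqqcolon d'$. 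Now Grothendieck vanishing gives $H^i_\fm(\bar M) = 0$ for $i > d'$, hence $H^i_\fm(M) \sim_A 0$, hence $\Ext^{-i}(M,\omega_R^\bullet) \sim_B 0$, for such $i$; and for $0 \leq i < \depth_{\crys}(M)$ one has $H^i_\fm(M) \sim_A 0$ by definition of $\depth_{\crys}$, so again $\Ext^{-i}(M,\omega_R^\bullet) \sim_B 0$. Non-vanishing at $i = \depth_{\crys}(M)$ is the definition of $\depth_{\crys}$ together with Step 1. For non-vanishing at $i = d'$, choose $\fq \in \Supp_R(\bar M)$ minimal with $\dim R/\fq = d'$; then $\bar M_\fq$ has finite length over $R_\fq$ and $\bar M_\fq \not\sim_A 0$ (as $\fq \in \Supp_{\crys}(\bar M)$). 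Localising at $\fq$ and using that $(\omega_R^\bullet)_\fq \cong \omega_{R_\fq}^\bullet[d']$ and that $\RHom$ against a normalised dualizing complex is concentrated in degree $0$ on finite-length modules, we get $\Ext^{-d'}(M,\omega_R^\bullet)_\fq \cong \bar M_\fq^\vee \cong D_{R_\fq}(\bar M_\fq) \not\sim_B 0$ by the equivalence in \autoref{thm: duality}; hence $\Ext^{-d'}(M,\omega_R^\bullet) \not\sim_B 0$, and $H^{d'}_\fm(M) \not\sim_A 0$ by Step 1. Combining this with $\depth_{\crys}(M) = \min\{i : H^i_\fm(M) \not\sim_A 0\}$ yields all three displayed assertions, in particular $\depth_{\crys}(M) \leq d' = \dim_{\crys}(M)$.

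\emph{The final equivalence, and the main obstacle.} Since Serre's condition $(S_n)$ up to nilpotence for $n = \dim_{\crys}(M)$ reads $\depth_{\crys}(M) \geq \dim_{\crys}(M)$, and this case automatically implies all larger $n$ while all smaller $n$ are then automatic, $M$ is CM up to nilpotence iff $\depth_{\crys}(M) \geq \dim_{\crys}(M)$, i.e. --- by the inequality just proved --- iff $\depth_{\crys}(M) = \dim_{\crys}(M)$. By the second and third assertions this happens exactly when $\{i : \Ext^{-i}(M,\omega_R^\bullet) \not\sim_B 0\}$ is a single point, i.e. when $\RHom(M,\omega_R^\bullet) = D_R(M)$ is supported in a single cohomological degree as a $B$-crystal. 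I expect the only genuinely non-formal ingredient to be Step 1: one must know that Grothendieck local duality is compatible with the $F$/Cartier structures and that Matlis duality interchanges $A$- and $B$-(local) nilpotence --- the local counterpart of the global duality \autoref{thm: duality}. Everything else is the classical (non-)vanishing theory transported across the isomorphism of crystals $M \cong \bar M$.
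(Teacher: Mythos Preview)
Your argument is correct and tracks the paper's proof closely: both hinge on the crystalline local duality of \cite{Bau23}*{Proposition 4.4.5} (your Step~1), reduce to an honest coherent module whose ordinary support equals $\Supp_{\crys}(M)$, invoke Grothendieck vanishing for the upper bound, and localise at a generic point of a top-dimensional component of the support for the non-vanishing at $\dim_{\crys}(M)$.

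The one technical difference is in how the reduction is carried out. The paper appeals to the Kashiwara-type equivalence \cite{Bau23}*{Lemma 2.2.17 and Corollary 5.3.5} to write $M \sim_A i_*N$ for some $N$ living on $Z = \Supp_{\crys}(M)$, and then uses \stacksproj{0A7U} on $N$. You instead pass to $\bar M = M/M_{\mathrm{nil}}$ and use that $M_{\mathrm{nil}}$ is the stable kernel $\ker(\tau^N)$ (resp.\ $\ker(\kappa^N)$) of the structural map for $N \gg 0$, so that localisation commutes with forming it and hence $\Supp_R(\bar M) = \Supp_{\crys}(M)$. Your device is a touch more elementary since it avoids the support/pushforward correspondence for crystals; the paper's route, having already passed to $Z$, makes the shift bookkeeping at the generic point marginally cleaner. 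Either way the substance is the same.
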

\begin{proof}
    By \cite{Bau23}*{Proposition 4.4.5}, we have a local duality theorem for crystals: for all $i \in \bZ$, \[ \Ext^{-i}(M, \omega_R^{\bullet}) \sim_B 0 \iff H^i_{\fm}(M) \sim_A 0. \] Throughout the proof, we will refer to this result as local duality.
    
    Let $Z \coloneqq \Supp_{\crys}(M)$, let $i \colon Z \to \Spec R$ denote the corresponding closed immersion, and write $Z = \Spec S$. By \cite{Bau23}*{Lemma 2.2.17 and Corollary 5.3.5}, there exists $N \in \Coh_Z^A$ such that $i_*N \sim_A M$. In particular, for all $i < -\dim(Z)$, \[ \Ext^i(M, \omega_R^{\bullet}) \sim_B \Ext^i(i_*N, \omega_R^{\bullet}) \sim_B 0 \] by \cite{stacks-project}*{Tag \href{https://stacks.math.columbia.edu/tag/0A7U}{0A7U}}. We then obtain from local duality that if $\Ext^{-i}(M, \omega_R^{\bullet}) \not\sim_B 0$, then $i \leq \dim_{\crys}(M)$. The other bound is immediate from the definition (and again local duality). Let us now show that $\Ext^{-j}(M, \omega_R^{\bullet}) \not\sim_B 0$ for $j = \dim(Z)$ (in particular, this implies that $\depth_{\crys}(M) \leq \dim_{\crys}(M)$ by local duality). Let $\eta$ be a generic point of some irreducible component of $Z$ (in particular, $N_{\eta} \not\sim_A 0$). Note that \[ \Ext_R^{-j}(M, \omega_R^{\bullet})_{\eta} \sim_B \cHH^{-j}\left(\RHom_R(i_*N, \omega_R^{\bullet})\right)_{\eta} \expl{\sim_{B}}{\autoref{thm: duality}} \cHH^{-j}\left(\RHom_S(N, \omega_S^{\bullet})\right)_{\eta} \sim_B \Ext^{-j}_{k(\eta)}(N_{\eta}, (\omega_S^{\bullet})_{\eta}).  \] Since $(\omega_S^{\bullet})_{\eta} = \omega_{k(\eta)}^{\bullet}[j]$ , we conclude that \[ \Ext_R^{-j}(M, \omega_R^{\bullet})_{\eta} \sim_B \Hom(N_{\eta}, \omega_{k(\eta)}^{\bullet}). \] Since $N_{\eta} \not\sim_A 0$, also $\Hom(N_{\eta}, \omega_{k(\eta)}^{\bullet}) \not\sim_B 0$ ($k(\eta)$ is a field) so the proof that $\Ext^{-j}(M, \omega_X^{\bullet}) \not\sim_A 0$ is complete. 
    
    Finally, the non-vanishing for $j = \depth_{\crys}(M)$ is immediate from the definition and local duality.
\end{proof}

\begin{definition}
    Let $X$ be a Noetherian $F$-finite $\mathbb{F}_p$-scheme. We say that $X$ has \emph{property $(S_n)$ up to nilpotence} (resp.~$X$ is \emph{Cohen--Macaulay (CM) up to nilpotence}) if the Frobenius crystal $\cO_X$ has property $(S_n)$ up to nilpotence (resp.~is CM up to nilpotence).
\end{definition}

We define rational and strongly rational singularities up to Frobenius nilpotence by enlarging the definition of rational singularities to the category of crystals.

\begin{definition} \label{def: nilp_rat_sing}
Let $f \colon Y \to X$ be a projective morphism of Noetherian $F$-finite schemes. 
We say the morphism $f$ is $\mathbb{F}_p$\emph{-rational} if $\cO_X \to Rf_*\cO_Y$ is an isomorphism in $D^b(\Crys_X^F)$.

We say that an integral $\bF_p$-scheme $X$ has \emph{$\mathbb{F}_p$-rational singularities} if there exists an $\mathbb{F}_p$-rational projective resolution $f \colon Y \to X$ of singularities.
We say that $X$ has \emph{strongly $\mathbb{F}_p$-rational singularities} if $X$ has $\bF_p$-rational singularities, and is CM up to nilpotence.
\end{definition}

\begin{remark}\label{rem: enough to check vanishing of the exceptional}
We have the following properties.
\begin{enumerate}
    \item By \autoref{ex: RH}, we have the following equivalences 
    \begin{align*}
    &f_*\mathcal{O}_Y \sim_F \mathcal{O}_X \iff f_*\mathbb{F}_{p,Y} \cong \mathbb{F}_{p,X},\,\,\text{and}\\
    &R^if_*\cO_Y \sim_F 0 \iff R^if_*\bF_{p, Y} = 0 \text{ for } i>0,
    \end{align*}
    where $\bF_{p, Y}$ is seen as an étale sheaf on $Y$. 
    This explains the name of $\mathbb{F}_p$-rationality.
     \item If $f \colon Y \to X$ is a resolution, then $X$ has $\bF_p$-rational singularities if and only if $R^if_*\cO_E \sim_F 0$ for $i > 0$, where $E$ denotes the exceptional locus of $f$.
    This is an application of the proper base change theorem:
    For any point $x \in f(E)$, we consider the following diagram
    \begin{center}
        \begin{tikzcd}
            f^{-1}(x) \arrow[r] \arrow[d] & E \ar[dr] \ar[r] &  Y \arrow[d] \\
             x  \arrow[rr] &  & X, \\
        \end{tikzcd}
    \end{center}
    and, applying \autoref{proper_base_change_F_crystals} twice, we deduce that \[R^if_* \cO_Y \otimes k(x) \sim_F H^i(f^{-1}(x), \cO_{f^{-1}(x)}) \sim_F R^if_* \cO_E \otimes k(x).\] 
    We then conclude since, by \autoref{lem: vanishing fibers}, $R^if_*\cO_E \sim_F 0$ if and only if for all $x \in X$, $R^if_*\cO_E \otimes k(x) \sim_F 0$.
    Recall that also for Witt cohomology, showing the vanishing of $R^if_*W\mathcal{O}_{Y, \mathbb{Q}}$ is equivalent to check $R^if_*W\mathcal{O}_{E, \mathbb{Q}}$ vanishes (see \cite[Theorem 2.4]{Berthelot_Bloch_Esnault_On_Witt_vector_cohomology_for_singular_varieties}).

    \item As in the classical case, one can define the following notion. Let $k$ be an $F$-finite field, and let $X$ be a variety over $k$. We say that $X$ has \emph{$\bF_p$--pseudo--rational singularities} if for every normal variety $Y$ with a proper birational morphism $\pi \colon Y \to X$, the natural map $\pi_*\omega_Y \to \omega_X$ is an isomorphism of Cartier crystals. Note that, in contrast to the definition of pseudo-rational singularities, we do not require that $X$ has CM singularities, even up to Frobenius nilpotence.

    As usual, if $X$ has $\bF_p$-rational singularities and resolutions of singularities exist, then it also has $\bF_p$--pseudo--rational singularities as follows:
    Firstly, since we assumed the existence of resolution of singularities, every resolution of $X$ is $\bF_p$-rational.
    Here, we used the fact that every smooth variety has rational singularities \cite{CR11}.
    Next, replacing $Y$ with its resolution, we may assume that $Y$ is smooth because our aim is the surjectivity up to nilpotence of the natural injection $\pi_{*}\omega_Y\hookrightarrow \omega_X$. 
    Since $\pi$ is $\bF_p$-rational, we have $R\pi_*\omega_Y[d]\cong R\pi_*\omega_Y^{\bullet} \sim_C \omega_X^{\bullet}$
    by \autoref{thm: duality}, where $d\coloneqq \dim(Y)=\dim(X)$.
    Taking cohomology sheaves in degree $-d$ concludes the proof.

\end{enumerate}
\end{remark}

We define a Frobenius--stable version of the Grauert--Riemenschneider vanishing theorem for proper birational morphisms. 

\begin{definition} \label{def: GR_nilpotent}
    Let $X$ be a Noetherian and $F$-finite integral scheme with a unit dualising complex $\omega_X^{\bullet}$. We say that $X$ satisfies the \emph{Frobenius--stable GR vanishing} if there exists a resolution $f \colon Y \to X$ such that $R^if_*\omega_Y \sim_C 0$ for all $i>0$, where we set $\omega_Y^{\bullet} \coloneqq f^!\omega_X^{\bullet}$.
\end{definition} 

Throughout, we will implicitly use the following independence result.
\begin{lemma} \label{doest_depend_on_unit_dc}
    The above definition does not depend on the choice of a unit dualising complex on $X$.
\end{lemma}
\begin{proof}
    Let $\omega_{X, 1}^{\bullet}$ and $\omega_{X, 2}^{\bullet}$ denote two unit dualising complexes on $X$. By \stacksproj{0A7F} (whose proof extends to the non--necessarily affine case), we know that as complexes of coherent sheaves, we can write $\omega_{X, 2}^{\bullet} \cong \omega_{X, 1}^{\bullet} \otimes \mathcal{L}[m]$ for some line bundle $\mathcal{L}$ on $X$ and some $m \in \bZ$ (as mere complexes of coherent sheaves). Let $\bD_1$ be the duality functor associated to $\omega_{X, 1}^{\bullet}$ (see \autoref{thm: duality}). Since $\omega_{X, 2}^{\bullet}$ is unit, it follows from the constructions that $\bD_1(\omega_{X, 2}^{\bullet})$ is a unit Frobenius module (see \cite[Definition 2.2.1]{Bau23}). As coherent sheaves, $\bD(\omega_{X, 2}^{\bullet}) \cong \mathcal{L}^{\vee}[-m]$, so $\mathcal{L}^{\vee}$ admits the structure of a unit Frobenius module. Note that its dual does too in consequence as in \cite[Lemma 4.1.7]{Bau23} (the unit part is crucial here), and a similar computation as that of \emph{loc. cit.} shows that $\bD_1(\mathcal{L}^{\vee}[-m]) \cong \omega_{X, 1}^{\bullet} \otimes \mathcal{L}[m]$ as Cartier modules (the tensor product of a Cartier module and a unit Frobenius module is defined as in \cite[Remark 3.1.2.(d)]{Bau23}). Since $\bD_1(\mathcal{L}^{\vee}[-m]) \cong \bD_1(\bD_1(\omega_{X, 2}^{\bullet})) \cong \omega_{X, 2}^{\bullet}$, we deduce that $\omega_{X, 2}^{\bullet} \cong \omega_{X, 1}^{\bullet} \otimes \mathcal{L}[m]$ in $D^b(\Coh_C^X)$. 
    
    Set $\omega_{Y, i}^{\bullet} \coloneqq f^!\omega_{X, i}^{\bullet}$ for $i = 1, 2$. We then obtain by construction of the upper shriek functor that $\omega_{Y, 2}^{\bullet} \cong \omega_{Y, 1}^{\bullet} \otimes f^*\mathcal{L}[m]$, so $\omega_{Y, 2} \cong \omega_{Y, 1} \otimes f^*\mathcal{L}$. The result then follows from the projection formula (this formula holds in this context by duality and the fact that it holds for Frobenius modules, see \cite[Theorem 6.4.9]{Bockle_Pink_Cohomological_Theory_of_crystals_over_function_fields}).
\end{proof}
\begin{remark}\label{doesnt_depend}
    Assuming the existence of resolution of singularities in positive characteristic, it would be sufficient to check $\mathbb{F}_p$-rationality and the Frobenius-stable version of GR vanishing on $X$ starting from one resolution by \cite{CR15} (see also \cite{CR11} for varieties over a perfect field).
\end{remark}

We develop a theory of $\mathbb{F}_p$-rationality for singularities satisfying the Frobenius--stable GR vanishing theorem, analogous to the one in characteristic 0.

\begin{proposition} \label{lem: equiv_conj_GR_CM}
Let $(R,\m)$ be a $d$-dimensional Noetherian $F$-finite normal domain such that $X\coloneqq \Spec\,R$ has $\mathbb{F}_p$-rational singularities.
Let $f \colon Y \to X$ be a proper birational morphism, where $Y$ is regular.
Then for all $i > 0$, we have that $R^{d-i}f_*\omega_Y \sim_{C} 0$ if and only if $H^{i}_{\m}(\sO_X)\sim_{F} 0$. 

In particular, $X$ satisfies the Frobenius stable GR vanishing theorem if and only if $X$ is CM up to nilpotence.
\end{proposition}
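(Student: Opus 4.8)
The plan is to push the isomorphism $\cO_X\cong\mathbf{R}f_*\cO_Y$ supplied by $\mathbb{F}_p$-rationality through the crystalline duality equivalence of \autoref{thm: duality}, read off cohomology sheaves, and compare with the local duality for crystals used in \autoref{lem: equivalent_def_CM_up_to_nilp}.

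Concretely, fix a normalised unit dualizing complex $\omega_R^\bullet$ on $X$ and set $\omega_Y^\bullet\coloneqq f^!\omega_R^\bullet$. Since $X$ has $\mathbb{F}_p$-rational singularities, the given $f$ is $\mathbb{F}_p$-rational (every resolution is, by \autoref{rem: enough to check vanishing of the exceptional}), so $\cO_X\to\mathbf{R}f_*\cO_Y$ is an isomorphism in $D^b(\Crys_X^F)$. Dualizing via \autoref{thm: duality}, using the compatibility $D_X\circ\mathbf{R}f_*\cong\mathbf{R}f_*\circ D_Y$ together with $D_Y(\cO_Y)=\omega_Y^\bullet$, yields an isomorphism $\omega_R^\bullet\cong\mathbf{R}f_*\omega_Y^\bullet$ in $D^b(\Crys_X^C)$. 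As $Y$ is regular and integral of dimension $d$, its unit dualizing complex is concentrated in degree $-d$, i.e. $\omega_Y^\bullet\cong\omega_Y[d]$ with the Cartier operator $C\colon F_*\omega_Y\to\omega_Y$ on $\mathcal{H}^{-d}$; hence $\omega_R^\bullet\cong(\mathbf{R}f_*\omega_Y)[d]$ in $D^b(\Crys_X^C)$. Taking the $(-i)$-th cohomology sheaf of both sides gives $\Ext_R^{-i}(R,\omega_R^\bullet)=\mathcal{H}^{-i}(\omega_R^\bullet)\sim_C R^{d-i}f_*\omega_Y$ for every $i$. Finally, local duality for crystals (\cite{Bau23}*{Proposition 4.4.5}, as recalled in the proof of \autoref{lem: equivalent_def_CM_up_to_nilp}) gives $\Ext_R^{-i}(R,\omega_R^\bullet)\sim_C 0\iff H^i_\m(\cO_X)\sim_F 0$, and combining the last two statements proves the first assertion. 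For $i\ge d$ both sides hold automatically: $R^{d-i}f_*\omega_Y$ is $0$ when $i>d$, and equals $f_*\omega_Y\sim_C\omega_X\not\sim_C 0$ when $i=d$ by $\mathbb{F}_p$--pseudo--rationality (\autoref{rem: enough to check vanishing of the exceptional}), matching $H^i_\m(\cO_X)$ (zero for $i>d$, not $\sim_F 0$ for $i=d$ by local duality); so the real content is the range $0<i<d$.

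For the concluding ``in particular'', note that $\dim_{\crys}(\cO_X)=d$, since the stalk of $\cO_X$ at the generic point is a field carrying its own (injective, hence non-nilpotent) Frobenius, so $\Supp_{\crys}(\cO_X)=X$. Hence $\cO_X$ is CM up to nilpotence iff $\depth_{\crys}(\cO_X)=d$, i.e. iff $H^i_\m(\cO_X)\sim_F 0$ for all $i<d$; as $H^0_\m(\cO_X)=0$ ($R$ is a domain with $d\ge1$), this amounts to $H^i_\m(\cO_X)\sim_F 0$ for all $0<i<d$. By the first assertion together with $R^jf_*\omega_Y=0$ for $j\ge d$ (the fibres of $f$ have dimension $\le d-1$), this is in turn equivalent to $R^jf_*\omega_Y\sim_C 0$ for all $j>0$. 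Thus, if $X$ satisfies Frobenius--stable GR vanishing, testing \autoref{def: GR_nilpotent} against $f$ (an admissible test object, being regular and hence CM up to nilpotence) shows $X$ is CM up to nilpotence; conversely, if $X$ is CM up to nilpotence we obtain $R^jf_*\omega_Y\sim_C 0$ for all $j>0$ for the given resolution, and — granting resolution of singularities — this already forces the Frobenius--stable GR vanishing of $X$, since that property may be checked on a single resolution (the remark following \autoref{def: GR_nilpotent}, via \cite{CR12}).

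The heavy machinery is all external: the crystalline duality equivalence and its compatibility with $\mathbf{R}f_*$ (\autoref{thm: duality}) and local duality for crystals; everything else is bookkeeping with dualizing complexes and their Cartier structures. The one delicate point proper to this statement is the final reduction step — passing from the given regular resolution to an arbitrary normal variety $Y'$ admissible in \autoref{def: GR_nilpotent} that is only CM up to nilpotence. This leans on resolution of singularities and on smooth varieties being $\mathbb{F}_p$-rational (\cite{CR11}); in the absence of resolutions one would instead have to show $\mathbf{R}g_*\cO_{Y'}\sim_F\cO_X$ directly for each such $g\colon Y'\to X$ — equivalently, the $\mathbb{F}_p$-rationality of $Y'$ — and this is where the genuine difficulty would lie.
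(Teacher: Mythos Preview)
Your proof is correct and follows essentially the same route as the paper: apply crystalline duality (\autoref{thm: duality}) to the isomorphism $\cO_X\sim_F\mathbf{R}f_*\cO_Y$, use $\omega_Y^\bullet\cong\omega_Y[d]$, and invoke local duality for crystals. Your treatment of the ``in particular'' is in fact more careful than the paper's one-line appeal to \autoref{lem: equivalent_def_CM_up_to_nilp}: you correctly isolate the passage from the given regular resolution to an arbitrary normal $Y'$ that is only CM up to nilpotence as the genuine subtlety, and you note that this step relies on resolution of singularities together with \cite{CR11}, exactly as the remark following \autoref{def: GR_nilpotent} indicates.
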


\begin{proof}
By local duality \cite{Bau23}*{Proposition 4.4.5}, we have
\[
H^i_{\m}(\sO_X)\sim_{F}0\iff R^{-i}\Hom(\sO_X,\omega^{\bullet}_X)\sim_{C}0.
\]
Since $Rf_{*}\sO_Y\sim_{F}\sO_X$ by the $\mathbb{F}_p$-rationality assumption, we obtain that \[ R^{-i}\Hom(\sO_X,\omega^{\bullet}_X)\sim_{C} R^{-i}\Hom(Rf_{*}\sO_Y,\omega^{\bullet}_X)\sim_{C} 
R^{-i}f_{*}\Hom(\sO_Y,\omega_Y[d])=R^{d-i}f_{*}\omega_Y.\]
Thus, we conclude the proof of the statement before ``In particular''. The last assertion follows from \autoref{lem: equivalent_def_CM_up_to_nilp}.
\end{proof}

A typical counter-example to the usual GR vanishing theorem is given by taking cones over smooth varieties for which the Kodaira vanishing theorem fails, see for example \cite[Example 3.11]{HK15}. It turns out that these do not give rise to counter-examples to Frobenius--stable GR vanishing. Although this has already been proven in \cite{BBLSZ23}*{Proposition 5.18} (and discussion afterwards), we give a direct proof without mentioning perverse sheaves.

\begin{example} \label{ex: Frobenius-stable-GR-cone}
Let $k$ be an $F$-finite field. We follow the notations from \cite{kk-singbook}*{Section 3.1}.
Given a normal projective variety $Z$ over $k$ and an ample line bundle $L$, we define $X=C_a(Z, L)  \coloneqq \Spec_k \bigoplus_{n \geq 0} H^0(Z, L^{\otimes n})$ to be the affine cone over $Z$ (with respect to $L$) and we denote by $v$ its vertex.

\begin{claim} \label{prop: GR-cones}
If $Z$ is regular, then $f \colon Y\coloneqq \Spec_Z \bigoplus_{m \geq 0} L^{\otimes m} \to X$ is a log resolution
and $R^if_*\omega_Y \sim_{C} 0$ for $i>0$.
\end{claim}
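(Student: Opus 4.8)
The plan is to exploit the explicit description of $Y$ as a line bundle over $Z$. Recall from \cite{kk-singbook}*{Section~3.1} that $Y = BC_a(Z,L)$ is the total space of the line bundle $L^{-1}$ over $Z$: if $\pi \colon Y \to Z$ denotes the bundle projection, then $\pi_*\cO_Y = \bigoplus_{n \geq 0} L^{\otimes n}$, the morphism $f$ restricts to an isomorphism over $X \setminus \{v\}$, and it contracts the zero section $E$ (which maps isomorphically onto $Z$) to the vertex $v$. Since $Z$ is regular, $Y$ is regular and $E$ is a smooth prime divisor, so $f$ is proper birational with purely divisorial and snc exceptional locus, i.e.\ a log resolution. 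As $\pi$ is a line bundle we have $\omega_{Y/Z} = \pi^*L$, hence $\omega_Y = \pi^*(\omega_Z \otimes L)$.

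Next I would compute $R^if_*\omega_Y$ together with the grading coming from the scaling $\mathbb{G}_m$-action on the fibres of $L^{-1}$. By the projection formula, $\pi_*\omega_Y = \bigoplus_{m \geq 1}\omega_Z \otimes L^{\otimes m}$, with $\omega_Z \otimes L^{\otimes m}$ sitting in degree $m$; note the grading starts at $m = 1$, not $m = 0$, precisely because of the relative canonical twist $\omega_{Y/Z} = \pi^* L$ --- this is the crucial point. Since $X$ and $\pi$ are affine, $R^if_*\omega_Y$ is the sheaf associated to the graded module $\bigoplus_{m \geq 1} H^i(Z, \omega_Z \otimes L^{\otimes m})$. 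For $i = d \coloneqq \dim Z$, Serre duality gives $H^d(Z, \omega_Z \otimes L^{\otimes m})^{\vee} \cong H^0(Z, L^{\otimes -m}) = 0$ for $m \geq 1$ (as $L$ is ample and $\dim Z\geq 1$), and $R^if_*\omega_Y = 0$ for $i > d$ since the fibres of $f$ have dimension $\leq d$. So I only need to treat $0 < i < d$, where Serre's vanishing theorem forces $H^i(Z, \omega_Z \otimes L^{\otimes m}) = 0$ for $m \gg 0$; thus $R^if_*\omega_Y$ is a finite-length sheaf supported at $v$, concentrated in degrees $1 \leq m \leq m_0$ for some $m_0$.

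Finally I would show the Frobenius trace is nilpotent on this sheaf. The key mechanism is that $C \colon F_*\omega_Y \to \omega_Y$ divides the grading by $p$: it sends the degree-$m$ piece into the degree-$(m/p)$ piece when $p \mid m$, and annihilates it otherwise. I would verify this either from the local formula for the Cartier operator in a fibre coordinate $u$ of weight $1$ (where $C$ is supported on exponents $\equiv -1 \bmod p$ and sends $u^{a}\,du \mapsto u^{(a+1)/p - 1}\,du$, i.e.\ degree $a+1 \mapsto (a+1)/p$), or from the equivariance $F_Y \circ \rho = \rho \circ (F_{\mathbb{G}_m} \times F_Y)$ of the absolute Frobenius, using that $F_{\mathbb{G}_m}$ multiplies weights by $p$. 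Since $F$ is finite, $R^if_*(F_*\omega_Y) \cong F_*R^if_*\omega_Y$, so the induced operator $R^if_*C$ inherits the same behaviour on graded pieces. Iterating, $(R^if_*C)^n$ kills the degree-$m$ piece whenever $p^n \nmid m$; as all degrees lie in $[1, m_0]$, we get $(R^if_*C)^n = 0$ once $p^n > m_0$, hence $R^if_*\omega_Y$ is nilpotent as a Cartier module, that is $R^if_*\omega_Y \sim_C 0$.

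The main obstacle is not conceptual but bookkeeping: making precise the compatibility of the Frobenius trace with the $\mathbb{G}_m$-grading and with the formation of higher direct images. It is instructive that the same grading computation shows $R^if_*\cO_Y$ is in general \emph{not} $F$-nilpotent --- its degree-$0$ summand $H^i(Z, \cO_Z)$ is stable under the (degree-multiplying-by-$p$) Frobenius action --- so it is exactly the absence of a degree-$0$ part in $\omega_Y$, forced by $\omega_{Y/Z} = \pi^*L$, that makes the Cartier-side vanishing go through.
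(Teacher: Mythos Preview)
Your proof is correct and follows essentially the same strategy as the paper: both push forward along the bundle projection $\pi \colon Y \to Z$, identify the graded pieces of $R^if_*\omega_Y$ with $H^i(Z, \omega_Z \otimes L^{\otimes m})$ for $m \geq 1$, observe that the Cartier operator sends degree $m$ to degree $m/p$ (and kills it otherwise), and conclude by Serre vanishing. Your route is slightly more streamlined in that you exploit the affineness of $X$ to identify $R^if_*\omega_Y$ directly with $H^i(Y,\omega_Y) = H^i(Z,\pi_*\omega_Y)$, whereas the paper passes through the theorem on formal functions and the thickenings $mE$ of the exceptional divisor before arriving at the same graded computation.
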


\begin{proof}[Proof of the Claim]
Fix an integer $e > 0$, and let $E$ be the exceptional divisor of $f$, which is isomorphic to $Z$.
We have $\MO_{Y}/\MO_Y(-mE) \cong \mathcal{O}_{mE}$ for every $m \geq 1$.
By the theorem on formal functions (\cite{stacks-project}*{Tag \href{https://stacks.math.columbia.edu/tag/02OC}{02OC}}) we have the following commutative diagram, where the vertical arrows are isomorphisms:

\begin{center}
	\begin{tikzcd}
	 (F^e_*R^if_*\omega_Y)^{\wedge}_v \ar[r, "\Tr^e"] \ar[d, "\cong"] & (R^if_*\omega_Y)^{\wedge}_v \ar[d, "\cong"] \\
	  \varprojlim_m H^i(mE, F^e_*\omega_Y \otimes \mathcal{O}_{mE}) \ar[r] & \varprojlim_m H^i(mE, \omega_Y \otimes \mathcal{O}_{mE})
	\end{tikzcd}
\end{center}
The restriction of $\pi \colon Y \to Z$ to $mE \to Z$ gives a finite morphism, so we can compute 
$H^i(mE, \omega_Y \otimes \mathcal{O}_{mE}) \cong H^i(Z, \pi_*(\omega_Y \otimes \mathcal{O}_{mE}))$. Fix an open cover $\{U_i\}_i$ of $Z$ over which $L|_{U_i}$ admits a nowhere vanishing section $t_i$. We define $\eta \in H^0(Y,\Omega^1_{Y/Z}(\log E))$ as being given by $d\log(\pi^*t_i)$ over $U_i$ (one readily verifies that $\eta$ is well--defined and does not depend on the choice of $\{t_i\}_i$). By \cite{kk-singbook}*{Proposition 3.14.3}, the morphism $\omega_Z \to \pi_*\omega_Y(E)$ given by $\omega \mapsto \pi^*\omega \wedge \eta$ induces an isomorphism $\pi^*\omega_Z \to \omega_Y(E)$. By the projection formula, we have that
$$\pi_*(\omega_Y(-mE)) \cong \omega_Z \otimes \pi_*\cO_Y(-(m+1)E) \cong \bigoplus_{n \geq m+1} \omega_Z \otimes L^{\otimes n}.$$ 
As $\pi$ is an affine morphism, the higher direct images $R^i\pi_*$ vanish for $i>0$ and therefore, we have:
\begin{equation*} \label{eq1}
\begin{split}
\pi_*(\omega_Y \otimes \mathcal{O}_{mE}) & \cong \coker \big({ \pi_*(\omega_Y \otimes \cO_Y(-mE)) \hookrightarrow \pi_*\omega_Y )\big)}  \\
& \cong \coker {\big( \bigoplus_{n \geq m+1} (\omega_Z \otimes L^{\otimes n}) \hookrightarrow \bigoplus_{n \geq 1} \: (\omega_Z \otimes L^{\otimes n}) \big) }  \cong \bigoplus_{1 \leq n \leq m}(\omega_Z \otimes L^{\otimes n}), 
\end{split}
\end{equation*}
Similarly, we have

\begin{equation*} \label{eq2}
\begin{split}
\pi_*(F^e_*\omega_Y \otimes \mathcal{O}_{mE}) & \cong \coker \big({ \pi_*(F^e_*\omega_Y \otimes \cO_Y(-mE)) \to \pi_*F^e_*\omega_Y )\big)}  \\
& \cong \coker {\big(\pi_*F^e_*(\omega_Y \otimes \cO_Y(-p^emE)) \to \pi_*F^e_*\omega_Y } \big) \\
& \cong \coker {\big( \bigoplus_{n \geq p^em+1} F^e_*(\omega_Z \otimes L^{\otimes n}) \to \bigoplus_{n \geq 1} F^e_*(\omega_Z \otimes L^{\otimes n}) \big) } \\
& \cong \bigoplus_{1 \leq n \leq mp^e}F^e_*(\omega_Z \otimes L^{\otimes n}). 
\end{split}
\end{equation*}
We claim that the diagram 
\[ 
    \begin{tikzcd}
	   \pi_*(F_*^e\omega_Y \otimes \mathcal{O}_{mE})) \ar[r, "\Tr^e"] \ar[d, "\cong"] & \pi_*( \omega_Y \otimes \cO_{mE})) \ar[d, "\cong"] \\
	   \bigoplus_{1 \leq n \leq p^em} F^e_*(\omega_Z \otimes L^{\otimes n}) \ar[r,"\oplus \psi_n"]  & \bigoplus_{1 \leq n \leq m} \omega_Z \otimes L^{\otimes n} 	,
    \end{tikzcd} \] commutes, where the bottom arrow is the direct sums indexed by $n$ of the following morphisms:
\begin{equation*}
\begin{cases} 
\mbox{if } n=p^ek, & \psi_n=\Tr^e_Z \otimes L^{\otimes k} \colon F^e_*(\omega_Z \otimes L^{\otimes p^ek}) \to \omega_Z \otimes L^{\otimes k};   \\ \mbox{if }n \mbox{ is not divisible by } p^e , & \psi_n=0. 
\end{cases}    
\end{equation*}

This can be shown locally on $Y$, so take an open $U \inc Z$ and a nowhere vanishing section $t$ of $L$ on $U$. Then the isomorphism $\pi_*\omega_Y(E) \cong \bigoplus_{m \geq 0}(\omega_Z \otimes L^{\otimes m})$ sends by construction an element $\sum_{m \geq 0} \left(\pi^*(\omega_m) \wedge \frac{dt}{t}\right) t^m \in \pi_*\omega_Y(E)$ to $\sum_{m \geq 0} \omega_m \otimes t^m$. Recall that the Cartier operator $C$ commutes with pullbacks, i.e. $C(\pi^*(\psi))=\pi^*(C(\psi))$ for all closed differential forms $\psi$, wedge products, fixes logarithmic forms and satisfies $C^e(t^mdt) = t^{\frac{m + 1}{p^e}}dt$ on $\bA^1$ for all $m \geq 0$ (we set the notation that $t^r = 0$ if $r \notin \bZ$). Since $\Tr = C$ on top--forms, we have that \[ \Tr^e\left(\sum_{m \geq 0} \left(\pi^*(\omega_m) \wedge \frac{dt}{t}\right) t^m\right) = \sum_{m \geq 0} \left (\pi^*\Tr^e(\omega_m) \wedge \frac{dt}{t}\right)t^{m/p^e} \] is sent to $\sum_{m \geq 0} \Tr^e(\omega_m) \otimes t^{m/p^e}$ via the isomorphism $\pi_*\omega_Y(E) \cong \bigoplus_{m \geq 0}(\omega_Z \otimes L^{\otimes m})$. The rest of the computation is now straight--forward. \\

It follows from the explicit formula above of the map $\oplus_n \psi_n$ and Serre vanishing that the iterated trace $\Tr^e \colon H^i(Z, \pi_*(F^e_*\omega_Y \otimes \cO_{mE})) \to H^i(Z, \pi_*(\omega_Y \otimes \cO_{mE}))$ vanishes for $e \gg 0$ independent of $m$, so the proof is complete. \qedhere

\end{proof}

\end{example}

\subsection{$\mathbb{F}_p$-rationality versus $W\mathcal{O}$-rationality} \label{ss-comparison}

In this subsection, we compare the notion of $\mathbb{F}_p$-rationality with $\mathbb{Q}_p$-rationality and $W\cO$-rationality. We start with introducing the (temporary) notion of $\mathbb{Z}/p^n \mathbb{Z}$-rational singularities.

\begin{definition} \label{rem: b}
We say that a projective morphism $f \colon Y \to X$ is $(\bZ/p^n\bZ)$-rational if the natural morphism $(\bZ/p^n\bZ)_X \to Rf_*(\bZ/p^n\bZ)_Y $ is an isomorphism. 
Similarly, an integral $\bF_p$-scheme $X$ has $(\bZ/p^n\bZ)$-rational singularities if it admits a $(\bZ/p^n\bZ)$-rational resolution.
\end{definition}

\begin{remark} The fact that $f$ is $\mathbb{Z}/p^n \mathbb{Z}$-rational is equivalent to to the nilpotence of the natural Frobenius action on $f_*W_n\cO_Y/W_n\cO_X$ and on $R^if_*W_n\cO_Y$ for all $i>0$ (this follows from the case $n = 1$, and the same induction argument as in the proof of \cite[Proposition 9.5.6]{Bhatt_Lurie_RH_corr_pos_char}).
\end{remark}

    
    We refer to \cite{PZ21}*{Section 3.3} and references therein for the definitions of $\bZ_p$, $\bQ_p$-sheaves and the definition of their higher direct images. One can then define $\bQ_p$-rational morphisms and singularities as in \autoref{rem: b}. Note that in this case, we only need the existence of a $\bQ_p$-rational \emph{quasi-resolution} (this follows from the case of $W\cO$-rational singularities (see \cite{CR12}*{Corollary 4.4.7}) and \cite{PZ21}*{Lemma 3.19}). 
    In particular, note that if $R^if_*\mathbb{Z}/p^n\mathbb{Z}=0$ for all $n>0$ for a resolution, then $R^if_*\mathbb{Q}_p=0$.

\begin{lemma}
    Let $X$ be a variety over an $F$-finite field $k$, and let $\pi \colon Y \to X$ be a resolution. 
    Then the following are equivalent: 

    \begin{enumerate}
        \item\label{itm:eq_F_p} $X$ has $\bF_p$-rational singularities;
        \item\label{itm:eq_Z/pn} $X$ has $(\bZ/p^n\bZ)$-rational singularities for some integer $n>0$;
        \item\label{forall} $X$ has $(\bZ/p^n\bZ)$-rational singularities for all integers $n > 0$.
    \end{enumerate} 

    Furthermore, these conditions imply that $X$ has $\bQ_p$-rational singularities.
\end{lemma}

\begin{proof}
    We show that \autoref{itm:eq_F_p} implies \autoref{forall} by induction on $n$. If $n = 1$, there is nothing to show, so assume that $n > 1$. The result then follows from the long exact sequence for higher pushforwards induced from \begin{equation}\label{eq:ses_Z/pn}
        0 \to \bF_{p, Y} \to (\bZ/p^n\bZ)_Y \to (\bZ/p^{n - 1}\bZ)_Y \to 0,
    \end{equation} the induction hypothesis, and the fact that $\pi_*(\bZ/p^n\bZ)_Y \to \pi_*(\bZ/p^{n - 1}\bZ)_Y$ is surjective. Although this last fact holds in this case since $R^1\pi_*(\bF_{p,Y}) = 0$ by assumption, this surjectivity holds in fact in general for any morphism, as shown by evaluating these sheaves on an étale open.
    
    Since \autoref{forall} implies \autoref{itm:eq_Z/pn} tautologically, we are left to show that \autoref{itm:eq_Z/pn} implies \autoref{itm:eq_F_p}. Let $d \geq 1$ be the smallest integer such that $R^j\pi_*\bF_{p, Y} = 0$ for all $j \geq d$. We want to show that $d = 1$, so assume by contradiction that $d > 1$. Note that as in the beginning of the proof, we deduce from $R^j\pi_*\bF_{p, Y} = 0$ that $R^j\pi_*(\bZ/p^l\bZ)_Y = 0$ for all $l \geq 1$ and $j \geq d$. In particular, the morphism $\theta_n \colon R^{d - 1}\pi_*(\bZ/p^n\bZ)_Y \to R^{d - 1}\pi_*\bF_{p, Y}$ coming from the short exact sequence \[ 0 \to (\bZ/p^{n - 1}\bZ)_Y \to (\bZ/p^n\bZ)_Y \to \bF_{p, Y} \to 0 \] is surjective.
    By hypothesis, $R^{d-1}\pi_* (\mathbb{Z}/p^n\mathbb{Z})_Y=0$ and thus $R^{d-1}\pi_*\bF_{p, Y} = 0$, reaching the desired contradiction. 
    
    We are left to show that $\bF_{p, X} \to \pi_*\bF_{p, Y}$ is an isomorphism. 
    From the commutative diagram 
    \begin{center}
        \begin{tikzcd}
           0 \arrow[r]  & \pi_*(\mathbb{Z}/{p^{n-1}\mathbb{Z}})_Y \ar[r] & \pi_*(\mathbb{Z}/p^n\mathbb{Z})_Y \ar[r] & \pi_*\mathbb{F}_{p, Y} \arrow[r] & R^1\pi_*(\mathbb{Z}/{p^{n-1}\mathbb{Z}})_Y=0 \\
           0 \arrow[r] & (\mathbb{Z}/p^{n-1}\mathbb{Z})_X \arrow[r] \arrow[u] & (\mathbb{Z}/p^n\mathbb{Z})_X \arrow[r] \arrow[u, "\cong"] & \mathbb{F}_{p, X} \arrow[r] \arrow[u, "\psi"] &  0,
        \end{tikzcd}
    \end{center}

\noindent we deduce that $\psi$ is surjective.
As $\psi$ is clearly injective, we conclude $\bF_{p, X} \to \pi_*\bF_{p, Y}$ is an isomorphism.

    Finally, the fact that $\mathbb{F}_p$-rationality implies $\bQ_p$-rationality is immediate by definition (see the discussion before this lemma). 
\end{proof}

For the theory of $W\mathcal{O}$-singularities, we refer to \cites{BE08, CR12}: here we recall the notion of $W\cO$-rationality.

\begin{definition}
   A variety $X$ over a perfect field $k$ has \emph{W$\mathcal{O}$-rational} singularities if for every (equiv.~some) quasi-resolution $f \colon Y \to X$, the following two conditions are satisfied:
   \begin{enumerate}
       \item $f$ induces an isomorphism $W \mathcal{O}_{X,\bQ} \cong f_* W \mathcal{O}_{Y,\mathbb{Q}} $;
       \item $R^if_*W \cO_{Y,\mathbb{Q}}=0$ for $i>0$. 
   \end{enumerate}
\end{definition}
\begin{lemma}[\cite{PZ21}*{Lemma 3.19}] \label{prop: rat-up-nil-implies-Wrat}
     If a variety $X$ over a perfect field $k$ has $W\cO$-rational singularities, then it has $\mathbb{Q}_p$-rational singularities.
\end{lemma}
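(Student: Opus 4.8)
The plan is to deduce the statement from the Artin--Schreier--Witt exact sequence, exactly as in \cite{PZ21}. Recall that for any $\bF_p$-scheme $Z$ and any $n \geq 1$, the sequence of \'etale sheaves $0 \to (\bZ/p^n\bZ)_Z \to W_n\cO_Z \xrightarrow{F-1} W_n\cO_Z \to 0$ is exact, where $F$ denotes the Witt--vector Frobenius. Passing to the inverse limit over $n$ (the transition maps are surjective, so the relevant $\varprojlim^1$ terms vanish by Mittag--Leffler) and then inverting $p$, one obtains, functorially in $Z$, a distinguished triangle of \'etale $\bQ_p$-sheaves exhibiting $(\bQ_p)_Z$ as the fibre of $F-1$ acting on $W\cO_{Z,\bQ}$. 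The first step is therefore to set this up carefully in the category of $\bZ_p$- and $\bQ_p$-sheaves used in \cite{PZ21}*{Section 3.3}; this is really the only technical point, and it is handled there.

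First I would fix a quasi-resolution $f \colon Y \to X$ witnessing the $W\cO$-rationality of $X$. Applying $\mathbf{R}f_*$ to the above triangle for $Z = Y$ gives a distinguished triangle
\[ \mathbf{R}f_*(\bQ_p)_Y \to \mathbf{R}f_*W\cO_{Y,\bQ} \xrightarrow{F-1} \mathbf{R}f_*W\cO_{Y,\bQ} \xrightarrow{+1}. \]
By the definition of $W\cO$-rationality, the natural map $W\cO_{X,\bQ} \to \mathbf{R}f_*W\cO_{Y,\bQ}$ is an isomorphism (condition (1) gives the isomorphism in degree $0$, and condition (2) the vanishing in positive degrees), and it is $F$-equivariant because $f$ is a morphism of $\bF_p$-schemes and the Witt--vector Frobenius is functorial. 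Substituting this isomorphism into the triangle identifies $\mathbf{R}f_*(\bQ_p)_Y$ with the fibre of $F-1$ acting on $W\cO_{X,\bQ}$.

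Finally, the triangle of the first paragraph applied to $Z = X$ identifies that same fibre with $(\bQ_p)_X$, concentrated in degree $0$. All of the maps in sight are induced, via functoriality of $\mathbf{R}f_*$ applied to the Artin--Schreier--Witt triangle, by the isomorphism $W\cO_{X,\bQ} \xrightarrow{\sim} \mathbf{R}f_*W\cO_{Y,\bQ}$, so a short diagram chase shows that the natural map $(\bQ_p)_X \to \mathbf{R}f_*(\bQ_p)_Y$ is an isomorphism; in particular $R^if_*(\bQ_p)_Y = 0$ for $i > 0$. Hence $f$ is a $\bQ_p$-quasi-resolution, and $X$ has $\bQ_p$-rational singularities. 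The main obstacle is purely bookkeeping: making the passage $\varprojlim_n$ followed by $\otimes_{\bZ_p}\bQ_p$ precise at the level of (pro-)sheaves, so that the Artin--Schreier--Witt triangle is genuinely available and functorial in the sense needed; once that is in place, the remainder is a formal manipulation of distinguished triangles.
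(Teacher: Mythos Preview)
The paper does not give its own proof of this lemma; it simply cites \cite{PZ21}*{Lemma 3.19}. Your proposal is precisely the argument that reference uses: one passes the Artin--Schreier--Witt triangle through $\mathbf{R}f_*$ and then compares with the same triangle on $X$, using the $W\cO$-rationality isomorphism $W\cO_{X,\bQ}\xrightarrow{\sim}\mathbf{R}f_*W\cO_{Y,\bQ}$. So your approach is correct and coincides with the cited one.

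One remark on the bookkeeping you flag. In \cite{PZ21} (and in this paper, see \autoref{rem: enough to check vanishing of the exceptional}.\autoref{rem: enough to check vanishing of the exceptional (c)}), the object $R^if_*\bQ_p$ is \emph{defined} as the image of $R^if_*\bZ_p$ in the localisation of the category of $\bZ_p$-sheaves by the Serre subcategory of $p$-power-torsion sheaves, not as a literal tensor product $(-)\otimes_{\bZ_p}\bQ_p$. Likewise $R^if_*W\cO_{Y,\bQ}$ is a localisation. So the ``passing to the inverse limit and inverting $p$'' step is really a statement about compatible triangles at each finite level, followed by an argument entirely inside these localised categories; there is no need to invoke Mittag--Leffler at the sheaf level. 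Once you phrase it that way the diagram chase is exactly as you describe.
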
 

The following diagram summarises the implications:
\begin{equation*}
\fbox{$\bF_p$-rational} \Leftrightarrow \fbox{$(\bZ/p^n\bZ)$-rational for all $n > 0$} \Rightarrow \fbox{$\bQ_p$-rational}
\Leftarrow \fbox{$W\cO$-rational}
\end{equation*}

The following examples show that $W\mathcal{O}$-rationality and $\mathbb{F}_p$-rationality are distinct notions, even for log canonical singularities.

\begin{example}\label{example1}
    Let $E$ be a supersingular elliptic curve over a perfect field $k$ and let $X$ be the affine cone over $E$.
    Then $X$ is log canonical and it has strongly $\mathbb{F}_p$-rational singularities, as $E$ is supersingular (see \autoref{rem: enough to check vanishing of the exceptional}). 
    On the other hand, $X$ is not $W\mathcal{O}$-rational by \cite{CR12}*{Theorem 4.7.4}.
\end{example}

\begin{example}
    Let $S$ be a globally $F$-split smooth Enriques surface over a perfect field of characteristic $p=2$ (called singular in \cite{BM76} or ordinary in \cite{LT22}).
    Let $X$ be the affine cone over $S$, which is a log canonical 3-fold singularity by \cite{kk-singbook}*{Lemma 3.1}.
    Since $S$ is globally $F$-split, the Frobenius map acts on $H^1(S, \mathcal{O}_S) = k$ bijectively, and thus $X$ is not $\mathbb{F}_p$-rational.
    Nevertheless, we have $H^2(S, W\mathcal{O}_{S, \bQ}) = 0$ by \cite{Ill79}*{Proposition 7.3.2} and $H^1(S, W\mathcal{O}_S) = 0$ by \cite{Ill79}*{Proposition 7.3.6}, and thus $X$ has $W\cO$-rational singularities by \cite{CR12}*{Theorem 4.7.4}.
\end{example}

\section{$\mathbb{F}_p$-rationality of Fano morphisms and of klt singularities} \label{s: global-local}

In this section, we discuss a general strategy to prove that klt singularities in positive characteristic of dimension $n$ are $\mathbb{F}_p$-rational, assuming the MMP and the vanishing of the Frobenius on the cohomology of the higher direct images of Fano type morphism. 
As a corollary, we prove that $F$-finite 3-folds klt-type singularities are $\mathbb{F}_p$-rational.

\subsection{Reduction from global-to-local}

We implement a global-to-local principle: we prove that the vanishing up to Frobenius nilpotence of higher direct images of the structure sheaf of Fano type morphisms of dimension at most $n-1$ together with MMP imply $\mathbb{F}_p$-rationality of $n$-dimensional klt type singularities.
We propose the following global and local conjectures.

\begin{conjecture} \label{conjecture}
 Let $k$ be an $F$-finite field of characteristic $p>0$.   
    \begin{enumerate}
        \item \label{conj: vanishing_nilpotent} Let $f \colon X \to Y$ be a Fano type morphism over $k$.
        Then the following vanishing of Frobenius modules holds: $R^if_*\cO_X \sim_{F} 0$ for $i>0$.
        \item \label{conj: F_p-rationality} Let $(R, \mathfrak{m})$ be a local $k$-algebra essentially of finite type. If $X=\Spec(R)$ is of klt type, then it has $\mathbb{F}_p$-rational singularities.
    \end{enumerate}
\end{conjecture}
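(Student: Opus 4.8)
Since \autoref{conjecture}.\autoref{conj: F_p-rationality} cannot be proved unconditionally, the plan is to reduce it, in dimension $n$, to \autoref{conjecture}.\autoref{conj: vanishing_nilpotent} in dimensions at most $n-1$; this is the global-to-local reduction \autoref{thm: F_p-rationality-Fano}. So I assume the vanishing $R^ig_*\cO\sim_F 0$ ($i>0$) for every Fano type morphism $g$ whose source has dimension $\le n-1$, together with the birational MMP over non-$\bQ$-factorial bases in dimension $n$ (for $n=4$ this rests on \autoref{hyp} and \autoref{cor: 4-fold-klt-base}, for $n\le 3$ on the corresponding results of \cite{HW22}, and log resolutions are needed throughout). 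Fix $\Delta\ge 0$ with $(X,\Delta)$ klt, $X=\Spec R$ of dimension $n$, and a projective log resolution $\pi\colon Y\to X$, chosen to be an isomorphism over the regular locus of $X$, with exceptional locus a divisor $E=\sum_iE_i$ of snc support, and with $E$ supporting a $\pi$-ample $\bQ$-divisor (all of which can be arranged by additional blow-ups). Because $\pi_*\cO_Y=\cO_X$ automatically, it suffices to show $R^i\pi_*\cO_Y\sim_F 0$ for $i>0$; the numerical input is $K_Y+\pi_*^{-1}\Delta+E\equiv_\pi\sum_i(1+a(E_i,X,\Delta))E_i$, which by the klt assumption is $\pi$-effective with support exactly $E$.

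First I would run the $(K_Y+\pi_*^{-1}\Delta+E)$-MMP over $X$ with scaling; by the last assertion of \autoref{thm: kollar-notqfactorial-4folds} (using that the divisor above is effective with full support $E$) it terminates with $X$. Write this MMP as $Y=Y_0\dashrightarrow\cdots\dashrightarrow Y_N=X$, with $j$-th step $\varphi_j\colon Y_j\to Z_j$, $\psi_j\colon Y_{j+1}\to Z_j$, and structure maps $\pi_j\colon Y_j\to X$, $\rho_j\colon Z_j\to X$. I would prove $\mathbf{R}(\pi_j)_*\cO_{Y_j}\sim_F\cO_X$ by \emph{descending} induction on $j$: the case $j=N$ is trivial, and the case $j=0$ is exactly the sought $\bF_p$-rationality of $\pi$. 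For the inductive step, the Leray spectral sequence and the factorizations $\pi_j=\rho_j\circ\varphi_j$, $\pi_{j+1}=\rho_j\circ\psi_j$ reduce the problem to showing that \emph{both} legs of each MMP step are $\bF_p$-rational, i.e. $\mathbf{R}(\varphi_j)_*\cO_{Y_j}\sim_F\cO_{Z_j}$ and $\mathbf{R}(\psi_j)_*\cO_{Y_{j+1}}\sim_F\cO_{Z_j}$ (for a divisorial contraction the second is vacuous). Hence everything collapses to a single statement: a birational contraction $g\colon V\to W$ occurring in such an MMP satisfies $\mathbf{R}g_*\cO_V\sim_F\cO_W$.

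To prove this, note $g_*\cO_V=\cO_W$ by normality, so only $R^ig_*\cO_V\sim_F 0$ ($i>0$) is at stake. If $g$ contracts a prime divisor $F$, then $F$ is normal (a component of the reduced boundary of a dlt pair), $-F$ is $g$-ample (by the numerical proportionality of the exceptional components, \autoref{thm: kollar-notqfactorial-4folds}.\autoref{non_Q_fact a}), and by adjunction $g|_F\colon F\to g(F)$ is a Fano type morphism of dimension $\le n-1$. Then (after replacing $F$ by a Cartier multiple) the theorem on formal functions, the exact sequences $0\to\cO_F\otimes\cO_V(-mF)\to\cO_{(m+1)F}\to\cO_{mF}\to 0$ with $\cO_{mF}:=\cO_V/\cO_V(-mF)$, and relative Serre vanishing along $g|_F$ show that the completion of $R^ig_*\cO_V$ is $R^ig_*\cO_{m_0F}$ for $m_0\gg 0$; this sheaf carries a finite filtration whose graded pieces involve only the $R^ig_*(\cO_F\otimes\cO_V(-jF))$, and the Frobenius operator carries the $j$-th level toward the $(p^ej)$-th level, which vanishes for $j\ge 1$ and $e\gg 0$ by Serre vanishing, while on the $j=0$ piece, namely $R^i(g|_F)_*\cO_F$ with $i>0$, the Frobenius is nilpotent by the assumed case of \autoref{conjecture}.\autoref{conj: vanishing_nilpotent}. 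Iterating, the Frobenius is nilpotent on the completion, hence on the coherent sheaf $R^ig_*\cO_V$ itself; this is precisely the bookkeeping carried out for cones in \autoref{ex: Frobenius-stable-GR-cone}. If instead $g$ is small, I would use proper base change (\autoref{proper_base_change_F_crystals}) and \autoref{lem: vanishing fibers}: it is enough that $H^i(V_w,\cO_{V_w})\sim_F 0$ for $i>0$ and every $w\in W$, which is clear away from the image of $\Exc(g)$; over the remaining points the fibres are positive-dimensional of dimension $\le n-2$, and in dimension $3$ they are trees of rational curves (so the vanishing is immediate), while in higher dimension one would reduce, through the reduced fibres and adjunction, to Fano type morphisms of dimension $\le n-1$.

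The hard part will be exactly this flip case in dimension $4$: controlling $H^i(\cO)$ of the scheme-theoretic fibres of a small contraction, which may fail to be reduced or normal, and reducing the problem to the Fano type vanishing in lower dimension. A secondary technical nuisance is to make the ``$j$-th level maps to the $(p^ej)$-th level'' description of the Frobenius action genuinely compatible with the formal-functions and $F$-crystal formalism, precisely enough to conclude honest nilpotence rather than mere cohomological vanishing; this is routine but delicate and is modelled on \autoref{ex: Frobenius-stable-GR-cone}. Everything else is an assembly of the quoted MMP structure theory (\autoref{thm: kollar-notqfactorial-4folds}, \autoref{cor: 4-fold-klt-base}) and the crystal machinery recalled in \autoref{s-prelims}.
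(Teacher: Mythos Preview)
Your global architecture matches the paper's proof of \autoref{thm: F_p-rationality-Fano}: run Koll\'ar's non-$\bQ$-factorial MMP from a log resolution and verify that each step is $\bF_p$-rational. Your descending induction along the MMP is a harmless reformulation, and in fact a slight streamlining: it sidesteps the paper's need for assumption~\autoref{item: codim_supp_0} in \autoref{lem: rationality_flips} (which is where part~\autoref{conj: F_p-rationality} in lower dimension enters the paper's argument). For contractions with a boundary component $S$ such that $-S$ is relatively ample, your formal-functions filtration modelled on \autoref{ex: Frobenius-stable-GR-cone} works, but the paper's \autoref{lem: rationality_contraction} is cleaner: $S^\nu\to S\to mS$ is a universal homeomorphism (the first arrow by \cite{HW20}*{Lemma 2.1}), so \autoref{universal_homeomorphism_induces_isomorphism_of_crystals} gives $R^ig_*\cO_{mS}\sim_F R^ig_*\cO_{S^\nu}$ in one stroke, and the latter vanishes by the Fano-type hypothesis on $S^\nu\to g(S)$. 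Note also that this pl-type argument already covers the small \emph{flipping} leg $\varphi_j$ (there is a boundary component with $-S$ $\varphi_j$-ample by \autoref{thm: kollar-notqfactorial-4folds}.\autoref{non_Q_fact b}), so you do not need a separate fibre argument for it.

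The genuine gap is the \emph{flipped} leg $\psi_j\colon Y_{j+1}\to Z_j$. Your proper-base-change reduction to $H^i(\cO_{\text{fibre}})\sim_F 0$ is formally correct, but the claim that the fibres reduce ``through adjunction'' to Fano type fails: $\psi_j$ is $(K_{Y_{j+1}}+\Delta_{Y_{j+1}})$-\emph{positive}, so restricting the log canonical class to a fibre yields an ample, not anti-ample, divisor, and there is no adjunction producing a Fano-type contraction. Even in dimension~$3$ there is no free reason why flipped curves are rational trees when $p\le 5$. The paper's fix, carried out in \autoref{lem: rationality_flips}, does not look at fibres at all: by \autoref{thm: kollar-notqfactorial-4folds}.\autoref{non_Q_fact b} there is a second boundary component $T$ whose strict transform $T^+$ satisfies that $-T^+$ is $\psi_j$-ample. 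One perturbs the boundary to a plt pair $(Y_{j+1},T^++D^+)$ with $K_{Y_{j+1}}+T^++D^+\sim_{\bQ,Z_j}0$; adjunction on $(T^+)^\nu$ then gives a genuine Fano-type contraction of dimension $\le n-1$, and the same universal-homeomorphism trick with $mT^+$ in place of $mS$ yields $R^i(\psi_j)_*\cO_{Y_{j+1}}\sim_F 0$. Once you insert this mechanism, your descending induction closes.
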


\begin{theorem} \label{thm: F_p-rationality-Fano}
    Let $k$ be an $F$-finite field of characteristic $p>0$. 
    Let $(R, \mathfrak{m})$ be a local $k$-algebra essentially of finite type of dimension $n$ such that $X=\Spec(R)$ is of klt type.
    Suppose that all the following statements hold:
    \begin{itemize}
        \item existence of a log resolution for every birational model of $X$;
        \item existence of the birational MMP as in \autoref{thm: kollar-notqfactorial-4folds} in dimension $n$;
        \item \autoref{conjecture} \autoref{conj: vanishing_nilpotent} and \autoref{conj: F_p-rationality} are valid in dimension $\leq n-1$.
    \end{itemize}
    Then $X$ has $\mathbb{F}_p$-rational singularities.
\end{theorem}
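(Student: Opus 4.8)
The plan is to run a minimal model program on a good resolution of $X$ and use it to reduce the desired vanishing to \autoref{conjecture}.\autoref{conj: vanishing_nilpotent} and \autoref{conjecture}.\autoref{conj: F_p-rationality} in dimension $\le n-1$, applied to the $(n-1)$-dimensional strata that the MMP extracts. Fix $\Delta\ge 0$ with $(X,\Delta)$ klt and, using the assumed existence of log resolutions, choose a log resolution $f\colon Y\to X$ whose exceptional locus $E=\Exc(f)$ supports an $f$-ample divisor. Since $X$ is normal, $f_*\mathcal O_Y=\mathcal O_X$ on the nose, so it suffices to prove $R^if_*\mathcal O_Y\sim_F 0$ for $i>0$, equivalently that the natural map $\mathcal O_X\to\mathbf Rf_*\mathcal O_Y$ is an isomorphism in $D^b(\Crys_X^F)$; as $Y$ is regular this exhibits $X$ as having $\mathbb F_p$-rational singularities. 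By the dimension-$n$ analogue of \autoref{cor: 4-fold-klt-base} --- which follows from the assumed MMP in dimension $n$ exactly as \autoref{cor: 4-fold-klt-base} follows from \autoref{thm: kollar-notqfactorial-4folds} --- we may run the $(K_Y+f_*^{-1}\Delta+E)$-MMP over $X$, which terminates with $\pi_m\colon Y_m=X$. Write the steps as $Y=Y_0\dashrightarrow\cdots\dashrightarrow Y_m=X$ with proper birational $\pi_j\colon Y_j\to X$, each step either a divisorial contraction $\varphi_j\colon Y_j\to Y_{j+1}=Z_j$ or a flip $\varphi_j\colon Y_j\to Z_j\leftarrow Y_{j+1}\colon\psi_j$, so that $\pi_j=\rho_j\circ\varphi_j$ and, in the flip case, $\pi_{j+1}=\rho_j\circ\psi_j$ for some $\rho_j\colon Z_j\to X$.

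Next I would prove by descending induction on $j$ that $\mathcal O_X\to\mathbf R\pi_{j*}\mathcal O_{Y_j}$ is an isomorphism in $D^b(\Crys_X^F)$; the case $j=m$ is trivial and $j=0$ is the goal. The crucial input, call it the \emph{Claim}, is that the natural maps $\mathcal O_{Z_j}\to\mathbf R\varphi_{j*}\mathcal O_{Y_j}$ and, in the flip case, $\mathcal O_{Z_j}\to\mathbf R\psi_{j*}\mathcal O_{Y_{j+1}}$ are isomorphisms in $D^b(\Crys_{Z_j}^F)$. Granting the Claim, apply $\mathbf R\rho_{j*}$, a well-defined functor on $D^b(\Crys^F)$ compatible with composition (\autoref{thm:RH}): in the divisorial case $\rho_j=\pi_{j+1}$ and one concludes from the inductive hypothesis; in the flip case one gets $\mathbf R\rho_{j*}\mathcal O_{Z_j}\xrightarrow{\sim}\mathbf R\pi_{j*}\mathcal O_{Y_j}$ and $\mathbf R\rho_{j*}\mathcal O_{Z_j}\xrightarrow{\sim}\mathbf R\pi_{j+1,*}\mathcal O_{Y_{j+1}}$, and two-out-of-three together with the inductive hypothesis for $\pi_{j+1}$ finishes the step.

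To prove the Claim, note that in every case $Z_j$ is normal and the morphism is birational with connected fibres, hence an isomorphism on $\mathcal H^0$, so it remains to see $R^i\varphi_{j*}\mathcal O_{Y_j}\sim_F0$ (resp.\ $R^i\psi_{j*}\mathcal O_{Y_{j+1}}\sim_F0$) for $i>0$. By \autoref{proper_base_change_F_crystals} and \autoref{lem: vanishing fibers} this amounts to $H^i\sim_F0$, $i>0$, of every scheme-theoretic fibre, and by \autoref{universal_homeomorphism_induces_isomorphism_of_crystals} one may replace each fibre by its reduction; since the morphisms are birational, these reduced fibres are supported in the exceptional locus and have dimension $\le n-1$. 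For a divisorial contraction, $\Exc(\varphi_j)$ is an irreducible divisor $E^j_l\subseteq\lfloor\Delta_{Y_j}\rfloor$, normal by dlt-ness; by adjunction $(E^j_l,\Diff)$ is dlt and $-(K_{E^j_l}+\Diff)=-(K_{Y_j}+\Delta_{Y_j})|_{E^j_l}$ is $\varphi_j|_{E^j_l}$-ample, so after perturbing the different $\varphi_j|_{E^j_l}$ is a Fano type morphism of dimension $n-1$ and \autoref{conjecture}.\autoref{conj: vanishing_nilpotent} in dimension $\le n-1$ gives $R^i(\varphi_j|_{E^j_l})_*\mathcal O_{E^j_l}\sim_F0$; as the reduced fibres of $\varphi_j$ and of $\varphi_j|_{E^j_l}$ coincide, $R^i\varphi_{j*}\mathcal O_{Y_j}\sim_F0$. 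For a flipping contraction the second alternative of property~\autoref{non_Q_fact b} of \autoref{thm: kollar-notqfactorial-4folds} forces $\Exc(\varphi_j)$ into the single component $E^j_{j_2}$ (since $-E^j_{j_2}$ is $\varphi_j$-ample), and the same adjunction argument on $E^j_{j_2}$ applies. For the flipped contraction $\psi_j$, the sign change under the flip shows $\Exc(\psi_j)$ lies in a single component $D'=E^{j+1}_{j_1}$, a normal $(n-1)$-fold of klt type (perturbing the different of the dlt boundary of $Y_{j+1}$); here the relevant divisor is no longer relatively ample, but \autoref{conjecture}.\autoref{conj: F_p-rationality} in dimension $\le n-1$ shows that both $D'$ and its birational image $\psi_j(D')$ --- again a klt-type divisor in $Z_j$ --- have $\mathbb F_p$-rational singularities, so passing to a common resolution and using functoriality of $\mathbf R(-)_*$ on crystals yields $R^i(\psi_j|_{D'})_*\mathcal O_{D'}\sim_F0$, hence $R^i\psi_{j*}\mathcal O_{Y_{j+1}}\sim_F0$.

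I expect the main obstacle to be the MMP bookkeeping inside the Claim: one must show that every positive-dimensional fibre of every step is, after reduction, the fibre of a genuine Fano type morphism (for the contractions) or of an $\mathbb F_p$-rational morphism between normal $(n-1)$-folds of klt type (for the flipped contraction). This forces careful tracking of which exceptional component contains the exceptional locus --- distinguishing flipping from flipped contractions via the sign of ampleness in~\autoref{non_Q_fact b} --- together with the standard but nontrivial facts that components of a dlt boundary are normal, that dlt adjunction produces a dlt pair, and that ``dlt with the relevant divisor ample over the base'' perturbs to ``Fano type over the base''. The flipped contraction is the subtlest point, since there fibrewise vanishing genuinely fails and one must instead feed in the inductive $\mathbb F_p$-rationality of the lower-dimensional boundary strata.
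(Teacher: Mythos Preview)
Your overall strategy---run the MMP of \autoref{thm: kollar-notqfactorial-4folds} on a log resolution and show each step preserves $\mathbb{F}_p$-rationality---matches the paper's, and your treatment of divisorial and flipping contractions is essentially correct (the paper uses a direct Serre-vanishing argument with the thickening $mS$ rather than proper base change, but your fibrewise reduction works equally well). One minor point you gloss over: in positive characteristic, components of $\lfloor\Delta\rfloor$ in a dlt pair are not known to be normal; the paper works with the normalisation and uses that $S^{\nu}\to S$ is a universal homeomorphism (\cite{HW20}*{Lemma 2.1}), which is enough by \autoref{universal_homeomorphism_induces_isomorphism_of_crystals}.

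The genuine gap is in the flipped contraction $\psi_j$. You assert that $\psi_j(D')$ is ``again a klt-type divisor in $Z_j$'', but this is not justified: $Z_j$ is the base of a flip, typically not $\mathbb{Q}$-factorial, so you cannot run adjunction on the image divisor, and there is no evident reason $\psi_j(D')$ should be of klt type as an $(n-1)$-fold. Without this, your appeal to \autoref{conjecture}.\autoref{conj: F_p-rationality} in dimension $n-1$ for $\psi_j(D')$ fails, and your common-resolution argument breaks down. The paper avoids this entirely (see \autoref{lem: rationality_flips}): on the flipping side one can choose an ample $H$ with $K_X+T+D+H\sim_{\mathbb{Q},Z}0$ and $(X,T+D+H)$ plt; since the flip is small, the pushforward $(X^+,T^++D^++H^+)$ is still plt with the same numerical triviality, and after adjunction to $(T^+)^{\nu}$ the class $H^+|_{(T^+)^{\nu}}$ is big over $Z$ (the map is birational). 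Thus $(T^+)^{\nu}\to Z$ is of Fano type and \autoref{conjecture}.\autoref{conj: vanishing_nilpotent} applies---so the flipped side is again handled by the Fano-type vanishing, not by $\mathbb{F}_p$-rationality of the image. The paper does use \autoref{conjecture}.\autoref{conj: F_p-rationality} in dimension $\le n-1$, but only to guarantee $\dim_{\crys}(R^i\psi^+_*\mathcal{O}_W)=0$ (i.e.\ vanishing away from closed points of $X^+$), which then feeds into a Leray spectral sequence argument comparing the two sides of the flip through the common resolution $W$.
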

\begin{remark} \label{label:remark_thm_induction_rationality}
If furthermore we assume the MMP (not only the birational one) in dimension $n-1$, then \cite{GNT19}*{Proposition 2.15} shows that it is sufficient to prove the vanishing up to nilpotence for varieties of Fano type in dimension at most $n-1$ (and not for any Fano type morphism).
For a precise argument, see the proof of \autoref{lem: Fano-type}.
\end{remark} 

The idea of the proof of \autoref{thm: F_p-rationality-Fano} is natural: we start with a log resolution, we run a relative MMP and we control that $\mathbb{F}_p$-rationality is preserved by each of the steps of the MMP.
We begin with the case of (pl-)birational contractions.

\begin{lemma} \label{lem: rationality_contraction}
    Let $(X,S+\Delta)$ be a dlt pair over an $F$-finite field $k$, where $S$ is a prime divisor.
    Let $g \colon X \to Z$ be a  birational contraction such that
    \begin{enumerate}
        \item $-(K_X+S+\Delta)$ is $g$-ample and 
        \item $-S$ is a $g$-ample $\mathbb{Q}$-Cartier divisor.
    \end{enumerate}
    Suppose that \autoref{conjecture}.\autoref{conj: vanishing_nilpotent} holds in dimension $\leq n-1$.
    Then $R^ig_*\cO_X \sim_F 0$ for $i>0$.
    In particular, if $X$ has $\mathbb{F}_p$-rational singularities, then so does $Z$.
\end{lemma}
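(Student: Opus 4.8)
The plan is to peel off the (possibly non‑reduced) thickenings that appear in $\mathbf{R}g_*\mathcal{O}_X$ and to reduce everything to the structure sheaf of the prime divisor $S$, on which \autoref{conjecture}.\autoref{conj: vanishing_nilpotent} applies via adjunction. As a preliminary remark, since $-S$ is $g$‑ample, every curve contracted by $g$ meets $S$ negatively and hence is contained in $\Supp(S)$; thus $\Ex(g)\subseteq \Supp(S)$, the sheaves $R^ig_*\mathcal{O}_X$ are supported on $g(S)$, and $g_*\mathcal{O}_X=\mathcal{O}_Z$ (recall $Z$ is normal, being a contraction).

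The first reduction is to thickenings of $S$. Fix $r>0$ with $rS$ Cartier. Relative Serre vanishing for the $g$‑ample line bundle $\mathcal{O}_X(-rS)$ gives $R^ig_*\mathcal{O}_X(-mrS)=0$ for all $i>0$ once $m\gg 0$; feeding this into the long exact sequence of $F$‑modules attached to $0\to\mathcal{O}_X(-mrS)\to\mathcal{O}_X\to\mathcal{O}_{mrS}\to 0$ yields, for every $i>0$ and $m\gg 0$, an isomorphism of $F$‑modules $R^ig_*\mathcal{O}_X\cong R^ig_*\mathcal{O}_{mrS}$.

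The second and crucial reduction is that this thickening is invisible at the level of crystals: I claim the surjection $\mathcal{O}_{mrS}\twoheadrightarrow\mathcal{O}_S$ is an isomorphism of $F$‑crystals, i.e. its kernel $\mathcal{O}_X(-S)/\mathcal{O}_X(-mrS)$ is a nilpotent $F$‑module. Indeed, each $\mathcal{O}_X(-jS)$ is an $F$‑submodule of $\mathcal{O}_X$ (if $\opdiv(a)\geq jS$ locally then $\opdiv(a^p)=p\,\opdiv(a)\geq jS$), and for a local section $a$ of $\mathcal{O}_X(-S)$ one has $\opdiv(a^{p^k})=p^k\opdiv(a)\geq p^kS\geq mrS$ as soon as $p^k\geq mr$; hence $a^{p^k}\in\mathcal{O}_X(-mrS)$, so the $k$‑th iterate of the $F$‑structure annihilates $\mathcal{O}_X(-S)/\mathcal{O}_X(-mrS)$. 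Therefore $\mathcal{O}_{mrS}\sim_F\mathcal{O}_S$, and since $\mathbf{R}g_*$ is defined on the derived crystal categories (and agrees there with the usual derived pushforward), we get $R^ig_*\mathcal{O}_X\cong R^ig_*\mathcal{O}_{mrS}\sim_F R^i(g|_S)_*\mathcal{O}_S$ for all $i>0$.

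It remains to kill $R^i(g|_S)_*\mathcal{O}_S$. Since $(X,S+\Delta)$ is dlt and $S$ has coefficient one, adjunction shows $S$ is normal and $(S,\Delta_S)$ with $K_S+\Delta_S=(K_X+S+\Delta)|_S$ is dlt, while $-(K_S+\Delta_S)=-(K_X+S+\Delta)|_S$ is ample over $g(S)$. Subtracting a small multiple of the reduced part of $\Delta_S$ turns $(S,\Delta_S)$ into a klt pair without losing ampleness of $-(K_S+\Delta_S)$ over $g(S)$, so $g|_S\colon S\to g(S)$ is a morphism of Fano type with $\dim S\leq n-1$; then \autoref{conjecture}.\autoref{conj: vanishing_nilpotent} in dimension $\leq n-1$ gives $R^i(g|_S)_*\mathcal{O}_S\sim_F 0$ for $i>0$, whence $R^ig_*\mathcal{O}_X\sim_F 0$ for $i>0$. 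Finally, if $h\colon W\to X$ is an $\mathbb{F}_p$‑rational projective resolution, then $g\circ h\colon W\to Z$ is a projective resolution of $Z$ and $\mathbf{R}(g\circ h)_*\mathcal{O}_W\cong\mathbf{R}g_*\mathbf{R}h_*\mathcal{O}_W\sim_F\mathbf{R}g_*\mathcal{O}_X\sim_F\mathcal{O}_Z$ in $D^b(\Crys_Z^F)$, so $Z$ has $\mathbb{F}_p$‑rational singularities. The only delicate points are the $F$‑module bookkeeping through the exact sequences (routine) and the standard dlt adjunction and perturbation over $F$‑finite fields; the conceptual heart is the collapse $\mathcal{O}_{mS}\sim_F\mathcal{O}_S$, which is precisely what transports the Fano‑type vanishing on the reduced divisor $S$ to $\mathbf{R}g_*\mathcal{O}_X$.
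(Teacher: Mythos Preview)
Your approach matches the paper's: use Serre vanishing to reduce $R^ig_*\mathcal{O}_X$ to $R^ig_*\mathcal{O}_{mS}$, collapse $\mathcal{O}_{mS}\sim_F\mathcal{O}_S$ at the crystal level, then invoke adjunction and the inductive hypothesis on the divisor. Your direct nilpotence computation for the collapse is exactly what the paper packages as ``$S\to mS$ is a universal homeomorphism'' together with \autoref{universal_homeomorphism_induces_isomorphism_of_crystals}; either phrasing is fine.

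There is, however, a genuine gap in the adjunction step. You assert that ``adjunction shows $S$ is normal,'' but normality of dlt centres is a characteristic-zero fact (it rests on Kawamata--Viehweg vanishing, as in \cite{km-book}*{Corollary 5.52}) and is not known for dlt pairs over $F$-finite fields of positive characteristic in arbitrary dimension. Without normality of $S$ you cannot form the klt pair $(S,\Delta_S)$ needed to declare $g|_S$ of Fano type and feed it into \autoref{conjecture}.\autoref{conj: vanishing_nilpotent}. The paper does not claim normality: instead it passes to the normalisation $S^{\nu}$, performs adjunction there to obtain a Fano-type morphism $g^{\nu}\colon S^{\nu}\to g(S)$, applies the inductive hypothesis to get $R^ig^{\nu}_*\mathcal{O}_{S^{\nu}}\sim_F 0$, and then uses that $S^{\nu}\to S$ is a universal homeomorphism by \cite{HW20}*{Lemma 2.1}, so that $R^ig_*\mathcal{O}_{S^{\nu}}\sim_F R^ig_*\mathcal{O}_S$ via \autoref{universal_homeomorphism_induces_isomorphism_of_crystals}. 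With this one modification your argument is complete; the ``in particular'' clause via composing with a resolution is correct as written.
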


\begin{proof}
    For any $m \geq 0$, we see $mS$ as both a Weil divisor and a closed subscheme on $X$. In particular, $(mS)_{\red} = S$.
    
    We take a sufficiently large $m \gg 0$, and consider the following short exact sequence of Frobenius crystals:
    \begin{equation}
        0\to \mathcal{O}_X(-mS) \to \mathcal{O}_X \to \mathcal{O}_{mS} \to 0. \label{exact 4.4}
    \end{equation} 
    By adjunction, $g^\nu \colon S^{\nu} \to g(S)$ is a Fano type contraction. Therefore we have that $R^ig^{\nu}_*\mathcal{O}_{S^{\nu}} \sim_F 0$ for $i>0$ by hypothesis.
    As $S^{\nu} \to S$ is a universal homeomorphism by \cite{HW20}*{Lemma 2.1} 
    and $S \to mS$ is a universal homeomorphism by \stacksproj{0CNF}, we deduce from \autoref{universal_homeomorphism_induces_isomorphism_of_crystals} that
    \[ R^ig_*\mathcal{O}_{mS} \sim_F 0 \text{ for all } i>0.\] 
    
    On the other hand, since $-S$ is $g$-ample and $m \gg 0$ , we have $R^if_*\cO_X(-mS) = 0$ for all $i > 0$ by Serre vanishing. 
    We then conclude from the long exact sequence of \eqref{exact 4.4} that $R^if_{*}\sO_X\sim_F 0$ for all $i>0$.
\end{proof}

\begin{lemma}\label{lem: rationality_flips}
    Let $(X, S+T+\Delta)$ be a dlt pair over an $F$-finite field $k$, where $S$ and $T$ are prime divisors.
    Let 
    \[ \begin{tikzcd}
        & W \arrow[ld, "\psi"'] \arrow[rd, "\psi^+"] &                \\
        X \arrow[rd, "f"'] \arrow[rr, "\phi", dashed] &                                            & X^+ \arrow[ld, "f^+"] \\
        & Z &                     
    \end{tikzcd} \] be a commutative diagram of birational maps where
    \begin{enumerate}
        \item $\psi$ is a log resolution of $(X, S+T+\Delta)$ and the morphisms $f, f^+$ are small;
        \item $-(K_X+S+T+\Delta)$ is $f$-ample and $\phi$ is the $(K_X+S+T+\Delta)$-flip;
        \item every irreducible component of $\lfloor S+T+\Delta \rfloor$ is $\mathbb{Q}$-Cartier;
        \item \label{item: anti-ample} $-S$ is $f$-ample and $-T^{+}$ is $f^{+}$-ample, where $T^{+}$ is the strict transform of $T$ on $X^+$.
        \item \label{item: codim_supp_0} $\dim_{\crys}(R^i\psi^{+}_*\cO_W) = 0$ for all $i > 0$;
    \end{enumerate}
    Assume that \autoref{conjecture}.\autoref{conj: vanishing_nilpotent} holds in dimension $\leq n-1$.
    If $X$ has $\mathbb{F}_p$-rational singularities, then so does $X^{+}$.
\end{lemma}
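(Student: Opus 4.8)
The plan is to transport the $\mathbb{F}_p$-rationality of $X$ through the common log resolution $W$ down to $Z$, and then to pull it back up to $X^+$ using that $f^+$ is small together with hypothesis (5). First I would note that, since $X$ has $\mathbb{F}_p$-rational singularities, every resolution of $X$ is $\mathbb{F}_p$-rational (cf.\ the last item of \autoref{rem: enough to check vanishing of the exceptional}), so $\mathbf{R}\psi_*\cO_W \cong \cO_X$ in $D^b(\Crys_X^F)$. Next I would apply \autoref{lem: rationality_contraction} to the flipping contraction $f\colon X \to Z$, with $S$ as its prime divisor and $T+\Delta$ as its boundary: the pair $(X, S+T+\Delta)$ is dlt, $-(K_X+S+T+\Delta)$ is $f$-ample by hypothesis (2), and $-S$ is an $f$-ample $\bQ$-Cartier divisor by hypotheses (3) and (4). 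This gives $R^if_*\cO_X \sim_F 0$ for $i>0$, hence $\mathbf{R}f_*\cO_X \cong \cO_Z$ in $D^b(\Crys_Z^F)$ (using $f_*\cO_X = \cO_Z$, as $Z$ is normal and $f$ is a contraction). Since $f\circ\psi = f^+\circ\psi^+$, this yields
\[ \mathbf{R}f^+_*\mathbf{R}\psi^+_*\cO_W \;\cong\; \mathbf{R}(f\psi)_*\cO_W \;\cong\; \mathbf{R}f_*\mathbf{R}\psi_*\cO_W \;\cong\; \mathbf{R}f_*\cO_X \;\cong\; \cO_Z \]
in $D^b(\Crys_Z^F)$.

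I would then set $\mathcal{C} := \operatorname{cone}\big(\cO_{X^+} \to \mathbf{R}\psi^+_*\cO_W\big) \in D^b(\Crys_{X^+}^F)$. As $X^+$ is normal, $\psi^+_*\cO_W = \cO_{X^+}$, so $\mathcal{C}$ is concentrated in cohomological degrees $\geq 1$ with $\mathcal{H}^i(\mathcal{C}) = R^i\psi^+_*\cO_W$; by hypothesis (5) and \autoref{lem: vanishing fibers}, the set $\Supp_{\crys}(\mathcal{C})$ is a finite set $P$ of closed points of $X^+$, contained in $\Ex(\psi^+)$, and the lemma amounts to proving $\mathcal{C}\sim_F 0$. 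The key point is that $\mathbf{R}f^+_*$ is conservative on complexes whose crystalline support is such a finite set $P$: points of $P$ outside $\Ex(f^+)$ are fixed by the (locally isomorphic) morphism $f^+$, while over points of $P$ in $\Ex(f^+)$ the morphism $f^+$ only induces a finite extension of residue fields, so on this subcategory $\mathbf{R}f^+_*$ agrees, up to nilpotence (via \autoref{thm:RH} and the structure theory used in \autoref{lem: equivalent_def_CM_up_to_nilp}), with pushforward along a finite morphism of $0$-dimensional schemes, which is exact and faithful on crystals. Thus it suffices to show $\mathbf{R}f^+_*\mathcal{C}\sim_F 0$. Applying $\mathbf{R}f^+_*$ to the triangle defining $\mathcal{C}$ and invoking the display, $\mathbf{R}f^+_*\mathcal{C} \cong \operatorname{cone}(\alpha)$ where $\alpha\colon \mathbf{R}f^+_*\cO_{X^+} \to \cO_Z$ is the natural map, an isomorphism on $\mathcal{H}^0$; hence $\mathbf{R}f^+_*\mathcal{C}\sim_F 0$ is equivalent to $R^if^+_*\cO_{X^+}\sim_F 0$ for all $i>0$. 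For $i=1$ this is forced: $\mathbf{R}f^+_*\mathcal{C}$ is concentrated (up to nilpotence) in degrees $\geq 1$ by the same conservativity and $\mathcal{H}^0(\operatorname{cone}(\alpha)) \cong R^1f^+_*\cO_{X^+}$.

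The main obstacle is the vanishing $R^if^+_*\cO_{X^+}\sim_F 0$ for $i\geq 2$. Since such an $R^if^+_*\cO_{X^+}$ is a cohomology sheaf of $\mathbf{R}f^+_*\mathcal{C}$, it has $\dim_{\crys}=0$, so by \autoref{lem: vanishing fibers} and \autoref{proper_base_change_F_crystals} this amounts to $H^i\big((f^+)^{-1}(z),\cO\big)\sim_F 0$ for $i\geq 2$ and finitely many closed points $z\in Z$. When $\dim X\leq 3$ there is nothing to prove, because $f^+$ is small and so its fibres have dimension $\leq\dim X-2\leq 1$; this already establishes the lemma in the $3$-fold case. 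In general I would exploit hypothesis (4), that $-T^+$ is $f^+$-ample: Serre vanishing applied to $0\to\cO_{X^+}(-mT^+)\to\cO_{X^+}\to\cO_{mT^+}\to 0$ for $m\gg 0$ gives $R^if^+_*\cO_{X^+}\cong R^if^+_*\cO_{mT^+}$ for $i\geq 1$; since $T^+$ is a (normal) component of the boundary of the dlt pair it is of klt type and of dimension $<\dim X$, hence $\mathbb{F}_p$-rational by the inductive case of \autoref{conjecture}.\autoref{conj: F_p-rationality}, and combined with \autoref{universal_homeomorphism_induces_isomorphism_of_crystals} for $T^+\hookrightarrow mT^+$ this reduces the vanishing to a statement about the birational morphism $f^+|_{T^+}$ in dimension $<\dim X$, which is accessible within the induction of \autoref{thm: F_p-rationality-Fano}.

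Granting $R^if^+_*\cO_{X^+}\sim_F 0$ for all $i>0$, the chain of reductions yields $\mathbf{R}f^+_*\mathcal{C}\sim_F 0$, hence $\mathcal{C}\sim_F 0$, that is $\cO_{X^+}\xrightarrow{\;\sim\;}\mathbf{R}\psi^+_*\cO_W$ in $D^b(\Crys_{X^+}^F)$ — which is precisely the assertion that $X^+$ has $\mathbb{F}_p$-rational singularities. The only genuinely delicate ingredient is the last step of the third paragraph (controlling $R^{\geq 2}f^+_*\cO_{X^+}$ in dimension $>3$); everything else is a formal manipulation of the duality and base-change formalism for crystals together with \autoref{lem: rationality_contraction}.
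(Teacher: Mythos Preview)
Your reduction step is correct and essentially the same as the paper's: you transport $\bF_p$-rationality from $X$ through $W$ to $Z$ via \autoref{lem: rationality_contraction} and Leray, and your conservativity claim for $\mathbf{R}f^+_*$ on complexes with zero-dimensional crystalline support is exactly what the paper uses (phrased there as $R^jf^+_*\cN_i \sim_F 0$ for $j>0$ plus faithfulness of $f^+_*$ on skyscrapers). Your observation that $R^1f^+_*\cO_{X^+}\sim_F 0$ comes for free from $\mathcal{H}^0(\mathbf{R}f^+_*\mathcal{C})\sim_F 0$ is a nice point the paper does not isolate.

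The gap is in the last paragraph. You need $R^i(f^+|_{T^+})_*\cO_{T^+}\sim_F 0$, and what you invoke does not give it. First, the lemma assumes only \autoref{conjecture}.\autoref{conj: vanishing_nilpotent} in dimension $\leq n-1$, not \autoref{conjecture}.\autoref{conj: F_p-rationality}; appealing to ``the induction of \autoref{thm: F_p-rationality-Fano}'' is circular at this point. Second, even granting $\bF_p$-rationality of $(T^+)^\nu$, this controls resolutions of $(T^+)^\nu$, not the map $(T^+)^\nu \to f^+(T^+)$, whose target need not be of klt type. Third, $T^+$ is not known to be normal in positive characteristic; dlt centres are only normal up to universal homeomorphism (cf.\ the use of \cite{HW20}*{Lemma 2.1} in \autoref{lem: rationality_contraction}).

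What is actually needed, and what the paper does, is to show that $(T^+)^\nu \to f^+(T^+)$ is a \emph{Fano type} morphism, so that \autoref{conjecture}.\autoref{conj: vanishing_nilpotent} applies directly. This requires a perturbation: one replaces $S+\Delta$ by $D=(1-\varepsilon)(S+\Delta)$ to make $(X,T+D)$ plt with $-(K_X+T+D)$ $f$-ample, adds an ample $H$ to achieve $K_X+T+D+H\sim_{\bQ,Z}0$ while keeping plt, and then passes to $X^+$ (plt is preserved since $f,f^+$ are small). Adjunction on $(T^+)^\nu$ now yields a klt pair with $K+\Delta\sim_{\bQ,Z}0$, and since $f^+|_{T^+}$ is birational, the pushforward of $H^+$ is big, giving the Fano type structure. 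Your outline correctly reaches the threshold of this argument via the Serre-vanishing trick with $\cO_{X^+}(-mT^+)$, but the construction of the plt/Fano-type structure on $(T^+)^\nu$ is the substantive content you are missing.
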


\begin{proof}
    We claim that it is sufficient to show that $R^if^+_*\cO_{X^+} \sim_F 0$ for all $i > 0$.
    Let us show this. By \autoref{lem: rationality_contraction}, we have that $R^if_*\cO_X \sim_F 0$ for $i > 0$ and, as $X$ has $\mathbb{F}_p$-rational singularities by hypothesis, we also have $R^i\psi_*\cO_W \sim_F 0$ for $i>0$. By the Leray spectral sequence, we deduce $R^i(f \circ \psi)_*\cO_W \sim_F 0$. Since $\dim_{\crys}(R^i\psi^+_*\cO_W) = 0$, we know by \cite{Bau23}*{Lemma 2.2.17} that there exists a coherent Frobenius module $\cN_i$ on $X^+$, supported at finitely many closed points, such that $\cN_i \sim_F R^i\psi^+_*\cO_W$. In particular, $R^jf^+_*R^i\psi^+_*\cO_W \sim_F R^jf^+_*\cN_i \sim_F 0$ for all $i, j > 0$. We obtain from the Leray spectral sequence that \[ 0 \sim_F R^i(f^+ \circ \psi^+)_*\cO_W \sim_F f^+_*R^i\psi^+_*\cO_W, \] so $R^i\psi^+_*\cO_W \sim_F 0$. Hence, $X^+$ has $\mathbb{F}_p$-rational singularities, and the claim is proven.

    If we consider $D \coloneqq S+ \Delta -\varepsilon\lfloor(S+\Delta)\rfloor$ for $\varepsilon>0$ sufficiently small, the pair $(X, T + D)$ is plt and $-(K_X+T+D)$ is ample over $Z$. 
    As $\psi$ is a log resolution, we can apply the same proof as in \cite{GNT19}*{Lemma 2.8} to find an ample effective $\bQ$-divisor $H \geq 0$ on $X$ 
    \begin{itemize}
        \item $K_X + T + D + H \sim_{\bQ, Z} 0$;
        \item $(X, T+D+H)$ is plt.
    \end{itemize}   
    Then $K_{X^+} + T^+ + D^+ + H^+ \sim_{\bQ, Z} 0$ and, as $f$ and $f^+$ are small, the pair $(X^+, T^+ + D^+ + H^+)$ is still plt.

    Let $Y \coloneqq (T^+)^{\nu} \to T^+$ be the normalisation. 
    Since $T^+$ is regular in codimension 1 by \cite{kk-singbook}*{Theorem 2.31}, we have that $Y \to T^+$ is an isomorphism in codimension $1$. By adjunction, the pair $(Y, D_Y^+ + H_Y^+)$ is then klt and $K_Y + D_Y^+ + H_Y^+ \sim_{\bQ, Z} 0$. 
    Since $Y \to f^+(Y)$ is birational, $H_Y^+$ is automatically $f^+$-big. Hence we can write $H_Y^+ \sim_{\bQ, Z} A + E$, where $A$ is $f^+$-ample and $E$ is an effective $\mathbb{Q}$-divisor. 
    For $0 < \epsilon \ll 1$, $(Y, D_Y^+ + (1 - \epsilon)H^+ + \epsilon E)$ is still a Fano type contraction over $Z$. 
    Hence, $R^if_*^+\cO_Y \sim_F 0$ for all $i > 0$ by hypothesis.
    
    Now, let $m \gg 0$ be such that $-mT^+$ is Cartier and $R^ig^+_*\cO_{X^+}(-mT^+) = 0$ for all $i > 0$ (recall that $-T^+$ is $f$-ample). We have a short exact sequence of Frobenius modules: 
    \begin{equation} \label{eq: temp}
        0 \to \cO_{X^+}(-mT^+) \to \cO_{X^+} \to \cO_{mT^+} \to 0.    
    \end{equation}
    
    As in the proof of \autoref{lem: rationality_contraction}, the morphism $Y \to mT^+$ is a universal homeomorphism, and hence $R^if^+_*\cO_{mT^+} \sim_F 0$ for all $i > 0$. Thus, by taking the long exact sequence associated to \autoref{eq: temp}, we deduce that $R^if^+_*\cO_{X^+} \sim_F 0$ for all $i > 0$ and the proof is complete.
\end{proof}

\begin{proof}[Proof of \autoref{thm: F_p-rationality-Fano}]
    We work by induction on the dimension and we suppose that klt type singularities of dimension at most $(n-1)$ are $\mathbb{F}_p$-rational. 
    Let $\Delta \geq 0$ be a boundary such that $(X, \Delta)$ is klt and let $\pi \colon Y \to (X, \Delta)$ be a log resolution whose exceptional divisor supports a $\pi$-ample divisor, whose existence is guaranteed by \cite{KW21}*{Theorem 1}. 
    By assumption, we can run a $(K_Y+\pi^{-1}_*\Delta+\Ex(\pi))$-MMP as in \autoref{thm: kollar-notqfactorial-4folds} which terminates with $X$ (as explained in \autoref{cor: 4-fold-klt-base}).
    We have to control at each step of the MMP that $\mathbb{F}_p$-rationality is preserved.
    In the case of a divisorial contraction, we apply \autoref{lem: rationality_contraction}. 
    In the case of a flip we apply \autoref{lem: rationality_flips}, where
    \begin{enumerate}
        \item the assumption \autoref{item: codim_supp_0} is satisfied by induction on the dimension and \autoref{conjecture}.\autoref{conj: F_p-rationality},
        \item the assumption \autoref{item: anti-ample} is satisfied: We consider $E_{j_1}^j$ and $E_{j_2}^j$ in \autoref{thm: kollar-notqfactorial-4folds} as $T$ and $S$ in \autoref{lem: rationality_flips}, respectively.
        Then the strict transform $-E_{j_1}^{j+1}$ of $-E_{j_1}^{j}$ 
        is ample over the base of the flip because a $(K_Y+\pi_*^{-1}\Delta+\Ex(\pi))$-flip is a  $(-E_{j_1}^j)$-flip by \autoref{thm: kollar-notqfactorial-4folds}.\autoref{non_Q_fact a}. \qedhere
     \end{enumerate}
\end{proof}



\subsection{3-fold klt singularities are $\mathbb{F}_p$-rational}

We now show the $\mathbb{F}_p$-rationality of 3-fold singularities of klt type over an $F$-finite field. Our discussion includes the case of low characteristics $p=2,3$ and $5$ (for which rationality is known to fail in general \cites{CT19, Ber21, ABL22}) and the case of imperfect fields.
We first verify \autoref{conjecture} \autoref{conj: vanishing_nilpotent} in dimension 2.

\begin{proposition} \label{prop: nilp_vanishing_dP}
    Let $S$ be a surface over an $F$-finite field $k$, and let $f \colon S \to T$ be a morphism of Fano type.
    Then $R^i f_*\mathcal{O}_S \sim_F 0$ for $i>0$.
\end{proposition}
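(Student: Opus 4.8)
The plan is to split on the cohomological degree $i$ and, in the remaining degree $i=1$, on $\dim f(S)$. Since $S$ is a surface, every fibre of $f$ has dimension $\le 2$, so $R^if_*\mathcal O_S=0$ for $i\ge 3$, and if $\dim f(S)\ge 1$ then every fibre has dimension $\le 1$, whence $R^2f_*\mathcal O_S=0$. Still in the case $\dim f(S)\ge 1$ one in fact has $R^1f_*\mathcal O_S=0$ by \cite{Tan18}*{Theorem~3.3} (higher direct images vanish for Fano type contractions of surfaces onto a positive-dimensional base), so for such $f$ there is nothing to prove. We may therefore assume $T=\Spec k$ (enlarging $k$ by the residue field of $f(S)$, which stays $F$-finite), i.e.\ $S$ is a surface of del Pezzo type over $k$, and we must show $H^i(S,\mathcal O_S)\sim_F 0$ for $i>0$. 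As $S$ is a normal surface it is Cohen--Macaulay, so Serre duality gives $H^2(S,\mathcal O_S)^\vee\cong H^0(S,\omega_S)=H^0(S,\mathcal O_S(K_S))$; since $-(K_S+\Delta)$ is big, $K_S+\Delta$ is not pseudo-effective, hence neither is $K_S$ (because $\Delta\ge 0$), so $H^0(S,\mathcal O_S(K_S))=0$ and $H^2(S,\mathcal O_S)=0$ genuinely. Thus only $H^1(S,\mathcal O_S)\sim_F 0$ remains.

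For this I would invoke the Riemann--Hilbert correspondence. One has $\Sol(\mathcal O_S)=\mathbb F_{p,S}$ by the Artin--Schreier sequence (this persists even when $S$ is non-reduced, since nilpotents are killed by $F-1$), and $\Sol$ is exact and commutes with derived proper pushforward; so by \autoref{thm:RH} and \autoref{ex: RH}, applied to the structure morphism $\pi\colon S\to\Spec k$, we get $\Sol\bigl(H^1(S,\mathcal O_S)\bigr)=R^1\pi_*\mathbb F_{p,S}$. A constructible $\mathbb F_p$-sheaf on $\Spec k$ vanishes iff its geometric stalk does, and by proper base change in étale cohomology that stalk is $H^1_{\et}(S_{\overline k},\mathbb F_p)$. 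Hence $H^1(S,\mathcal O_S)\sim_F 0\iff H^1_{\et}(S_{\overline k},\mathbb F_p)=0$, and since étale cohomology with $\mathbb F_p$-coefficients only depends on the underlying topological space, this is $H^1_{\et}(Y,\mathbb F_p)=0$ for $Y\coloneqq (S_{\overline k})_{\red}$.

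It remains to prove this vanishing, and this is the heart of the matter --- and the point where the imperfect-field case, in which $H^1(S,\mathcal O_S)$ itself need not vanish (Maddock, Schröer), genuinely enters. The surface $Y$ is a reduced, rationally chain connected proper surface over $\overline k$ (rational chain connectedness of del Pezzo type surfaces is preserved under base change and under passage to the reduction, as the connecting rational curves are already integral and live over $\overline k$). Its normalisation $Y^\nu$ is then a normal rationally chain connected surface over $\overline k$, hence rational; a resolution of $Y^\nu$ is a smooth rational surface, which is simply connected, and since $\pi_1^{\et}$ surjects along a resolution (proper birational with connected fibres over a normal target), $\pi_1^{\et}(Y^\nu)=0$ and $H^1_{\et}(Y^\nu,\mathbb F_p)=0$. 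One then feeds this into the Mayer--Vietoris sequence attached to the conductor square $\;Y^\nu\leftarrow D^\nu,\ Y\leftarrow D\;$ of the normalisation, which reads in part
\[ H^0_{\et}(Y^\nu,\mathbb F_p)\oplus H^0_{\et}(D,\mathbb F_p)\to H^0_{\et}(D^\nu,\mathbb F_p)\to H^1_{\et}(Y,\mathbb F_p)\to H^1_{\et}(Y^\nu,\mathbb F_p)\oplus H^1_{\et}(D,\mathbb F_p). \]
The conductor $D$ of a geometrically non-normal del Pezzo type surface is, by the structure theory of such surfaces, a tree of rational curves; this makes the rightmost term vanish and the leftmost map surjective, whence $H^1_{\et}(Y,\mathbb F_p)=0$. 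The main obstacle is precisely this control of the conductor over imperfect fields --- equivalently, ruling out "loops" in the non-normal locus of $S_{\overline k}$; once that input is secured the rest is formal. (An alternative to the last step is an induction on the imperfection degree of $k$, repeatedly base-changing along $k\hookrightarrow k^{1/p}$ and normalising, using adjunction to follow the conductor.)
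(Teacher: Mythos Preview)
Your reduction to the case $T=\Spec k$ and $i=1$ matches the paper's, and your translation via the Riemann--Hilbert correspondence to the vanishing $H^1_{\et}(S_{\overline k},\bF_p)=0$ is correct. The gap is exactly where you flag it: the Mayer--Vietoris step comparing $Y=(S_{\overline k})_{\red}$ with its normalisation $Y^\nu$ hinges on your unproven assertion that the conductor is a tree of rational curves (and that the $H^0$ map surjects). You acknowledge this as ``the main obstacle'' but do not resolve it, and the sketched induction on imperfection degree is not fleshed out either. As it stands, the argument is incomplete.

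The paper bypasses this entirely with a stronger input: by \cite{Tan-invariants-imperfect}*{Theorem 5.9}, the composite $(S_{\overline k})_{\red}^\nu \to S_{\overline k}$ is a \emph{universal homeomorphism}. Since $F$-crystals (equivalently, \'etale $\bF_p$-cohomology) are invariant under universal homeomorphisms (\autoref{universal_homeomorphism_induces_isomorphism_of_crystals}), one gets $H^1(S_{\overline k},\cO_{S_{\overline k}})\sim_F H^1(Y^\nu,\cO_{Y^\nu})$ for free, with no conductor analysis needed. The paper then finishes by noting that a resolution $T\to Y^\nu$ is smooth and rational (citing \cite{NT20}*{Proposition 2.26}), so $H^1(T,\cO_T)=0$, and injectivity of $H^1(Y^\nu,\cO_{Y^\nu})\hookrightarrow H^1(T,\cO_T)$ concludes. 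In your language, Tanaka's theorem says $Y^\nu\to Y$ is already a universal homeomorphism, making your Mayer--Vietoris sequence degenerate (the map $D^\nu\to D$ is itself a universal homeomorphism, so the $H^0$'s match and the boundary map vanishes). Once you insert this input, your argument and the paper's become essentially the same.
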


\begin{proof}
    If $\dim(T) \geq 1$, we conclude by the relative Kawamata--Viehweg vanishing for excellent surfaces \cite{Tan18}*{Theorem 3.3}.

    If $\dim(T)=0$, we have to show that $H^i(S, \cO_S) \sim_F 0$ for any $i > 0$. For $i \geq 2$, we clearly have $H^i(S, \cO_S) = 0$, so we only have to consider the case $i = 1$. We only have to show that $H^1(S_{\overline{k}}, \cO_{S_{\overline{k}}}) \sim_F 0$, and since $(S_{\overline{k}})_{\red}^{\nu} \to S_{\overline{k}}$ is a universal homeomorphism (see \cite{Tan-invariants-imperfect}*{Theorem 5.9} and \stacksproj{0CNF}), we deduce from \autoref{universal_homeomorphism_induces_isomorphism_of_crystals} that it is enough to show that \[ H^1\left((S_{\overline{k}})_{\red}^{\nu}, \cO_{(S_{\overline{k}})_{\red}^{\nu}}\right) = 0. \]  Let $\pi \colon T \to (S_{\overline{k}})_{\red}$ be a resolution of singularities. Since $\pi^* \colon H^1\left((S_{\overline{k}})_{\red}^{\nu}, \cO_{(S_{\overline{k}})_{\red}^{\nu}}\right) \to H^1(T, \cO_T)$ is an injection, it is enough to prove that $H^1(T, \cO_T) = 0$. This follows immediately from the fact that $T$ is smooth and rational by \cite{NT20}*{Proposition 2.26}.
\end{proof}

We now verify \autoref{conjecture} \autoref{conj: F_p-rationality} in dimension at most 3. The surface case follows from the fact that klt surface singularities are rational (see, for example, \cite[Proposition 2.28]{kk-singbook}), and thus $\mathbb{F}_p$-rational. Therefore it remains to consider the 3-dimensional case.

\begin{theorem}  \label{prop: 3-fold_sing}
Let $X$ be a 3-fold of klt type over an $F$-finite field. Then $X$ has $\mathbb{F}_p$-rational singularities.
\end{theorem}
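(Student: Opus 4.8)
The plan is to deduce this from \autoref{thm: F_p-rationality-Fano} applied with $n=3$, proving by induction on $d\in\{1,2,3\}$ that every $d$-dimensional klt-type singularity over an $F$-finite field is $\mathbb{F}_p$-rational. First I would reduce to the local setting: $\mathbb{F}_p$-rationality of a resolution can be tested stalk by stalk and, resolutions of $3$-folds being available, is independent of the chosen resolution (see \autoref{rem: enough to check vanishing of the exceptional} and \cite{CR12}), so one may assume $X=\Spec R$ with $(R,\m)$ a local $k$-algebra essentially of finite type of klt type — exactly the hypotheses of \autoref{thm: F_p-rationality-Fano}. The base case $d=1$ is immediate: a one-dimensional normal local ring is regular, and a Fano-type morphism out of a regular curve is either finite or contracts a curve of arithmetic genus $0$ to a point, so $R^if_*\cO\sim_F 0$ for $i>0$ in every case; this gives \autoref{conjecture}.\autoref{conj: vanishing_nilpotent} and \autoref{conjecture}.\autoref{conj: F_p-rationality} in dimension $1$.

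For the inductive step (first $d=2$, then $d=3$) I would verify the three hypotheses of \autoref{thm: F_p-rationality-Fano}. Resolutions: log resolutions of surfaces, of $3$-folds, and of all their birational models exist over an arbitrary — in particular $F$-finite, possibly imperfect — field by Cossart--Piltant, the snc refinement being obtained by further blowups along the non-snc locus. The birational MMP: in dimension $2$ this is the MMP for excellent surfaces, while in dimension $3$ one runs the argument of \autoref{thm: kollar-notqfactorial-4folds} one dimension lower, the needed inputs being the cone theorem and existence of birational contractions for excellent $3$-folds (as in \cite{HW-MMp-5}), existence of the relevant flips over any $F$-finite field and in any characteristic (\cite{HX15} for $p>5$ over perfect fields, together with its extensions to $p\le 5$ and to imperfect fields), and special termination, which in dimension $3$ is automatic as there are no surface flips (cf.\ \cite{XX22}). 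The lower-dimensional inputs: \autoref{conjecture}.\autoref{conj: vanishing_nilpotent} in dimension $1$ is the base case and in dimension $2$ is exactly \autoref{prop: nilp_vanishing_dP}, whereas \autoref{conjecture}.\autoref{conj: F_p-rationality} in dimension $\le d-1$ is precisely the inductive hypothesis.

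Granting this, \autoref{thm: F_p-rationality-Fano} yields $\mathbb{F}_p$-rationality for $d=2$ and then for $d=3$, which is the claim. As a sanity check I would note that for $p>5$ over a perfect field no MMP is actually required: such $3$-fold klt singularities are already rational by \cites{HW19, ABL22, BK23}, hence a fortiori $\mathbb{F}_p$-rational — the genuinely new content, and the only place the MMP is really used, is low characteristic $p\le 5$ and imperfect fields. Correspondingly, I expect the main obstacle to be the MMP hypothesis: one must confirm that the existence of flips and of birational contractions for $3$-folds holds in the full generality claimed (all $F$-finite fields, all $p>0$) and that the proof of \autoref{thm: kollar-notqfactorial-4folds} carries over to dimension three with these inputs; all the remaining ingredients are either direct citations or trivial one-dimensional verifications.
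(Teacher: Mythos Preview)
Your proposal is correct and follows the same approach as the paper: apply \autoref{thm: F_p-rationality-Fano} with $n=3$, the lower-dimensional vanishing being \autoref{prop: nilp_vanishing_dP} and resolutions coming from Cossart--Piltant. The one place where the paper is more economical is the MMP hypothesis: rather than re-running the argument of \autoref{thm: kollar-notqfactorial-4folds} one dimension lower and assembling the cone theorem, flips, and special termination by hand, the paper simply cites \cite{k-notQfact}*{Theorem 9}, where the required non-$\bQ$-factorial birational MMP for $3$-folds is already established in every characteristic and over arbitrary fields --- so your anticipated ``main obstacle'' dissolves into a single reference. Your explicit induction through $d=1,2$ is harmless but unnecessary: the $d\le 2$ cases of \autoref{conjecture}.\autoref{conj: F_p-rationality} hold outright because klt surface singularities are rational, and the induction is in any case built into the proof of \autoref{thm: F_p-rationality-Fano}.
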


\begin{proof}
    We only have to verify the hypotheses of \autoref{thm: F_p-rationality-Fano}: 
    the existence of resolution of singularities is established in \cite{CP19}, the birational MMP for 3-folds in \cite{k-notQfact}*{Theorem 9}, and the required vanishing for surfaces in \autoref{prop: nilp_vanishing_dP}.
\end{proof}

\section{Counterexample to Frobenius--stable GR vanishing theorem}

In this section, we construct wild quotient 3-fold singularities giving counterexamples to the Frobenius--stable version of the GR vanishing theorem. We first generalise  \cite{Forgarty_On_the_depth_of_local_rings_of_invariants_of_cyclic_groups}*{Proposition 4} to our setting.

\begin{lemma} \label{lem: loc_coh_quotient}
    Let $(R, \fm)$ be a Noetherian, $F$-finite local ring, such that $\dim(R) \geq 2$ and $\depth_{\crys}(R) \geq 2$. Let $G$ be a cyclic group of order divisible by $p$ acting on $R$ such that 
    \begin{itemize}
        \item this action is free on $\Spec R \setminus \{\fm\}$;
        \item $G$ acts trivially on the residue field $R/\fm$;
        \item $R^G$ is Noetherian, and $R^G \inc R$ is finite.
    \end{itemize}
    Then $\depth_{\crys}(R^G) \leq 2$.
\end{lemma}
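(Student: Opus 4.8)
The plan is to argue by contradiction and to trade the failure of $R^G$ being Cohen--Macaulay up to nilpotence for the non-vanishing up to nilpotence of the group-cohomology $F$-module $H^1(G,R)$. Since $\fm$ is the unique prime of $R$ lying over $\fm^G\coloneqq\fm\cap R^G$ and $R^G\inc R$ is finite, the ring $R^G$ is local with maximal ideal $\fm^G$, and the quotient morphism restricts to $\pi\colon U\to V$ on the punctured spectra $U\coloneqq\Spec R\setminus\{\fm\}$ and $V\coloneqq\Spec R^G\setminus\{\fm^G\}$. As $G$ acts freely on $U$ and $V=U/G$, the morphism $\pi$ is a finite étale Galois cover with group $G$; in particular $\pi_*\cO_U$ is étale-locally isomorphic to $\cO_V\otimes_{\bF_p}\bF_p[G]$, hence cohomologically trivial for $G$ as a sheaf on $V$ and with $(\pi_*\cO_U)^G=\cO_V$. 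Combining the Grothendieck spectral sequences for derived global sections on $V$ and for derived $G$-invariants one obtains the Hochschild--Serre (Cartan--Leray) spectral sequence
\[ E_2^{i,j}=H^i\big(G,H^j(U,\cO_U)\big)\ \Longrightarrow\ H^{i+j}(V,\cO_V), \]
and, since the absolute Frobenius is $G$-equivariant, this is a spectral sequence of $F$-modules (equivalently of $F$-quasi-crystals).

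Next I would feed in the depth hypotheses. From the local-cohomology exact sequence $0\to H^0_{\fm}(R)\to R\to H^0(U,\cO_U)\to H^1_{\fm}(R)\to 0$ and $\depth_{\crys}(R)\ge 2$ one gets $R\sim_F H^0(U,\cO_U)$ compatibly with the $G$-actions, while the same exact sequence for $R^G$ gives $H^1(V,\cO_V)\cong H^2_{\fm^G}(R^G)$. Assume for contradiction that $\depth_{\crys}(R^G)\ge 3$, so that $H^i_{\fm^G}(R^G)\sim_F 0$ for $i\le 2$; then $H^1(V,\cO_V)\sim_F 0$. The five-term exact sequence of the spectral sequence
\[ 0\to H^1\big(G,H^0(U,\cO_U)\big)\to H^1(V,\cO_V)\to H^0\big(G,H^1(U,\cO_U)\big)\to H^2\big(G,H^0(U,\cO_U)\big)\to H^2(V,\cO_V) \]
then forces $H^1\big(G,H^0(U,\cO_U)\big)\sim_F 0$ (a submodule of a locally nilpotent $F$-module is locally nilpotent), and since $H^\bullet(G,-)$ preserves local nilpotence — the $F$-structure on the standard cochain complex is diagonal, so it is annihilated by the same power of Frobenius — and hence descends to crystals, we conclude $H^1(G,R)\sim_F 0$.

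It remains to prove $H^1(G,R)\not\sim_F 0$, which is the crux. Fix a generator $\sigma$ of $G$, put $n\coloneqq|G|$ and $N_G\coloneqq 1+\sigma+\dots+\sigma^{n-1}$; the standard description of the cohomology of a cyclic group identifies $H^1(G,R)$ with $\ker(N_G\colon R\to R)/(\sigma-1)R$. This is a finitely generated $R^G$-module because $R$ is module-finite over $R^G$, hence a coherent $F$-module, with $F$-structure induced by the Frobenius of $R$: this is well defined since in characteristic $p$ one has $(\sum_i a_i)^p=\sum_i a_i^p$, so the Frobenius preserves both $\ker N_G$ and $(\sigma-1)R$. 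Because $p\mid n$ and $R$ is an $\bF_p$-algebra, $N_G(1)=n\cdot 1=0$, so $1\in\ker N_G$ and one may consider the class $[1]\in H^1(G,R)$. Since $G$ acts trivially on $R/\fm$ we have $(\sigma-1)R\inc\fm$, whence $1\notin(\sigma-1)R$ and $[1]\neq 0$; moreover the Frobenius fixes $[1]$ (as $1^{p^m}=1$), so the $F$-submodule of $H^1(G,R)$ generated by $[1]$ is not nilpotent. As a coherent $F$-module over a Noetherian ring is $\sim_F 0$ exactly when it is nilpotent, this yields $H^1(G,R)\not\sim_F 0$, the desired contradiction; therefore $\depth_{\crys}(R^G)\le 2$.

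The main obstacle is not a single hard estimate but the bookkeeping needed to run the two inputs inside the crystal formalism: checking that the Hochschild--Serre spectral sequence for $U\to V$ and the local-cohomology long exact sequences are compatible with the $F$-module structures; that $H^\bullet(G,-)$ and local cohomology descend to (quasi-)crystals and preserve local nilpotence; and that the bare hypothesis ``$G$ acts freely on $\Spec R\setminus\{\fm\}$'', together with $R^G\inc R$ being finite, genuinely yields $V=U/G$ with $\pi$ a finite étale $G$-torsor. Once these foundational points are in place, the identification of $H^1(G,R)$ and the exhibition of the explicit Frobenius-fixed class $[1]$ are elementary, as is the auxiliary fact that $H^i_{\fm^G}(M)=H^i_{\fm}(M)$ for every $R$-module $M$, used throughout, since $\fm^G R$ is $\fm$-primary.
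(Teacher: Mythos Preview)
Your argument is correct and follows the same overall strategy as the paper: both run the Hochschild--Serre spectral sequence for the \'etale $G$-cover of punctured spectra to embed $H^1(G,-)$ into $H^1(V,\cO_V)\cong H^2_{\fm^G}(R^G)$, and both exhibit the Frobenius-fixed class $[1]\in H^1(G,R)$ (via $N_G(1)=0$ and $(\sigma-1)R\subseteq\fm$) to force non-nilpotence.

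The one notable difference is in packaging. You apply the spectral sequence to $\cO_U$ and then must propagate the equivalence $R\sim_F H^0(U,\cO_U)$ through group cohomology, which is exactly the ``bookkeeping'' you flag at the end: checking that $H^\bullet(G,-)$ preserves local nilpotence and hence descends to crystals. The paper sidesteps this entirely by applying the spectral sequence to the \emph{perfection} $\cO_{\dot X}^{1/p^\infty}$. Since $\varinjlim_F(-)$ converts ``$\sim_F 0$'' into literal vanishing, one gets $H^0(\dot X,\cO_{\dot X}^{1/p^\infty})=R^{1/p^\infty}$ on the nose from $\depth_{\crys}(R)\ge 2$, and then a literal injection $H^1(G,R^{1/p^\infty})\hookrightarrow H^1(\dot Y,\cO_{\dot Y}^{1/p^\infty})$; the non-vanishing of the left side is shown by the same computation with $[1]$ (using that if $1=\sum(\sigma(r_i)-r_i)$ in $R^{1/p^\infty}$, raising to a suitable $p^N$-th power lands in $R$ and gives $1\in\fm$). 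Your route is perfectly valid, but the perfection trick is worth knowing as it eliminates precisely the compatibility checks you identified as the main obstacle.
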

\begin{proof}
    Let $X \coloneqq \Spec R$, let $\dot X \coloneqq X \setminus \{m\}$, let $Y \coloneqq X/G$, $\dot Y \coloneqq \dot X/G$, and let $\pi \colon X \to Y$ denote the quotient map. 
    The induced map $\pi \colon \dot X \to \dot Y$ is finite and étale (see \cite{Mumford_Abelian_Varieties}*{Proposition 2, page 70}). 

    Note that the following compositions of functors
    \[ \begin{tikzcd}
        \QCoh_G(\cO_{\dot X}) \arrow[rr, "(\cdot)^G"] &  & \QCoh(\cO_{\dot Y}) \arrow[rr, "{\Gamma(\dot Y, -)}"] &  & \Mod(\bF_p)
    \end{tikzcd} \] and 
    \[ \begin{tikzcd}
    \QCoh_G(\cO_{\dot X}) \arrow[rr, "{\Gamma(\dot X, -)}"] &  & \Mod_G(\bF_p) \arrow[rr, "(\cdot)^G"] &  & \Mod(\bF_p)
    \end{tikzcd} \] are equal.
    Furthermore, note that both $(\cdot)^G$ and $\Gamma(\dot X, -)$ preserve injective objects. Indeed, $(\cdot)^G$ is an equivalence of categories by \cite{Mumford_Abelian_Varieties} (they only treat the coherent case, but the quasi-coherent case is immediately deduced by taking unions), and $\Gamma(\dot X, -)$ has an exact left adjoint (the usual pullback).
    Therefore, we have a spectral sequence 
    \[ H^i\left(G, H^j\left(\dot X, \cO_{\dot X}^{1/p^\infty}\right)\right) \implies H^{i + j}\left(\dot Y, \cO_{\dot Y}^{1/p^\infty}\right). \] Since $\dot X$ has property $(S_2)$ up to nilpotence, we have $H^0(\dot X, \cO_{\dot X}^{1/p^{\infty}}) = R^{1/p^{\infty}}$. Thus, the exact sequence of terms in low degrees induces an injection \[ H^1(G, R^{1/p^{\infty}}) \hookrightarrow H^1\left(\dot Y, \sO_{\dot Y}^{1/p^\infty}\right). \] 
    We now finish the proof by assuming that $H^1(G, R^{1/p^{\infty}})\neq 0$. We note that $H^1(\dot Y, \sO_{\dot Y})\sim_{F}0$ if and only if $\varinjlim_j H^1(\dot Y, \sO_{\dot Y}^{1/p^j})=H^1(\dot Y, \varinjlim_j\sO_{\dot Y}^{1/p^j})=H^1(\dot Y, \sO_{\dot Y}^{1/p^{\infty}})=0$.
    Thus, we now have $H^1(\dot Y, \sO_{\dot Y})\not\sim_{F}0$.
    Since $Y$ is affine, we have $H^2_{\fm}(\sO_{Y})= H^1(\dot Y, \sO_{\dot Y})\not\sim_{F}0$, and we conclude.

    We now show that $H^1(G, R^{1/p^{\infty}}) \neq 0$.
    Let $\sigma$ be a generator of $G$, and let $n$ be the order of $G$. 
    We define a map $\Tr\colon R^{1/p^{\infty}}\to R^{1/p^{\infty}}$ by $\Tr(r) \coloneqq \sum_{i = 0}^{n - 1} \sigma^i(r)$ for $r\in R^{1/p^{\infty}}$.
    Using the 2-periodic resolution for cyclic groups of the trivial $\mathbb{Z}[G]$-module \cite{Bro94}*{I.(6.3)} we know that \[ 
    H^1\left(G, R^{1/p^\infty}\right) = \left\{r \in R^{1/p^\infty} \: | \: \Tr(r) = 0 \right\}/ \langle \sigma(r) - r \: | \: r \in R^{1/p^\infty}  \rangle. \] 

    We show that $1\in H^1\left(G, R^{1/p^\infty}\right)$ is a non-zero element. Firstly, $1\in \left\{ r\in R^{1/p^\infty} \: | \: \Tr(r) = 0 \right\} $ since $\Tr(1) = n=0$ by the assumption that $n$ is divisible by $p$. Suppose by contradiction that $1=0$ in $ H^1\left(G, R^{1/p^\infty}\right)$, i.e., there exist $r_1,\ldots, r_s \in R^{1/p^\infty}$ such that \[1=\sum_{i=1}^s (\sigma(r_i) - r_i).\]
    We take $N\in\Z_{>0}$ such that $r_i^{p^N}\in R$ for all $i$. Then taking $p^N$-th power, we obtain
    \[1=\sum_{i=1}^s \left(\sigma(r_i^{p^N}) - r_i^{p^N} \right).\] Since $G$ acts trivially on $R/\fm$, it follows that $\sigma(r_i^{p^N}) - r_i^{p^N}\in \fm$ for all $i=1, \dots, s$.
    Therefore, we obtain $1=\sum_i(\sigma(r_i^{p^N}) - r_i^{p^N})\in\fm$, a contradiction.
\end{proof}

We can now prove our main theorem (\autoref{thm: GR_fails_intro}).

\begin{theorem} \label{thm: counterex}
For any $p > 0$, there exists a $\mathbb{Q}$-factorial threefold $X$ over $\bF_p$ such that \[ R^1f_*\omega_Y \not\sim_C 0 \] for every resolution $f \colon Y \to X$. In addition, $X$ is $\bF_p$--rational and not Cohen--Macaulay up to nilpotence.

If $p \leq 5$, then we can take $X$ to have terminal singularities.
\end{theorem}

\begin{proof}
    Let us first deal with the case $p \leq 5$. Let $X$ be the terminal 3-fold constructed in \cite{Tot19}*{Theorem 5.1} (for $p = 2$), or in \cite{Tot24}*{Theorems 6.3 and 8.1} (respectively for $p = 3$ and $5$). In each of these cases, $X$ is a quotient of a smooth 3-fold by a $\bZ/p\bZ$-action, which is free in codimension 2. It is then automatically $\bQ$--factorial by \cite{km-book}*{Lemma 5.16}, and not CM up to nilpotence by \autoref{lem: loc_coh_quotient}. Furthermore, it has terminal singularities, and therefore has $\bF_p$--rational singularities by \autoref{prop: 3-fold_sing}. We then conclude that GR vanishing up to nilpotence must fail by \autoref{lem: equiv_conj_GR_CM}. \\

    Now, let $p > 0$ be any prime number, and let us generalise the example in \cite[Section 5]{Tot19}. Let $G = \mathbb{Z}/p\bZ = \langle \sigma \rangle$, and consider the $G$--action on $Y_0 \coloneqq (\mathbb{A}^1 \setminus \left\{1, 2,..., p-1 \right\})^3$ given by

    $$ \sigma (x_0, x_1, x_2) = \left(\frac{x_0}{1+x_0}, \frac{x_1}{1+x_1}, \frac{x_2}{1+x_2} \right). $$
    
    We automatically know by \autoref{lem: loc_coh_quotient} that $X_0 \coloneqq Y_0/G$ is not Cohen--Macaulay up to nilpotence. By \autoref{lem: equiv_conj_GR_CM}, we are left to show that $X_0$ has $\bF_p$--rational singularities (because then any resolution of singularities would then fail GR up to nilpotence). 
    
    Let $Y_1$ denote the blow--up of $Y_0$ at the origin, with exceptional divisor $F = \mathbb{P}^2$. The induced $G$--action is then described as

    $$ \sigma ((x_0, x_1, x_2), [w_0: w_1: w_2]) = \left( \left(\frac{x_0}{1+x_0}, \frac{x_1}{1+x_1}, \frac{x_2}{1+x_2} \right), \left[\frac{w_0}{1+x_0}: \frac{w_1}{1+x_1}: \frac{w_2}{1+x_2} \right] \right). $$

    We claim that the quotient $X_1 \coloneqq Y_1/G$ has $\mu_p$-quotient singularities, and hence strongly $F$-regular singularities by \cite[Theorem 4.3]{Pos24}.
    By symmetry, we only need to verify this in the chart $w_0 = 1$. In this chart, the action can then be written as 
    $$ \sigma (x_0, w_1, w_2) = \left(\frac{x_0}{1+x_0}, \frac{w_1(1+x_0)}{1+x_0w_1}, \frac{w_2(1+x_0)}{1+x_0w_2} \right), $$ where we recall that $x_0, x_0w_1, x_0w_2 \neq 1, \dots, p - 1$, since $x_1 = x_0w_1$ and $x_2 = x_0w_2$.

    Given $s \in \cO_{Y_1}$, we set $I(s) \coloneqq \sigma(s) - s$. The ideal of the fixed scheme of the action on $Y_1$ is then given by $(I(x_0), I(w_1), I(w_2))$. We compute that 
    
    $$I(x_0)=\frac{-x_0^2}{1+x_0}, \: I(w_1)=\frac{-x_0w_1(w_1-1)}{1+x_0w_1}, \: I(w_2)=\frac{-x_0w_2(w_2-1)}{1+x_0w_2}.$$
    Given that $1 + x_0$, $1 + x_0w_1$ and $1 + x_0w_2$ are units in our chart, we deduce that the ideal of the fixed scheme is given by $$(x_0^2, x_0w_1(w_1-1), x_0w_2(w_2-1)), $$ which is principal outside of the points $(0, 0, 0), (0, 1, 0), (0, 0, 1) $ and $(0, 1, 1).$ By \cite{KL13}*{Theorem 2}, we deduce that $X_1$ is regular outside of these points (in the chart $w_0 = 1$).

    To verify that these singularities are $ \mu_p$--quotients, we apply the criterion of \cite[Theorem 2.2]{Tot24}. Following the notation in \emph{loc. cit.}, let us set $e=s=x_0$. Given that $I(w_0) = I(w_0 - 1)$ and that $I(w_1) = I(w_1 - 1)$, our computation above shows that the hypotheses to apply \cite[Theorem 2.2]{Tot24} are satisfied, whence $X_1$ has $\mu_p$-quotient singularities.

    Fix a resolution of singularities $\pi \colon W \to X_1$, and let $f \colon X_1 \to X_0$ denote the birational map induced by the $G$--equivariant morphism $Y_1 \to Y_0$. Note that $X_1$ has in fact toric singularities (this is part of \cite{Tot24}*{Theorem 4.1}), so we know by \cite{Fulton_Toric}*{Proposition in p.76} that $R\pi_*\mathcal{O}_W=\mathcal{O}_{X_1}$. To deduce that $X_0$ has $\bF_p$--rational singularities, it is then enough to show that $Rf_*\mathcal{O}_{X_1} \sim_{F} \mathcal{O}_{X_0}$. 
    
    Let $E$ denote the exceptional divisor of $f$ and $F$ the exceptional divisor of $Y_1 \to Y_0$. By proper base change for Frobenius crystals (\autoref{proper_base_change_F_crystals}), it is enough to show that $H^i(E, \cO_E) \sim_F 0$ for all $i > 0$. By \autoref{universal_homeomorphism_induces_isomorphism_of_crystals}, it is then enough to show that the morphism $\bP^2 = F \to E$ induced by the quotient map is a universal homeomorphism. We may verify this after a base change to $\overline{\bF}_p$. Note that $F \to E$ remains bijective after the base change, since $F$ supports the fixed scheme by the previous local computation and group quotients commute with base field extensions. Note that this morphism is in particular universally injective by the equivalence between (2) and (4) in \stacksproj{01S4} (we use the fact that we work with schemes of finite type over an algebraically closed field here, so that surjectivity can be checked at rational points). We then conclude the proof by \stacksproj{04DF}. \qedhere
\end{proof}

\begin{remark}\label{rem: char_2_even_worse}
    When $p = 2$, there is even a $\bQ$-factorial $3$-fold $X$ over $\bF_2$ such that
    \begin{enumerate}
    \item $K_X$ is Cartier,
    \item $X$ is $F$-pure and has terminal singularities,
    \item $R^1f_*\omega_Y \not\sim_C 0$ for every resolution $f \colon Y \to X$. 
\end{enumerate}

Let us verify it. The wild quotient singularity of \cite{Tot19} is the quotient of $\mathbb{G}_m^3$ by the action $\mathbb{Z}/2\mathbb{Z}$ given by $(x, y, z) \to (x^{-1}, y^{-1}, z^{-1})$. Moreover, as mentioned in the proof of \cite{Tot19}*{Theorem 5.1}, $K_X$ is Cartier since the top-form $\omega=\frac{dx}{x} \wedge \frac{dy}{y} \wedge \frac{dz}{z}$ is invariant under the $\mathbb{Z}/2\mathbb{Z}$-action.
   
Since $K_X$ is Cartier, the $F$-purity of $X$ is equivalent to the surjectivity of the Frobenius trace map $C_X \colon F_*\omega_X \to \omega_X$ (cf.~\cite{fbook}*{Theorem 1.3.8}). As $C_{\mathbb{G}_m^3}(\omega)=\omega$ and $\omega$ descends on $X$, we deduce that $C_X$ is surjective as well.

Note that in this case, we can show that $X$ is not CM up to nilpotence without relying on our enhanced version of Fogarty's result. Indeed, since $X$ is $F$-pure, the Frobenius action on $H^i_{\m_x}(\sO_{X,x})$ is injective for all $i\geq 0$. Thus, Cohen-Macaulayness up to nilpotence is equivalent to usual Cohen--Macaulayness on $X$. However, we know that $X$ cannot be Cohen--Macaulay by \cite{Forgarty_On_the_depth_of_local_rings_of_invariants_of_cyclic_groups}*{Proposition 4}. 
    
\end{remark}

\begin{remark}\label{natural_compactification}
    There is a natural compactification of our affine example above for arbitrary $p > 0$. Namely, it is given the quotient of $\bP^1 \times \bP^1 \times \bP^1$ by the $\bZ/p\bZ$--action given by \[ ([x_0 : y_0], [x_1 : y_1], [x_2 : y_2]) \mapsto ([x_0 + y_0 : y_0],[x_1 + y_1 : y_1],[x_2 + y_2 : y_2]). \]
\end{remark}

We have therefore obtained a counterexample to a question of Hacon--Patakfalvi \cite{HP22}*{Question 1.6}.

\begin{corollary}\label{cor: counterex2}
    Over an algebraically closed field of characteristic $p > 0$, there exists a generically finite morphism $\pi \colon Y \to A$, where $Y$ and $A$ are smooth projective 3-folds, $A$ is an abelian variety, and $R^1\pi_*\omega_Y \not\sim_C 0$.
\end{corollary}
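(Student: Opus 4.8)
The plan is to globalise the counterexample of \autoref{thm: counterex}: I will realise one of Totaro's singularities as a point on a normal projective $3$-fold that is \emph{finite} over an abelian $3$-fold, and then extract the non-vanishing of $R^1\pi_*\omega_Y$ from \autoref{lem: equiv_conj_GR_CM} and \autoref{lem: loc_coh_quotient}. Fix $p\in\{2,3,5\}$ and work over $k=\overline{\bF_p}$. Let $(x_0\in X_0)$, $X_0=\Spec R_0$, be the $\bQ$-factorial affine terminal $3$-fold of \autoref{thm: counterex}, so that its singular locus is the isolated point $x_0$ and, near $x_0$, $X_0$ is the quotient of a regular ring by a $\bZ/p\bZ$-action satisfying the hypotheses of \autoref{lem: loc_coh_quotient} (as in the proof of \autoref{thm: counterex}). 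Since $R_0$ is a normal domain (a finite-group quotient of a regular ring) and $\bF_p$ is perfect, $R_0\otimes_{\bF_p}k$ is again a normal domain; I would pick, using Noether normalisation, a finite surjective morphism $X_0\otimes k\to\bA^3\subset\bP^3$ and, after composing with a translation of $\bA^3$, arrange that $x_0$ maps to a prescribed point. Setting $\overline X\coloneqq$ the normalisation of $\bP^3_k$ in $\mathrm{Frac}(R_0\otimes k)$ produces a normal projective $3$-fold finite over $\bP^3$ whose open part over $\bA^3$ is $\Spec(R_0\otimes k)$; in particular $\overline X$ has a point $\overline x$ whose local ring is that of $(X_0,x_0)$, so it is terminal there.

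Next I would fix an abelian $3$-fold $A/k$ together with a finite surjective morphism $g\colon A\to\bP^3$ (a general linear projection of a projective embedding of $A$). Its étale locus is a dense open of $\bP^3$, and by the freedom above I may assume $\overline x$ lies over a point $q\in\bA^3$ of this locus. Let $X'$ be the normalisation of the irreducible component of $\overline X\times_{\bP^3}A$ through a point $(\overline x,a_0)$ over $q$: since $g$ is étale over $q$, the fibre product is étale over $\overline X$ in a neighbourhood of $\overline x$, hence irreducible and isomorphic to the germ $(X_0,x_0)$ there. Then $h'\colon X'\to A$ is finite surjective, $X'$ is normal with a point $x'$ over $\overline x$ whose local ring is again Totaro's singularity; in particular $X'\not\cong A$, so $\deg h'\geq 2$. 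Taking a resolution $f\colon Y\to X'$ (\cite{CP19}) with $Y$ smooth projective and setting $\pi\coloneqq h'\circ f\colon Y\to A$, the morphism $\pi$ is generically finite of degree $\geq 2$ onto the abelian $3$-fold $A$, both $Y$ and $A$ are smooth projective $3$-folds, and it remains only to show $R^1\pi_*\omega_Y\not\sim_C 0$.

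For this, $\cO_{X',x'}$ is a $3$-dimensional normal domain of klt type, so it has $\bF_p$-rational singularities by \autoref{prop: 3-fold_sing}, and \autoref{lem: equiv_conj_GR_CM} applies at $x'$: $(R^1f_*\omega_Y)_{x'}\not\sim_C 0$ if and only if $H^2_{\fm_{x'}}(\cO_{X',x'})\not\sim_F 0$. Now (the completion of) $\cO_{X',x'}$ is the invariant ring of a $3$-dimensional regular local ring under a $\bZ/p\bZ$-action that is free on the punctured spectrum and trivial on the residue field; since a regular local ring has $\depth_{\crys}=\dim=3\geq 2$, \autoref{lem: loc_coh_quotient} gives $\depth_{\crys}(\cO_{X',x'})\leq 2$, while normality forces $\depth_{\crys}(\cO_{X',x'})\geq 2$, whence equality and $H^2_{\fm_{x'}}(\cO_{X',x'})\not\sim_F 0$; thus $(R^1f_*\omega_Y)_{x'}\not\sim_C 0$. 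Finally, $h'$ being finite gives $R^ih'_*=0$ for $i>0$, so the Leray spectral sequence yields an isomorphism of Cartier crystals $R^1\pi_*\omega_Y\cong h'_*R^1f_*\omega_Y$ on $A$; and for a finite morphism $h'_*$ is exact and faithful with $\Supp_{\crys}(h'_*\cN)=h'(\Supp_{\crys}\cN)$ (cf.~\cite{Bau23}*{Lemma 2.2.17}), hence carries non-nilpotent Cartier crystals to non-nilpotent ones. Therefore $R^1\pi_*\omega_Y\not\sim_C 0$, which also refutes \cite{HP22}*{Question 1.6}.

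The geometric input in the first two paragraphs is a light repackaging of Totaro's constructions \cites{Tot19, Tot24} through Noether normalisation, so I expect the real points of care to be two bookkeeping issues rather than anything deep: that the Leray identification $R^1\pi_*\omega_Y\cong h'_*R^1f_*\omega_Y$ is compatible with the Frobenius-trace (Cartier) structures — which it is, by functoriality of trace maps along $f$ and $h'$ — and that $\pi$ must genuinely be a finite cover of degree $\geq 2$ composed with a resolution. Indeed, were $\pi$ merely a birational morphism onto the smooth $3$-fold $A$, then $R^1\pi_*\omega_Y\sim_C 0$ would follow formally from the $\bF_p$-rationality of $A$ and \autoref{lem: equiv_conj_GR_CM}; this is also why one cannot simply resolve a quotient $A/(\bZ/p\bZ)$ of an abelian $3$-fold, since such a quotient, when the fixed locus is finite, has trivial Albanese (the coinvariants of the induced action vanish, the differential of the action being an isogeny) and therefore admits no non-constant morphism to an abelian variety at all.
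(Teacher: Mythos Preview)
Your proof is correct and follows essentially the same globalisation strategy that the paper invokes via \cite{HK15}*{Proposition 3.13}: realise Totaro's local singularity on a normal projective $3$-fold admitting a finite map to an abelian $3$-fold (Noether normalisation to $\bP^3$, normalise to compactify, then base change along a finite $A\to\bP^3$ arranged to be \'etale over the image of the bad point), resolve, and push the non-vanishing of $R^1f_*\omega_Y$ down along the finite map. You have simply written out in full what the paper defers to a citation.

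Two small remarks. First, a redundancy: instead of re-deriving $(R^1f_*\omega_Y)_{x'}\not\sim_C 0$ from \autoref{lem: loc_coh_quotient} and \autoref{lem: equiv_conj_GR_CM}, you can cite \autoref{thm: counterex} directly, since the stalk at $x'$ depends only on the germ $(X',x')\cong(X_0,x_0)$. Second, when you appeal to \autoref{prop: 3-fold_sing} for the $\bF_p$-rationality needed in \autoref{lem: equiv_conj_GR_CM}, note that $X'$ need not be of klt type \emph{globally} (you have no control over its singularities away from $x'$); what you are actually using is that $\bF_p$-rationality is local and that an open neighbourhood of $x'$ in $X'$ is isomorphic to an open in the terminal $3$-fold $X_0\otimes k$, where \autoref{prop: 3-fold_sing} applies. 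This is implicit in your write-up but worth saying explicitly.
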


\begin{proof}
    The 3-fold singularity of \autoref{thm: counterex} can be used to obtain the desired counterexample following the construction in the proof of \cite{HK15}*{Proposition 3.13}. 
    
    One could also construct a more explicit example as follows. If $G \coloneqq \bZ/p\bZ$, consider the natural map $X_0 \coloneqq (\bP^1)^3/G \to (\bP^1/G)^3 \cong (\bP^1)^3$ (see \autoref{natural_compactification}), and take an abelian threefold $A$ that admits a finite morphism to $(\bP^1)^3$ which is étale at the image of the singularity $x_0$ of $X_0$. We can then consider $X_0'$ to be any irreducible component of $X_0 \times_{(\bP^1)^3} A$ (one can in fact show that this fiber product is connected and normal, but we do not need it here). If $Y$ denotes a resolution of singularities of $X_0'$, then the induced morphism $Y \to X_0'$ fails GR vanishing up to nilpotence by \autoref{doesnt_depend} (the morphism $X_0' \to X_0$ is étale around the singularity $x_0 \in X_0$, which itself fails GR up to nilpotence). Taking the composition $\pi \colon Y \to X_0' \to A$, we deduce that $R^1\pi_*\omega_Y \not \sim_{C} 0$ by the degeneration of the Leray spectral sequence for higher direct images (recall that $X_0' \to A$ is finite).
\end{proof}

Bhatt showed in \cite{Bha12}*{Proposition 6.3} that it is possible to kill any cohomology class of anti-big and anti-semi-ample line bundle by passing to a finite cover, whose degree must then necessarily be divisible by $p$. We now show that it is not possible to kill the cohomology classes just by taking purely inseparable covers, even in the smooth case.

\begin{corollary}\label{cor: counter3}
   Over an algebraically closed field of characteristic $p  > 0$, there exists a smooth projective 3-fold $Y$, a big and semiample line bundle $L$ on $Y$, such that for any purely inseparable finite cover $f \colon Z \to Y$, the induced morphism \[ H^2(Y, L^{-1}) \to H^2(Z, f^*L^{-1}) \] is non-zero.
\end{corollary}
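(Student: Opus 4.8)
The plan is to take for $Y$ the smooth projective $3$-fold produced by the construction behind \autoref{cor: counterex2}: let $\pi\colon Y\to A$ be the generically finite morphism from a smooth projective $3$-fold $Y$ to an abelian $3$-fold $A$ with $R^1\pi_*\omega_Y\not\sim_C 0$, and set $L\coloneqq\pi^*M$ for a suitably chosen ample line bundle $M$ on $A$. Since $\pi$ is generically finite, $L$ is nef with $L^3=(\deg\pi)\,M^3>0$, hence big; and $L$ is semiample because $\pi^*$ of a base-point-free linear system is base-point-free. Thus $L^{-1}$ is of the anti-big, anti-semiample type appearing in Bhatt's theorem, and the content of the corollary is that, unlike for general finite covers, the $p$-power (purely inseparable) covers do not kill $H^2(Y,L^{-1})$.

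First I would reduce the statement to a Frobenius computation on $Y$. A finite purely inseparable $f\colon Z\to Y$ satisfies $k(Z)\subseteq k(Y)^{1/p^e}$ for some $e\geq 1$; since $Y$ is normal, after normalising $Z$ this produces a morphism $h\colon Y\to Z$ with $f\circ h=F^e$ (and, by naturality of the absolute Frobenius, $f\circ h_{e'}=F^{e'}$ for all $e'\geq e$). Therefore, if the $e$-th iterated Frobenius pullback $\Phi^e\colon H^2(Y,L^{-1})\to H^2(Y,L^{-p^e})$ is nonzero for every $e\geq 1$, then from $\Phi^e=h^*\circ f^*$ we conclude $f^*\neq 0$, which is what we want. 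By Serre duality on the smooth $3$-fold $Y$, the map $\Phi^e$ is dual to the $e$-fold twisted Cartier operator $C^e_L\colon H^1(Y,\omega_Y\otimes L^{p^e})\to H^1(Y,\omega_Y\otimes L)$, so it suffices to prove this Cartier map is nonzero for all $e\geq 1$.

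Next I would descend everything to $A$. Since $\pi\circ F_Y=F_A\circ\pi$ and $L=\pi^*M$, the projection formula identifies $R^1\pi_*(\omega_Y\otimes L^{p^e})$ with $(R^1\pi_*\omega_Y)\otimes M^{p^e}$ as Cartier modules on $A$, the Cartier structure on $R^1\pi_*\omega_Y$ being $R^1\pi_*$ of the Cartier operator of $Y$. Replacing $M$ by a high enough power of an ample bundle, we may assume $H^{>0}(A,R^j\pi_*\omega_Y\otimes M^{p^e})=0$ for all $j\geq 0$ and all $e\geq 0$ (only finitely many sheaves $R^j\pi_*\omega_Y$ occur and the exponents $p^e$ are all $\geq 1$, so one application of Serre vanishing suffices). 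Then the Leray spectral sequence collapses and yields isomorphisms $H^1(Y,\omega_Y\otimes L^{p^e})\cong H^0(A,R^1\pi_*\omega_Y\otimes M^{p^e})$ compatible with the Cartier maps. Writing $\mathcal C\coloneqq R^1\pi_*\omega_Y$, I am reduced to showing that the iterated twisted Cartier operator $H^0(A,\mathcal C\otimes M^{p^e})\to H^0(A,\mathcal C\otimes M)$ is nonzero for every $e\geq 1$.

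The hard part is this final step, where the hypothesis $\mathcal C\not\sim_C 0$ enters. Since $\mathcal C$ is a coherent Cartier module that is not locally nilpotent, the structure theory of Cartier modules (\cite{BB11}) provides a nonzero stable image $\mathcal C_s=\bigcap_e\im(\kappa^e)\subseteq\mathcal C$ on which the Cartier structure $\kappa\colon F_*\mathcal C_s\to\mathcal C_s$ is surjective; the iterated twisted operators on $\mathcal C$ factor through, and restrict on $\mathcal C_s$ to, the iterated twisted operators of $\mathcal C_s$. So the problem becomes: for a nonzero coherent Cartier module with \emph{surjective} structure map on a projective variety over a perfect field, twisted by a sufficiently ample $M$, the iterated Cartier operators on global sections do not all vanish. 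I expect to prove this by a Castelnuovo--Mumford regularity argument bounding, uniformly in $e$, the cohomology of the kernels $\ker(F^e_*\mathcal C_s\to\mathcal C_s)$ after the fixed twist, so that the inverse system $\{H^0(A,\mathcal C_s\otimes M^{p^e})\}_e$ has nonzero stable image in $H^0(A,\mathcal C_s\otimes M)$ --- this is precisely the mechanism of \autoref{ex: Frobenius-stable-GR-cone} run in reverse, where the twisted trace maps are forced to vanish because the central fibre is regular, whereas here the wildness of Totaro's quotient singularity obstructs the vanishing. Assembling the reduction steps then yields the corollary; alternatively the last step can be recast, via the Riemann--Hilbert correspondence (\autoref{thm:RH}) and the duality of \autoref{thm: duality}, as the non-vanishing of an associated constructible $\mathbb F_p$-sheaf.
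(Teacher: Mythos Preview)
Your reduction steps are correct, and the overall strategy (dualise to a twisted Cartier map and descend via the Leray spectral sequence) is sound. The genuine gap is in what you call ``the hard part'': you never actually prove that the twisted Cartier operators
\[
H^0(A,\mathcal C_s\otimes M^{p^e})\longrightarrow H^0(A,\mathcal C_s\otimes M)
\]
stay nonzero for all $e$. Invoking ``a Castelnuovo--Mumford regularity argument bounding, uniformly in $e$, the cohomology of the kernels'' is not a proof, and the kernels $\ker(F^e_*\mathcal C_s\to\mathcal C_s)$ grow with $e$, so a uniform bound after a \emph{fixed} twist is not automatic. The reference to \autoref{ex: Frobenius-stable-GR-cone} ``run in reverse'' does not supply the missing argument either.

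What you are missing, and what the paper exploits, is that the higher direct image in question has \emph{zero-dimensional} support. The paper works with a resolution $\pi\colon Y\to\overline X$ of a projective closure of the singular $3$-fold (rather than the abelian-variety construction of \autoref{cor: counterex2}); the sheaf $R^1\pi_*\omega_Y$ is then supported only at the isolated singular point $x$, and one simply chooses an effective ample divisor $D$ on $\overline X$ with $x\notin D$ and sets $L=\pi^*\cO_{\overline X}(D)$. Since the local equation of $D$ is a unit at $x$, one has
\[
R^1\pi_*\omega_Y\otimes\cO_{\overline X}(p^N D)\;\cong\;R^1\pi_*\omega_Y
\]
as Cartier modules for every $N$, so the ``twisted'' Cartier operator is literally the untwisted one, and its non-nilpotence is exactly the hypothesis $R^1\pi_*\omega_Y\not\sim_C 0$. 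No regularity argument is needed. Your abelian-variety setup could be salvaged the same way (the support of $R^1\pi_*\omega_Y$ on $A$ is again finite, and one can translate an ample divisor off it), but you would still need to make this observation explicit rather than pursue the uniform-bounds route.
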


\begin{proof}
    Let $\overline{X}$ be a projective 3-fold with only an isolated singularity $x \in \overline{X}$ as in \autoref{thm: counterex} (see e.g. \autoref{natural_compactification}), and let $\pi \colon Y \to \overline{X}$ be a resolution of singularities. Furthermore, let $D$ be an effective ample divisor on $\overline{X}$ with $x \notin D$, and let $L \coloneqq \pi^*\cO_{\overline{X}}(D)$. Since $\cO_{\overline{X}}(-D)$ is an ideal sheaf, it acquires the structure of a Frobenius module by \autoref{ex: F-mod-struct}, so also its pullback $L^{-1}$ has a natural Frobenius module structure.
    
    We claim that it is enough to show that $H^2(Y, L^{-1}) \not\sim_F 0$. Indeed, assume by contradiction that there exists a purely inseparable cover $f \colon Z \to Y$ such that the induced map on cohomology is zero. 
    We may very well assume that $f = F^s$. By definition, the Frobenius module structure on $L^{-1}$ factors as \[ L^{-1} \to F^s_*(F^{s, *}L^{-1}) \to F^s_*L^{-1}, \] where the second map comes from the multiplication by the section defining $(p^s - 1)\pi^*D$. The claim then follows.

    We now show that $H^2(Y, L^{-1}) \not\sim_F 0$. Note that if $\cM$ is a Frobenius module, then the structural morphism $\tau \colon \cM \to F_*\cM$ corresponds by adjunction to a morphism $\tau^* \colon F^*\cM \to \cM$. This gives us a way to define a Frobenius module structure on $F^*\cM$ (via the morphism $F^*\cM \to F_*F^{*}\cM$ corresponding to $F^*(\tau^*)$) and a morphism of Frobenius modules $F^*\cM \to \cM$ (which is simply $\tau^*)$. The kernel and cokernel of this morphism are by construction nilpotent, so $\cM \sim_F F^*\cM$. In our case, it is therefore enough to show that $H^2(Y, L^{-p^N}) \nsim_F 0$ for some $N \geq 0$ (with the Frobenius structure described above).

    We have by \autoref{thm: duality} and the projection formula that \[ R \Gamma(Y, L^{-1})^{\vee}  \sim_C R\Gamma(Y, L^{-p^N})^{\vee}\cong R\Gamma(Y, \omega_Y \otimes L^{p^N}) \cong R\Gamma(\overline{X}, R\pi_*\omega_Y \otimes \cO_{\overline{X}}(p^ND)). \]
    Since $D$ is ample and $N$ is sufficiently large, we know by Serre vanishing  that \[ \mathcal{H}^1R\Gamma(\overline{X}, R\pi_*\omega_Y \otimes \cO_{\overline{X}}(p^ND)) \cong H^0(\overline{X}, R^1\pi_*\omega_Y \otimes \cO_{\overline{X}}(p^N D)). \] Given that $x \notin D$ and $R^1\pi_*\omega_Y$ is supported at $x$, we obtain that \[ R^1\pi_*\omega_Y \cong R^1\pi_*\omega_Y \otimes \cO_{\overline{X}}(p^ND) \] as Cartier modules. Since $R^1\pi_*\omega_Y \not\sim_C 0$ and it is supported at a closed point, we deduce that \[ H^2(Y, L^{-1})^{\vee} \sim_C H^0(\overline{X}, R^1\pi_*\omega_Y) \not\sim_C 0, \] so $H^2(Y, L^{-1}) \not\sim_F 0$.
\end{proof}

\section{$\mathbb{F}_p$-rationality of 3-fold Fano type morphisms} \label{s: vanishing3foflds}

In this section, we show that the action of the Frobenius morphism is nilpotent on the higher direct images of the structure sheaf on 3-dimensional morphisms of Fano type over a perfect field.

\subsection{The absolute case}

We start by proving the nilpotency of the Frobenius action on the cohomology of the structure sheaf of 3-folds of Fano type.
For this, we prove some general results on the cohomology of rationally chain connected varieties. Fix a field $k$ of characteristic $p > 0$.

\begin{proposition}\label{prop:RCC}
    Let $X$ be a geometrically connected smooth proper rationally chain connected variety over $k$.
   Then $H^1(X,\cO_X)\sim_F 0$.
\end{proposition}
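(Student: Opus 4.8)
The plan is to reduce the assertion, via the Riemann--Hilbert correspondence, to the vanishing of $H^1_{\et}(X_{\overline{k}},\mathbb{F}_p)$, and then to invoke the simple connectedness of proper rationally chain connected varieties over algebraically closed fields. Let $\pi\colon X\to\Spec k$ be the structure morphism. By \autoref{ex: RH}, the $F$-crystal $H^1(X,\mathcal{O}_X)$ on $\Spec k$ corresponds, under the Riemann--Hilbert correspondence of \autoref{thm:RH}, to the constructible étale sheaf $R^1\pi_*\mathbb{F}_{p,X}$; hence
\[ H^1(X,\mathcal{O}_X)\sim_F 0 \iff R^1\pi_*\mathbb{F}_{p,X}=0\ \text{ in }\ \Sh_c((\Spec k)_{\et},\mathbb{F}_p). \]
An étale sheaf on $\Spec k$ vanishes exactly when its stalk at a geometric point $\Spec\overline{k}\to\Spec k$ does, and by proper base change in étale cohomology this stalk equals $H^1_{\et}(X_{\overline{k}},\mathbb{F}_p)$, where $X_{\overline{k}}\coloneqq X\times_k\overline{k}$. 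Since $X$ is geometrically connected, $X_{\overline{k}}$ is connected, so $H^1_{\et}(X_{\overline{k}},\mathbb{F}_p)=\Hom_{\mathrm{cont}}(\pi_1^{\et}(X_{\overline{k}}),\mathbb{F}_p)$. It therefore suffices to show that $X_{\overline{k}}$ admits no connected finite étale cover of degree $p$; in fact I would prove the stronger statement $\pi_1^{\et}(X_{\overline{k}})=1$.

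Now $X_{\overline{k}}$ is smooth, proper, connected and rationally chain connected over the algebraically closed field $\overline{k}$, and simple connectedness of such varieties is classical and characteristic-independent. The key local input is that $\mathbb{P}^1_{\overline{k}}$ is simply connected: a connected finite étale cover $D\to\mathbb{P}^1_{\overline{k}}$ of degree $d\ge 1$ with $D$ a smooth curve satisfies $2g(D)-2=-2d$ by Riemann--Hurwitz, forcing $d=1$. Consequently any finite étale cover of $X_{\overline{k}}$ pulls back to a trivial cover over every rational curve, and likewise over every connected chain of rational curves, which is a tree of copies of $\mathbb{P}^1$ and hence also simply connected. Since any two closed points of $X_{\overline{k}}$ lie on such a chain, this triviality propagates, by the standard chain-connecting-family argument, to triviality of the cover over all of $X_{\overline{k}}$ (see \cite{GNT19} and the references therein). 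Combined with the previous paragraph, this yields $H^1(X,\mathcal{O}_X)\sim_F 0$.

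The one genuinely non-formal point --- and the place where the hypothesis is really used --- is the last step: upgrading ``trivial along every chain of rational curves'' to ``trivial over $X_{\overline{k}}$'', which in characteristic $p$ must also rule out wildly ramified covers (those of degree divisible by $p$) and relies on the structure theory of rationally chain connected varieties. Alternatively, one could avoid fundamental groups altogether and deduce $H^1_{\et}(X_{\overline{k}},\mathbb{F}_p)=0$ from the vanishing, up to Frobenius nilpotence, of the first Witt-vector cohomology of $X_{\overline{k}}$ via the Artin--Schreier sequence, the route most in keeping with the $p$-adic techniques used later in this section.
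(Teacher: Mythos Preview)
Your overall strategy --- reduce via the Riemann--Hilbert correspondence to the vanishing of $H^1_{\et}(X_{\overline{k}},\mathbb{F}_p)$, and then analyse étale covers of the rationally chain connected variety --- is exactly the route the paper takes. The difference is in the last step, and there your argument has a genuine gap.

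You claim the stronger statement $\pi_1^{\et}(X_{\overline{k}})=1$. Your justification is that étale covers trivialise over each chain of rational curves (correct, since $\mathbb{P}^1$ is simply connected), and that this ``propagates'' to global triviality. But that propagation step is exactly what is \emph{not} known for rationally chain connected varieties in positive characteristic: to pass from ``trivial along every chain'' to ``globally trivial'' one needs to show that the induced bijection $Y_{x_0}\cong Y_x$ is independent of the chosen chain, which typically requires deforming chains in families --- an argument that works for \emph{separably} rationally connected varieties (via very free curves, as in Koll\'ar's work) but not obviously for RCC. Your aside about ``wildly ramified covers'' is also confused: étale covers are unramified by definition, so there is no wild ramification to rule out.

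The paper avoids this by proving only what is needed: it cites \cite{Chambert-Loir}*{Proposition 8.4} to conclude that any connected finite étale cover of a smooth proper RCC variety over an algebraically closed field has degree prime to $p$. That immediately gives $\Hom(\pi_1^{\et}(X_{\overline{k}}),\mathbb{F}_p)=0$, hence $H^1_{\et}(X_{\overline{k}},\mathbb{F}_p)=0$, without ever asserting full simple connectedness. Your argument becomes correct if you replace the unproven claim $\pi_1^{\et}=1$ by this weaker (and sufficient) input.
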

\begin{proof}
    We may assume that $k$ is algebraically closed. Since $X$ is smooth and rationally chain connected, 
    an \'etale covering of $X$ must be of degree prime to $p$ by \cite{Chambert-Loir}*{Proposition 8.4}. Thus, we have $H^1_{\et}(X,\mathbb{F}_p)=0$. We then conclude by \autoref{thm:RH} (or in this special case, usual Artin--Schreier theory \cite{LeiFu}*{Theorem 7.2.11}).
\end{proof}
\begin{proposition}\label{prop:Esnault}
    Let $X$ be a geometrically connected smooth proper rationally chain connected variety over $k$.
    Suppose that $H^3(X,\cO_X)\sim_F 0$.
    Then $H^i(X, \cO_X)\sim_F 0$ for $i\in\{1,2\}$.
\end{proposition}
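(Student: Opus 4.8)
Throughout write $d=\dim X$; since $H^i(X,\cO_X)=0$ for $i>d$ and the case $i=1$ is already \autoref{prop:RCC}, the content is the assertion $H^2(X,\cO_X)\sim_F 0$. The plan is to work with the de Rham--Witt sheaf $W\cO_X$ and bootstrap from two inputs: the vanishing $H^1(X,\cO_X)\sim_F 0$ of \autoref{prop:RCC}, and the \emph{rational} vanishing $H^j(X,W\cO_{X,\bQ})=0$ for all $j>0$, which holds for rationally chain connected smooth proper varieties by Esnault's theorem (equivalently, $H^j_{\mathrm{crys}}$ has slopes $\geq 1$ for $j>0$). First I would reduce to $k=\overline{k}$: the absolute Frobenius of $X_{\overline{k}}$ acts on $H^j(X_{\overline{k}},\cO_{X_{\overline{k}}})\cong H^j(X,\cO_X)\otimes_k\overline{k}$ as $\tau\otimes\mathrm{Frob}_{\overline{k}}$, which is nilpotent if and only if $\tau$ is, and all hypotheses (geometric connectedness, smoothness, properness, rational chain connectedness, and $H^3(X,\cO_X)\sim_F 0$) pass to $X_{\overline{k}}$.

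The main device is the short exact sequence of sheaves carrying a Frobenius action
\[ 0 \to W\cO_X \xrightarrow{\ V\ } W\cO_X \xrightarrow{\ R\ } \cO_X \to 0, \]
where $R$ is restriction to the first Witt component, $V$ commutes with the Witt-vector Frobenius $F$, and $R$ intertwines $F$ with the absolute Frobenius of $\cO_X$. The associated long exact sequence yields, for each $j$, a short exact sequence of Frobenius modules
\[ 0 \to \coker\!\big(V \mid H^j(X,W\cO_X)\big) \to H^j(X,\cO_X) \to \ker\!\big(V \mid H^{j+1}(X,W\cO_X)\big) \to 0. \]
Since $H^j(X,W\cO_{X,\bQ})=0$ for $j>0$, the module $N\coloneqq H^j(X,W\cO_X)$, which is finitely generated over $W(\overline{k})$, is of finite length for $j\geq 1$. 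For such an $N$ with its semilinear $F$ and $V$ satisfying $FV=VF=p$, the Fitting decomposition $N=N_{\mathrm{bij}}\oplus N_{\mathrm{nil}}$ of $F$, together with the structure of $N_{\mathrm{bij}}$ over $\overline{k}$ (a finite direct sum of $W_n(\overline{k})$'s with standard $F$ and $V$), shows that $F$ is nilpotent on $\coker(V\mid N)$ if and only if it is nilpotent on $\ker(V\mid N)$, each being equivalent to $N_{\mathrm{bij}}=0$. I use this equivalence throughout.

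Now run the degrees. In degree $0$, $H^0(X,W\cO_X)=W(\overline{k})$ surjects onto $\overline{k}=H^0(X,\cO_X)$ with kernel $VW(\overline{k})$, so the left map of the degree-$0$ sequence is an isomorphism; hence $\ker(V\mid H^1(X,W\cO_X))=0$, so $V$ is bijective on the finite length module $H^1(X,W\cO_X)$, forcing $\coker(V\mid H^1(X,W\cO_X))=0$. The degree-$1$ sequence then identifies $H^1(X,\cO_X)$ with $\ker(V\mid H^2(X,W\cO_X))$ as Frobenius modules; as $H^1(X,\cO_X)\sim_F 0$ by \autoref{prop:RCC}, we get $\ker(V\mid H^2(X,W\cO_X))\sim_F 0$, hence $\coker(V\mid H^2(X,W\cO_X))\sim_F 0$ by the equivalence. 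In degree $3$, the hypothesis $H^3(X,\cO_X)\sim_F 0$ forces $\coker(V\mid H^3(X,W\cO_X))\sim_F 0$, hence also $\ker(V\mid H^3(X,W\cO_X))\sim_F 0$. Finally, the degree-$2$ sequence exhibits $H^2(X,\cO_X)$ as an extension of $\ker(V\mid H^3(X,W\cO_X))$ by $\coker(V\mid H^2(X,W\cO_X))$, both nilpotent; since nilpotent Frobenius modules are closed under extension, $H^2(X,\cO_X)\sim_F 0$.

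The hard part will not be this bookkeeping but securing its inputs cleanly inside the paper's framework: (i) importing the Esnault-type vanishing $H^j(X,W\cO_{X,\bQ})=0$ for $j>0$ for RCC varieties --- this is where rational chain connectedness is genuinely used beyond \autoref{prop:RCC} --- and (ii) giving precise meaning to the exact sequences and to ``$\sim_F 0$'' for the $W(\overline{k})$-modules $H^j(X,W\cO_X)$, which are Frobenius modules over $W(\overline{k})$ rather than $\cO_X$-$F$-modules in the sense of \autoref{def: F_mod}. All the semilinear-algebra ingredients (the degree-$0$ computation, the Fitting decomposition, stability of nilpotence under extension) are elementary over $W(\overline{k})$, but the setup must be arranged carefully, or one works from the start with finite length Dieudonné modules.
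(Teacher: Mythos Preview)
Your overall strategy is different from the paper's and contains a genuine gap. The paper argues entirely on the \'etale side: from Esnault's theorem it gets $H^j_{\et}(X,\bQ_p)=0$, hence $H^j_{\et}(X,\bZ_p)$ is $p^N$-torsion; combining this with the vanishing of $H^1_{\et}$ and $H^3_{\et}$ with $\bZ/p^n$-coefficients (obtained from $H^1(X,\cO_X)\sim_F 0$ and the hypothesis $H^3(X,\cO_X)\sim_F 0$) and an inverse-limit argument, it shows $H^2_{\et}(X,\bZ_p)=0$ and then $H^2_{\et}(X,\bF_p)=0$. The crucial point is that $H^j_{\et}(X,\bZ_p)$ is a \emph{finitely generated} $\bZ_p$-module, so ``torsion plus $p^N$ injective'' forces it to vanish.

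Your argument instead works directly with $H^j(X,W\cO_X)$, and the step that breaks is the assertion that these groups are finitely generated over $W(\overline k)$. This is false in general, even under all your hypotheses. Take $X$ a supersingular K3 surface that is unirational (e.g.\ the Fermat quartic in characteristic $3$): it is smooth, proper, rationally chain connected, and $H^3(X,\cO_X)=0$, yet $H^2(X,W\cO_X)\cong k[[V]]$ with $F=0$, which is \emph{not} finitely generated over $W(\overline k)$ (see \cite{Ill79}*{II.7.2}). On such a module your Fitting-decomposition argument and the implication ``$V$ injective $\Rightarrow$ $V$ bijective'' simply do not apply. In this particular example the conclusion you want still happens to hold, but your justification does not, and for the general $H^j(X,W\cO_X)$ you would need the Illusie--Raynaud structure theory of coherent modules over the Raynaud ring rather than a finite-length argument. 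The paper's route via $\bZ_p$-\'etale cohomology is precisely designed to sidestep this non-finiteness.
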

\begin{proof}
   We may assume that $k$ is algebraically closed. By \autoref{prop:RCC}, we have $H^1(X, \cO_X)\sim_F 0$, or equivalently,
   $H^1_{\et}(X, \bF_p) = 0$. 
   We show $H^2_{\et}(X, \bF_p) = 0$. 
   For all $i>0$, we have $H^i(X, W\cO_{X, \bQ}) = 0$ 
   by \cite{Esnault}*{Theorem 1.1}, and thus $H^i_{\et}(X, \bQ_p) = 0$ by \cite{PZ21}*{Corollary 3.18}.
   Therefore, there exists $N > 0$ such that $H^i_{\et}(X, \bZ_p)=\varprojlim H^i_{\et}(X, \bZ/p^j\bZ)$ is $p^N$-torsion for all $i>0$. 
   Here, we recall that $H^i_{\et}(X, \bZ_p)\coloneqq R^i(\varprojlim\circ\Gamma_{\et})(\bZ/p^j\bZ)=\varprojlim H^i_{\et}(X, \bZ/p^j\bZ)$ holds because $H^{i-1}_{\et}(X, \bZ/p^j\bZ)$ is finite, and thus the inverse system $\{H^{i-1}_{\et}(X, \bZ/p^j\bZ)\}_j$ satisfies the Mittag-Leffler condition for all $i\geq 0$ (see \cite{Milne(etalecohomology)}*{Corollary VI.2.8} and \cite{Jannsen}*{(3.1)}).
   We also have $H^l_{\et}(X, \bZ/p^{j}\bZ) = 0$ for $l \in \{1, 3\}$ and all $j>0$, since $H^l_{\et}(X, \bF_p) = 0$ for $l \in \{1, 3\}$.  

   For all $j >0$, we have the following short exact sequence:
   \[ 0 \to \bZ/p^j\bZ \xrightarrow{p^N} \bZ/p^{j + N}\bZ \to \bZ/p^N\bZ \to 0\] 
    of locally constant sheaves in the \'etale topology.
   Taking the long exact sequence, we have
   \begin{multline*}
       H^1_{\et}(X, \bZ/p^N\bZ)=0 \to H^2_{\et}(X, \bZ/p^j\bZ) \xrightarrow{p^N} H^2_{\et}(X, \bZ/p^{j + N}\bZ) \to H^2_{\et}(X, \bZ/p^N\bZ)\\ \to H^3_{\et}(X, \bZ/p^j\bZ)=0.
   \end{multline*} 
   Since $H^2_{\et}(X, \bZ/p^j\bZ)$ is Artinian for all $j>0$ \cite{Milne(etalecohomology)}*{Corollary VI.2.8}, the inverse system $\{H^2_{\et}(X, \bZ/p^j\bZ)\}_j$ satisfies the Mittag-Leffler condition.
   Thus, taking inverse limits, we have an exact sequence 
   \[ 0 \to H^2_{\et}(X, \bZ_p) \xrightarrow{p^N} H^2_{\et}(X, \bZ_p) \to H^2_{\et}(X, \bZ/p^N\bZ) \to 0.\] 
   Since $p^N$ acts injectively on $H^2_{\et}(X, \bZ_p)$ and annihilates it, we have $H^2_{\et}(X, \bZ_p)=0$. 
   Thus, we obtain $H^2_{\et}(X, \bZ/p^N\bZ) = 0$ by the above exact sequence again. Since $H^3_{\et}(X, \bZ/p^j\bZ) = 0$ for all $j>0$ as proven above, the  short exact sequence of \'etale sheaves
   \[ 0 \to \bZ/p^{N - 1}\bZ \to \bZ/p^N\bZ \to \bF_p \to 0\] 
   shows that $H^2_{\et}(X, \bF_p) = 0$. 
   By \autoref{thm:RH}, we conclude that  $H^2(X, \mathcal{O}_X) \sim_F 0$.
\end{proof}

\begin{lemma} \label{lem: RCC_log_res_klt_type}
    Assume that $k$ is a perfect field, and let $X$ be a rationally chain connected proper 3-fold over $k$ of klt type. 
    Then every log resolution of $X$ is rationally chain connected.
\end{lemma}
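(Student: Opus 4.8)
The plan is to deduce the lemma from two ingredients, combined by a short chain‑chasing argument. The ingredients are: \textbf{(A)} every fibre of $\pi$ is rationally chain connected; and \textbf{(B)} if $g\colon V'\to V$ is a proper birational morphism of varieties with $V$ rationally chain connected and every fibre of $g$ rationally chain connected, then $V'$ is rationally chain connected. Granting these, the lemma follows at once by applying \textbf{(A)} and \textbf{(B)} to $\pi\colon Y\to X$: indeed $X$ is rationally chain connected by hypothesis, so is each fibre of $\pi$ by \textbf{(A)}, so $Y$ is rationally chain connected by \textbf{(B)}.

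I would prove \textbf{(B)} by hand. Pick general points $y_1,y_2\in V'$; then $x_i\coloneqq g(y_i)$ are general points of $V$, in particular they lie off $g(\Ex(g))$, so $g$ is an isomorphism over a neighbourhood of each $x_i$. Since $V$ is proper and rationally chain connected, $x_1$ and $x_2$ are joined by a chain of rational curves $C_1,\dots,C_n\subseteq V$ with $x_1\in C_1$, $x_2\in C_n$ and $C_j\cap C_{j+1}\neq\emptyset$; by a standard moving argument — the rational curves through a general point of $V$ are not contained in the fixed proper closed subset $g(\Ex(g))$ — we may assume that no $C_j$ lies in $g(\Ex(g))$, so each strict transform $\widetilde C_j\subseteq V'$ is again a rational curve with $\widetilde C_j\to C_j$ surjective. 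Because $g$ is an isomorphism near $x_1$ (resp. near $x_2$) we get $y_1\in\widetilde C_1$ (resp. $y_2\in\widetilde C_n$). Choosing $p_j\in C_j\cap C_{j+1}$, the curves $\widetilde C_j$ and $\widetilde C_{j+1}$ each meet the fibre $g^{-1}(p_j)$, which is rationally chain connected by hypothesis, so the two intersection points are joined inside $g^{-1}(p_j)\subseteq V'$ by a chain of rational curves. Concatenating all of these, $y_1$ and $y_2$ are joined in $V'$ by a chain of rational curves, so $V'$ is rationally chain connected.

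The real content, and the main obstacle, is \textbf{(A)}: the fibres of the log resolution $\pi\colon Y\to X$ of the klt‑type proper threefold $X$ are rationally chain connected. This is a low‑dimensional, positive‑characteristic instance of the Hacon--McKernan theorem that the fibres of a log resolution of a klt‑type variety are rationally chain connected, and I would argue it directly, using that the fibres here are at most surfaces. Fix $x\in X$; we may assume $x\in\pi(\Ex(\pi))$ and set $F\coloneqq\pi^{-1}(x)$ with its reduced structure. By Zariski's main theorem ($X$ is normal) $F$ is connected; if $F$ is a single point we are done, so assume $\dim F\geq1$, whence $F\subseteq\Ex(\pi)$, which is an snc divisor $\sum E_i$. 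Thus $F$ is the union of the closed sets $E_i\cap F$ over those $E_i$ with $x\in\pi(E_i)$, and since $F$ is connected it suffices to show each $E_i\cap F$ is rationally chain connected: the chains then propagate through the intersection points. Fix $\Delta\geq0$ with $(X,\Delta)$ klt. If $\pi(E_i)=\{x\}$, then $E_i\cap F=E_i$ is a smooth projective surface contracted to a point, with discrepancy $>-1$; extracting $E_i$ by a plt blow‑up of $X$ and applying adjunction, $(E_i,\mathrm{Diff})$ is of Fano type, so $E_i$ is a rational surface, in particular rationally chain connected. If $\dim\pi(E_i)=1$, one reduces to the two‑dimensional klt case (for instance by a general surface section of $X$ through $x$, or by localising along the curve $\pi(E_i)$), so that the one‑dimensional pieces of $F$ coming from $E_i$ sit inside a tree of $\mathbb{P}^1$'s — the exceptional fibre of a resolution of a klt surface singularity over a perfect field — which is rationally chain connected. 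The hardest point here is the rationality of the exceptional surfaces $E_i$ contracted to a point: this genuinely uses the birational geometry of klt threefold singularities (plt blow‑ups, hence the threefold minimal model program), in contrast with the purely formal argument for \textbf{(B)}; alternatively one may invoke a positive‑characteristic analogue of the Hacon--McKernan result, or deduce \textbf{(A)} from the threefold minimal model program results used later in this section. Either way, combining \textbf{(A)} and \textbf{(B)} finishes the proof.
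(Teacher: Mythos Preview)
Your decomposition into \textbf{(A)} and \textbf{(B)} is natural, and \textbf{(B)} is a standard lemma --- though your ``moving argument'' does not work as stated: the first curve $C_1$ passes through a general point of $V$, but later curves in the chain need only meet the previous one and may well lie entirely inside $g(\Ex(g))$. The statement is nonetheless true and well known; you should simply cite it.

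The substantive gap is in \textbf{(A)}. You cannot ``extract $E_i$ by a plt blow-up of $X$'' for an arbitrary exceptional divisor: a plt blow-up produces a Koll\'ar component, a specific valuation determined by the MMP, and an exceptional divisor of positive discrepancy over a klt pair need not be extractable as the unique exceptional divisor of any projective birational model (think of $X$ smooth). Even when extraction is possible, the extracted divisor is only birational to $E_i \subset Y$, so passing from ``Fano type'' on the extraction to ``rational'' for the original smooth $E_i$ still requires a further argument. Your treatment of the case $\dim\pi(E_i)=1$ is also too sketchy: a general surface section of $X$ through $x$ need not be klt, and nothing forces the fibres of $E_i\to\pi(E_i)$ to be rational before any MMP is run.

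The paper instead follows \cite{GNT19}*{Theorem 4.8}: rather than analysing the fibres of $\pi$ directly, one runs the $(K_Y+\pi_*^{-1}\Delta+E)$-MMP over $X$, which terminates at $X$ because $(X,\Delta)$ is klt. This factors $\pi$ (up to flips) into elementary contractions, and for each of these the positive-dimensional fibres are swept out by the extremal rational curves provided by the cone theorem, hence are rationally chain connected. Applying \textbf{(B)} at each step then gives the result. This is exactly the MMP input you gesture at in your last sentence, but it bypasses any analysis of the individual $E_i$'s on $Y$; your direct attack on \textbf{(A)} is essentially trying to reassemble this argument by hand, and the pieces do not fit without the MMP factorisation.
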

\begin{proof}
This is proven by the same argument of \cite{GNT19}*{Theorem 4.8} using the birational 
MMP for 3-folds in every characteristic (cf.~\cite{HW22}*{Theorem 1.1} and \cite{k-notQfact}*{Theorem 9}).
\end{proof}

\begin{theorem}\label{thm:threefold}
    Assume that $k$ is a perfect field,  
    and let $X$ be a rationally chain connected proper 3-fold over $k$ of klt type with $H^3(X, \cO_X)\sim_F 0$. 
    Then $H^i(X,\cO_X)\sim_F 0$ for all $i>0$.
\end{theorem}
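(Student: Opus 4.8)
The plan is to reduce the statement to the smooth case via a resolution of singularities and then invoke \autoref{prop:Esnault}. First I would note that we may assume $X$ to be geometrically connected: since $X$ is rationally chain connected, $X_{\overline{k}}$ is connected, so $k$ is algebraically closed in $k(X)$; replacing $k$ by this subfield of $k(X)$ (which is again perfect) changes neither $H^i(X,\cO_X)$ nor its natural $F$-module structure, and hence is harmless.

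Next I would pick a projective log resolution $\pi\colon Y\to X$, which exists by \cite{CP19}. By \autoref{lem: RCC_log_res_klt_type}, $Y$ is rationally chain connected, and since moreover $Y$ is smooth over $k$ while $X$ is geometrically connected, $Y$ is a geometrically connected smooth proper rationally chain connected variety over $k$; thus \autoref{prop:RCC} and \autoref{prop:Esnault} apply to $Y$. On the other hand, by \autoref{prop: 3-fold_sing} the klt-type $3$-fold $X$ has $\bF_p$-rational singularities, and since resolutions of singularities exist in dimension $3$, every resolution of $X$ is $\bF_p$-rational (as explained in \autoref{rem: enough to check vanishing of the exceptional}, using that smooth varieties are $\bF_p$-rational, cf.\ \cite{CR11}); in particular the natural map $\cO_X\to\mathbf{R}\pi_*\cO_Y$ is an isomorphism in $D^b(\Crys_X^F)$.

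Then I would transport the vanishing between $X$ and $Y$. Since $X$ is proper over $k$, the derived global sections functor $\RGamma(X,-)$ descends to a functor $D^b(\Crys_X^F)\to D^b(\Crys_{\Spec k}^F)$, and by \autoref{thm:RH} it corresponds under the Riemann--Hilbert correspondence to the derived global sections of the associated constructible $\bF_p$-sheaf, so it carries $\sim_F$-isomorphisms to $\sim_F$-isomorphisms. Applying $\RGamma(X,-)$ to the isomorphism $\cO_X\xrightarrow{\sim}\mathbf{R}\pi_*\cO_Y$ and using $\RGamma(X,\mathbf{R}\pi_*\cO_Y)=\RGamma(Y,\cO_Y)$, I obtain an isomorphism $\RGamma(X,\cO_X)\cong\RGamma(Y,\cO_Y)$ in $D^b(\Crys_{\Spec k}^F)$. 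Taking cohomology gives $H^i(X,\cO_X)\sim_F H^i(Y,\cO_Y)$ for all $i$; in particular the two sides are $\sim_F 0$ simultaneously.

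Finally, from the hypothesis $H^3(X,\cO_X)\sim_F 0$ I conclude $H^3(Y,\cO_Y)\sim_F 0$, so \autoref{prop:Esnault} applied to $Y$ yields $H^i(Y,\cO_Y)\sim_F 0$ for $i\in\{1,2\}$. Together with $H^i(Y,\cO_Y)=0$ for $i>\dim Y=3$ (Grothendieck vanishing), this shows $H^i(Y,\cO_Y)\sim_F 0$ for all $i>0$, hence $H^i(X,\cO_X)\sim_F 0$ for all $i>0$, as desired. I expect the only delicate point to be the third step: one must make sure that $\bF_p$-rationality of $\pi$ genuinely upgrades the plain birational comparison of coherent cohomology to an isomorphism of $F$-crystals over $\Spec k$, so that the condition ``$\sim_F 0$'' can be transported in both directions --- this is exactly what the crystal/Riemann--Hilbert formalism of \autoref{thm:RH} (together with the compatibility of $\RGamma$ with proper pushforward) is designed to supply. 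Everything else is a direct assembly of results already established in the excerpt.
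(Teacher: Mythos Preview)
Your proof is correct and follows essentially the same route as the paper: take a log resolution $Y\to X$, use \autoref{lem: RCC_log_res_klt_type} to see that $Y$ is rationally chain connected, use \autoref{prop: 3-fold_sing} to identify $H^i(X,\cO_X)\sim_F H^i(Y,\cO_Y)$, and then apply \autoref{prop:Esnault} to $Y$. The only cosmetic differences are that the paper phrases the comparison via the Leray spectral sequence (using $R^i\pi_*\cO_Y\sim_F 0$ directly) rather than via the derived isomorphism $\cO_X\simeq\mathbf{R}\pi_*\cO_Y$ and $\RGamma$, and it does not spell out the geometric-connectedness reduction; neither difference is substantive.
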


\begin{proof}
    Let $f\colon Y\to X$ be a log resolution of $X$. 
    Then $Y$ is rationally chain connected by \autoref{lem: RCC_log_res_klt_type}.
    Since $R^if_{*}\cO_Y\sim_F 0$ for $i>0$ by \autoref{prop: 3-fold_sing}, we deduce from the Leray spectral sequence 
    that $H^i(X,\cO_X)\sim_F H^i(Y,\cO_Y)$ for all $i>0$.
    In particular, we have $H^3(Y,\cO_Y)\sim_F 0$ by assumption.
    Now, \autoref{prop:Esnault} shows that $H^i(X,\cO_X)\sim_FH^i(Y,\cO_Y)\sim_F 0$ for all $i>0$.
\end{proof}

\begin{proposition}\label{thm:klt Fano}
    Assume that $k$ is perfect, and let $X$ be a proper 3-fold of klt type.
    Suppose that one of the following holds.
    \begin{enumerate}
        \item\label{klt Fano (a)} $\rho(X)=1$, $X$ is $\mathbb{Q}$-factorial, $-K_X$ is ample and $k$ is uncountable.
        \item\label{klt Fano (b)} $p>3$ and $X$ is of Fano type.
    \end{enumerate}
    Then $H^i(X,\cO_X)\sim_F 0$ for all $i>0$.
\end{proposition}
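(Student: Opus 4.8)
The plan is to deduce both statements from \autoref{thm:threefold}. Note that $X$ is of klt type in either case (by hypothesis in case \autoref{klt Fano (a)}, and because Fano type varieties are of klt type in case \autoref{klt Fano (b)}), and $X$ is projective: it carries an ample divisor, namely $-K_X$ in case \autoref{klt Fano (a)} (which is $\bQ$-Cartier by $\bQ$-factoriality), while the structure morphism is projective by the definition of Fano type in case \autoref{klt Fano (b)}. Hence it suffices to check that $X$ is rationally chain connected and that $H^3(X,\cO_X)\sim_F 0$; I in fact expect to prove $H^3(X,\cO_X)=0$.

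First I would establish that $-(K_X+\Delta)$ is big, where $\Delta\geq 0$ is a boundary with $(X,\Delta)$ klt witnessing the Fano type condition in case \autoref{klt Fano (b)}, and $\Delta=0$ in case \autoref{klt Fano (a)} (where $-K_X$ is ample). By Kodaira's lemma $-(K_X+\Delta)\equiv A+E$ for an ample $\mathbb{R}$-divisor $A$ and an effective $\mathbb{R}$-divisor $E$, so $\bigl(-(K_X+\Delta)\bigr)\cdot H^2>0$ for any ample divisor $H$ on $X$. Now a nonzero global section of $\omega_X\cong\cO_X(K_X)$ would produce an effective Weil divisor $D\sim K_X$; then $D+\Delta$ is effective and $\bQ$-Cartier, since it differs from the $\bQ$-Cartier divisor $K_X+\Delta$ by a principal divisor (in case \autoref{klt Fano (a)} one uses that $X$ is $\bQ$-factorial). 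Thus on one hand $(D+\Delta)\cdot H^2\geq 0$, while on the other $D+\Delta\equiv K_X+\Delta\equiv -(A+E)$ forces $(D+\Delta)\cdot H^2<0$ --- a contradiction. Hence $H^0(X,\omega_X)=0$, and Grothendieck--Serre duality gives $H^3(X,\cO_X)^{\vee}\cong\Ext^{-3}(\cO_X,\omega_X^{\bullet})=H^0(X,\omega_X)=0$, using that $\omega_X^{\bullet}$ has no cohomology in degrees below $-3$ and $\mathcal{H}^{-3}(\omega_X^{\bullet})=\omega_X$. So $H^3(X,\cO_X)=0$.

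Next I would verify that $X$ is rationally chain connected. In case \autoref{klt Fano (b)}, $X$ is a $3$-fold of Fano type over a perfect field of characteristic $p>3$, and I would invoke the rational chain connectedness of such varieties from \cite{GNT19}; this is the sole place where the hypothesis $p>3$ is used, since it relies on the absolute $3$-fold MMP (the argument parallels the one behind \autoref{lem: RCC_log_res_klt_type}). In case \autoref{klt Fano (a)}, I would first observe that, $X$ being of klt type, removing the boundary preserves kltness, so $X$ has klt singularities; thus $X$ is a $\bQ$-factorial klt Fano $3$-fold of Picard number one, which is rationally chain connected in every characteristic by bend-and-break. Granting these two inputs, \autoref{thm:threefold} immediately gives $H^i(X,\cO_X)\sim_F 0$ for all $i>0$.

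The step I expect to be the main obstacle is the rational chain connectedness of $X$ in case \autoref{klt Fano (b)}: this is precisely where the $3$-fold MMP, and hence the restriction $p>3$, genuinely enters. By contrast, the duality computation of $H^3$ and the input \autoref{thm:threefold} itself are insensitive to the characteristic, which is exactly why case \autoref{klt Fano (a)} holds for all $p>0$.
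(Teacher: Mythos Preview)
Your proposal is correct and follows essentially the same approach as the paper: verify $H^3(X,\cO_X)=0$ via Serre duality and bigness of the anticanonical, verify rational chain connectedness (citing \cite{GLP$^+$15} for case \autoref{klt Fano (a)} and the argument of \cite{GNT19}*{Proposition 4.11} together with the MMP of \cite{HW-MMp-5} for case \autoref{klt Fano (b)}), and then apply \autoref{thm:threefold}. The only cosmetic difference is that the paper observes directly that $-K_X=-(K_X+\Delta)+\Delta$ is big (big plus effective), so $H^0(X,\omega_X)=0$ is immediate without your intersection-theoretic contradiction.
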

\begin{proof}
    Since $-K_X$ is big, we have $H^3(X, \sO_X)\cong H^0(X, \sO_X(K_X))=0$ in both cases.
    Thus, it suffices to verity that $X$ is rationally chain connected by \autoref{thm:threefold}.
    In the case of \autoref{klt Fano (a)}, this follows from \cite{GLP$^+$15}*{Proposition 3.4 (2)}.
    In the case of \autoref{klt Fano (b)}, since $p>3$, we can argue as in the proof of \cite{GNT19}*{Proposition 4.11} to conclude that $X$ is rationally chain connected by the MMP in characteristic 5 \cite{HW-MMp-5}.
\end{proof}

We conclude with an observation on the Frobenius action on the cohomology of a rationally chain connected surface.

\begin{proposition}
    Assume that $k$ is perfect, and let $X$ be a normal projective rationally chain connected surface with $W\mathcal{O}$-rational singularities. 
    Then $H^i(X,\cO_X)\sim_F 0$ for all $i>0$.
\end{proposition}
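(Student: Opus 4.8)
The plan is to reduce to a resolution and exploit that rationally chain connected surfaces are essentially rational. First, one may assume $k=\overline{k}$: the kernel and cokernel of $\cO_X\to\cO_{X_{\overline{k}}}$ are locally nilpotent $F$-modules, so $H^i(X,\cO_X)\sim_F 0$ if and only if $H^i(X_{\overline{k}},\cO_{X_{\overline{k}}})\sim_F 0$, and $X_{\overline{k}}$ is again a normal projective rationally chain connected surface with $W\cO$-rational singularities. Since $\dim X=2$ one has $H^i(X,\cO_X)=0$ for $i\geq 3$, so only $i\in\{1,2\}$ remain. Fixing a resolution $f\colon Y\to X$, one has $R^jf_*\cO_Y=0$ for $j\geq 2$ while $R^1f_*\cO_Y$ is supported at the finitely many singular points of $X$, so $H^{\geq 1}(X,R^1f_*\cO_Y)=0$; as the Frobenius $\cO_Y\to F_*\cO_Y$ turns the Leray spectral sequence of $f$ into one of $F$-modules, this gives an exact sequence of $F$-modules
\[ 0\to H^1(X,\cO_X)\to H^1(Y,\cO_Y)\to H^0(X,R^1f_*\cO_Y)\to H^2(X,\cO_X)\to H^2(Y,\cO_Y)\to 0. \]
Hence it suffices to show that $Y$ is a rational surface and that $R^1f_*\cO_Y\sim_F 0$.

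To see that $Y$ is rational, note that $X$ being rationally chain connected forces $Y$ to be uniruled: a general point of $Y$ lies on the strict transform of a rational curve through its (general, hence smooth) image. By the Enriques--Kodaira classification of surfaces, valid in all characteristics, $\kappa(Y)=-\infty$, so $Y$ is birational to a $\mathbb{P}^1$-bundle over a smooth projective curve $C$; in particular $H^2(Y,\cO_Y)=0$ and $H^1(Y,\cO_Y)\cong H^1(C,\cO_C)$. To conclude $C\cong\mathbb{P}^1$, I would first observe that no component of the exceptional locus of $f$ has positive genus: by the theorem on formal functions for Witt vector cohomology (\cite{Berthelot_Bloch_Esnault_On_Witt_vector_cohomology_for_singular_varieties}*{Theorem 2.4}) a positive-genus exceptional curve would contribute a non-zero class to $R^1f_*W\cO_{Y,\mathbb{Q}}$, contradicting $W\cO$-rationality. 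Then the Albanese morphism $\alpha\colon Y\to\Alb(Y)\cong\Alb(C)$ contracts every fibre of $f$ (a connected union of rational curves), so by rigidity (using $f_*\cO_Y=\cO_X$) it descends to $\overline{\alpha}\colon X\to\Alb(Y)$; since $X$ is rationally chain connected, $\overline{\alpha}$ is constant, and since the image of the Albanese morphism generates $\Alb(Y)$ this forces $\Alb(Y)=0$. Thus $C\cong\mathbb{P}^1$ and $Y$ is rational, so $H^i(Y,\cO_Y)=0$ for all $i>0$.

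For the vanishing $R^1f_*\cO_Y\sim_F 0$, I would use that on a surface resolution $R^2f_*W\cO_Y=0$ (each $R^2f_*W_n\cO_Y$ vanishes, and the transition maps on $R^1f_*W_n\cO_Y$ are surjective since $R^2f_*\cO_Y=0$), so the surjection of $F$-sheaves $W\cO_Y\twoheadrightarrow\cO_Y$ realizes $R^1f_*\cO_Y$ as an $F$-module quotient of $R^1f_*W\cO_Y$. Since $R^1f_*W\cO_{Y,\mathbb{Q}}=0$ by $W\cO$-rationality (which in particular makes $X$ $\mathbb{Q}_p$-rational, cf.~\autoref{prop: rat-up-nil-implies-Wrat}), the comparison between $\cO$- and $W\cO$-cohomology of \cite{CR12} then forces $R^1f_*\cO_Y\sim_F 0$; equivalently, $X$ has $\mathbb{F}_p$-rational singularities. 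Feeding the two conclusions into the displayed exact sequence yields $H^1(X,\cO_X)=0$ and $H^2(X,\cO_X)\sim_F 0$.

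The hard part is the last step. One must be careful that $W\cO$-rationality a priori only controls $R^1f_*W\cO_{Y,\mathbb{Q}}$, and that in general an $F$-module $R^1f_*\cO_Y$ may be non-zero with non-nilpotent Frobenius even when $R^1f_*W\cO_{Y,\mathbb{Q}}=0$ — this is precisely what happens for the cone over an ordinary Enriques surface in dimension three, where the obstruction is carried by the non-zero torsion module $R^2f_*W\cO_Y$. The point here is that, $X$ being a surface, this obstruction term vanishes, which is exactly what makes the comparison go through. If one prefers to avoid invoking \cite{CR12} at this point, one can instead argue directly that $W\cO$-rational surface singularities are rational, so that $R^1f_*\cO_Y=0$ and no appeal to Frobenius nilpotence is needed.
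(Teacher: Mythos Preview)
Your overall architecture matches the paper's: pass to a resolution $f\colon Y\to X$, show $R^if_*\cO_Y\sim_F 0$ and $H^i(Y,\cO_Y)\sim_F 0$, then conclude by Leray. Your argument that $Y$ is genuinely rational (uniruledness, ruling out positive-genus exceptional components via $W\cO$-rationality, then the Albanese descent) is correct and in fact stronger than what the paper proves; the paper instead just observes that $Y$ is rationally chain connected and invokes \autoref{prop:Esnault} to get $H^i(Y,\cO_Y)\sim_F 0$.

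The gap is in your Step~6, the vanishing $R^1f_*\cO_Y\sim_F 0$. Your argument~(a) does not go through: the surjection $R^1f_*W\cO_Y\twoheadrightarrow R^1f_*\cO_Y$ together with $R^1f_*W\cO_{Y,\mathbb{Q}}=0$ only tells you that $R^1f_*W\cO_Y$ is $p$-power torsion, and a $p$-torsion $W$-module with Frobenius can perfectly well have non-nilpotent Frobenius on its reduction mod $V$ --- there is no ``comparison'' in \cite{CR12} that bridges this. Your argument~(b), that $W\cO$-rational surface singularities are outright rational, is asserted without proof; note that there exist non-rational normal surface singularities whose minimal-resolution exceptional divisor is a tree of smooth $\mathbb{P}^1$'s (e.g.\ a star with central $(-3)$-curve and five $(-2)$-arms has $p_a(Z)=1$), so ``each $E_i$ is rational'', which is all you extracted, is not enough by itself.

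What the paper does instead is short and avoids both detours: by \cite{NT20}*{Proposition 2.23}, $W\cO$-rationality forces the exceptional locus $E$ to be a \emph{tree} of smooth rational curves (you only used that the components are rational, not the tree structure). Then $H^1(E,\cO_E)=0$, and by the proper base change for $F$-crystals (\autoref{rem: enough to check vanishing of the exceptional}) one gets $R^1f_*\cO_Y\sim_F R^1f_*\cO_E=0$ directly. The tree hypothesis is essential here: a cycle of $\mathbb{P}^1$'s has $H^1=k$ with bijective Frobenius.
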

\begin{proof}
    Let $f \colon Y \to X$ be a log resolution of singularities. 
    Since $X$ has only $W\mathcal{O}$-rational singularities, the exceptional locus $E$ is a tree of smooth rational curves by \cite{NT20}*{Proposition 2.23} (this is not true for $\F_p$-rational singularities, because the affine cone of a supersingular elliptic curve is $\F_p$-rational).
    This implies that $Y$ is rationally chain connected and that $R^if_*\cO_Y \sim_F 0$ for $i>0$.
    By \autoref{prop:Esnault}, we have $H^i(Y, \sO_Y) \sim_F 0$ for all $i>0$.
    Therefore, we conclude from the Leray spectral sequence that $H^i(X, \sO_X) \sim_F 0$ for all $i>0$.
\end{proof}

\subsection{The relative case}

In this section, we prove the vanishing for 3--dimensional morphisms of Fano type onto a positive-dimensional basis. 
To show this, we use the extraction of Koll\'{a}r components as in \cite{GNT19}*{Proposition 2.15} and the proper base change theorem for Frobenius crystals.

\begin{lemma}\label{lem: Fano-type}
    Let $(X, \Delta)$ be a klt $3$-fold pair over a perfect field $k$ of characteristic $p > 3$. Let $f \colon X \to Z$ be a Fano type contraction such that $\dim(Z) \geq 1$.
    Then $R^if_*\cO_X \sim_F 0$ for all $i>0$. 
\end{lemma}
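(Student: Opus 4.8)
The plan is to reduce this relative vanishing to two facts already available: the vanishing up to Frobenius nilpotence for Fano type morphisms of surfaces (\autoref{prop: nilp_vanishing_dP}) and the $\mathbb{F}_p$-rationality of klt $3$-folds (\autoref{prop: 3-fold_sing}). First, by \autoref{proper_base_change_F_crystals} together with \autoref{lem: vanishing fibers} it suffices to prove that $H^i(X_z, \mathcal{O}_{X_z}) \sim_F 0$ for every $i > 0$ and every $z \in Z$ (the residue fields $k(z)$ are $F$-finite, being finitely generated over the perfect field $k$). Since the perfection $\mathcal{O}_{X_z}^{1/p^{\infty}}$ depends only on $(X_z)_{\mathrm{red}}$, one may replace $X_z$ by its reduction, so that $\dim(X_z) \leq 2$ once the case $\dim Z = 3$ has been disposed of. That case is immediate: then $f$ is birational, $Z$ is of klt type by the negativity lemma, hence $\mathbb{F}_p$-rational by \autoref{prop: 3-fold_sing}, and choosing a common resolution $W$ of $X$ and $Z$ gives $\mathbf{R}f_*\mathcal{O}_X \sim_F \mathbf{R}(W \to Z)_*\mathcal{O}_W \sim_F \mathcal{O}_Z$ by the $\mathbb{F}_p$-rationality of both. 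So assume $\dim Z \in \{1, 2\}$.

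By \autoref{prop: nilp_vanishing_dP} applied to $X_z \to \Spec k(z)$, together with Serre duality (which gives $H^2(S, \mathcal{O}_S) = H^0(S, \omega_S) = 0$ for a Fano surface $S$), the target vanishing $H^i(X_z, \mathcal{O}_{X_z}) \sim_F 0$ holds as soon as $(X_z)_{\mathrm{red}}$ is a variety of Fano type of dimension $\leq 2$; this is automatic over a dense open of $Z$, but for special $z$ the fibre may be non-reduced, reducible, non-normal, or of jumping dimension. Repairing the fibre is where the main work lies: following the extraction of a Koll\'ar component as in \cite{GNT19}*{Proposition 2.15} — whose proof uses the birational MMP for $3$-folds, which is why the hypothesis $p > 3$ enters — one produces a projective birational morphism $\mu \colon X' \to X$, an isomorphism away from $f^{-1}(z)$, with $X'$ again a klt $3$-fold, and a contraction $f' = f \circ \mu \colon X' \to Z$ whose reduced fibre $X'_z \coloneqq (f')^{-1}(z)$ over $z$ is, up to a universal homeomorphism, a variety of Fano type of dimension $3 - \dim Z$. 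Since $X$ and $X'$ are $\mathbb{F}_p$-rational (\autoref{prop: 3-fold_sing}), one has $\mathbf{R}\mu_*\mathcal{O}_{X'} \sim_F \mathcal{O}_X$, and applying \autoref{proper_base_change_F_crystals} to $\mu$ and restricting over $z$ yields $H^i(X_z, \mathcal{O}_{X_z}) \sim_F H^i(X'_z, \mathcal{O}_{X'_z})$. Now \autoref{universal_homeomorphism_induces_isomorphism_of_crystals} replaces $X'_z$ by the underlying Fano type variety, and \autoref{prop: nilp_vanishing_dP} (trivial in dimension $\leq 1$) finishes the argument.

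I expect the Koll\'ar-component step to be the principal obstacle: one must produce, uniformly in the ``bad'' point $z$, a birational model on which the fibre becomes a genuine log Fano variety of the expected dimension, while keeping $\mathbb{F}_p$-rationality under control along the modification. The two features that make this work are that $\sim_F$ only detects the perfection — so nilpotents and purely inseparable or universal-homeomorphism phenomena are invisible — and that \autoref{prop: nilp_vanishing_dP} already covers Fano type morphisms of surfaces over arbitrary (possibly imperfect) $F$-finite fields. An alternative route, closer to the remark preceding the statement, is to run a $(K_X+\Delta)$-MMP over $Z$, propagating the vanishing through divisorial contractions and flips via \autoref{lem: rationality_contraction} and \autoref{lem: rationality_flips} exactly as in the proof of \autoref{thm: F_p-rationality-Fano}, reducing to a Mori fibre space $X' \to Y \to Z$; then $Y \to Z$ is handled by \autoref{prop: nilp_vanishing_dP} (as $\dim Y \leq 2$ and $Y$ is of Fano type over $Z$) and the Mori fibration $X' \to Y$ by the same Koll\'ar-component analysis of its fibres, where the same bad-fibre difficulty reappears in identical form.
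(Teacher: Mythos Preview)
Your approach is correct and matches the paper's: reduce to fibres via \autoref{proper_base_change_F_crystals} and \autoref{lem: vanishing fibers}, extract a Koll\'ar component over a fixed closed point $z$ via \cite{GNT19}*{Proposition 2.15}, transfer the question from $X$ to the new model through a common smooth resolution using the $\mathbb{F}_p$-rationality of klt $3$-folds (\autoref{prop: 3-fold_sing}), and finish with \autoref{prop: nilp_vanishing_dP} on the normalised component (normal up to universal homeomorphism by \cite{HW22}*{Theorem 1.2}). Two minor corrections to your description: the Koll\'ar component is always a prime divisor on the model, hence a surface regardless of $\dim Z$ (not of dimension $3-\dim Z$), and \cite{GNT19}*{Proposition 2.15} produces a roof $X \leftarrow W \to Y$ with $W$ smooth rather than a morphism $X' \to X$, which is why the paper routes the comparison through $W$ --- exactly the common-resolution manoeuvre you already used in your $\dim Z = 3$ case.
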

\begin{proof}
By \autoref{lem: vanishing fibers}, it is enough to show that $R^if_*\cO_X \otimes k(z) \sim_F 0$ for all closed points $z \in Z$ and $i>0$.

    Let $z \in Z$ be a closed point. By \cite{GNT19}*{Proposition 2.15} (although it only deals with the case $p > 5$, the case $p = 5$ can be proven with the MMP from \cite{HW-MMp-5}), we have a commutative diagram
    \[
    \begin{tikzcd}
        W \arrow[r, "\varphi"] \arrow[d, "\psi"'] & Y \arrow[d, "g"] \\
        X \arrow[r, "f"']                      & Z 
    \end{tikzcd} 
    \] 
    and an effective $\bQ$-divisor $\Delta_Y$ on $Y$ such that 
    \begin{itemize}
        \item $W$ is smooth,
        \item $\varphi, \psi$ are projective birational morphisms;
        \item $(Y, \Delta_Y)$ is plt and $\bQ$-factorial;
        \item $-(K_Y + \Delta_Y)$ is $g$-ample;
        \item $(Y_z)_{\red} = \lfloor \Delta_Y \rfloor$.
    \end{itemize} 
    By \autoref{prop: 3-fold_sing}, we have \[ R^if_*\cO_X \sim_F R^i(f \circ \psi)_*\cO_W \cong R^i(g \circ \varphi)_*\cO_W \sim_F R^ig_*\cO_Y, \] so that by the proper base change theorem (\autoref{proper_base_change_F_crystals}) we conclude \[ R^if_*\cO_X \otimes k(z) \sim_F  H^i(Y_z, \cO_{Y_z}). \]
    
    Consider the integral surface $S$ corresponding to the prime divisor $\lfloor \Delta_Y \rfloor$. 
    By \cite{HW22}*{Theorem 1.2}, $S$ is normal up to a universal homeomorphism, so in particular the composition $S^{\nu} \to S= (Y_z)_{\red}\to  Y_z$ is a universal homeomorphism. Thus, $H^i(Y_z, \cO_{Y_z}) \sim_F H^i(S^{\nu}, \cO_{S^{\nu}})$ by \autoref{universal_homeomorphism_induces_isomorphism_of_crystals}.  
    By adjunction, $S^{\nu}$ is a surface of del Pezzo type, so we conclude from \autoref{prop: nilp_vanishing_dP}. 
\end{proof}

\begin{remark}
    The same proof also works over $F$-finite fields of characteristic $p>5$, using the results in \cites{DW22, Wal23}.
\end{remark}

\begin{theorem}\label{thm: vanishing_3fold_Fanotype}
      Let $k$ be a perfect field of characteristic $p>3$, and let $f \colon X \to Z$ be a Fano type morphism over $k$ such that $\dim(X)=3$.
      Then $R^if_*\cO_X \sim_F 0$ for all $i>0$.
\end{theorem}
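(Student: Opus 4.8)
The plan is to reduce the statement to two results already established: the vanishing for Fano type \emph{contractions} onto a positive-dimensional base (\autoref{lem: Fano-type}), and the absolute vanishing for proper Fano type $3$-folds over a field (\autoref{thm:klt Fano}.\autoref{klt Fano (b)}). The bridge between a general Fano type morphism and a Fano type contraction is the Stein factorization.

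First I would make two harmless reductions. Replacing $Z$ by $\overline{f(X)}$ with its reduced structure and denoting by $\iota$ the corresponding closed immersion, we have $R^if_*\cO_X = \iota_*R^i\overline{f}_*\cO_X$ for the induced morphism $\overline{f}\colon X \to \overline{f(X)}$; since $\iota_*$ is exact and compatible with the $F$-module structures (Frobenius commutes with $\iota$), it both preserves and reflects the relation $\sim_F 0$, and $\overline{f}$ is again of Fano type. So we may assume $f$ is surjective. Now let $X \xrightarrow{f'} Z' \xrightarrow{g} Z$ be the Stein factorization of $f$: then $g$ is finite, $Z'$ is a normal variety over $k$ (normality of $Z'$ follows from that of $X$), and $f'_*\cO_X = \cO_{Z'}$, so $f'$ is a contraction. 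One checks that $f'$ is again of Fano type for the same $\Delta$: a curve in $X$ is $f'$-contracted if and only if it is $f$-contracted (as $g$ is finite), hence $-(K_X+\Delta)$ is $f'$-nef; and writing $D \coloneqq -(K_X+\Delta)$ one has $f_*\cO_X(mD) = g_*f'_*\cO_X(mD)$, so the ranks over $k(Z)$ and over $k(Z')$ differ by the constant factor $[k(Z'):k(Z)]$, and since $\dim Z = \dim Z'$ this shows $f'$-bigness is equivalent to $f$-bigness. Thus $f'$ is a Fano type contraction.

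Next, since $g$ is finite, hence affine, the Leray spectral sequence degenerates to an isomorphism $R^if_*\cO_X \cong g_*R^if'_*\cO_X$ for every $i$, which is an isomorphism of $F$-modules by functoriality (using $F_Z\circ g = g\circ F_{Z'}$). Moreover $g_*$ preserves the relation $\sim_F 0$: if $\tau^n = 0$ on an $F$-module $\cM$ on $Z'$, then the structure map of $g_*\cM$ is $g_*\tau$ and $(g_*\tau)^n = g_*(\tau^n) = 0$. Hence it suffices to prove $R^if'_*\cO_X \sim_F 0$ for $i>0$. If $\dim Z' \geq 1$, this is precisely \autoref{lem: Fano-type} applied to the Fano type contraction $f'$, noting that $(X,\Delta)$ is a klt pair of dimension $3$, that $k$ is perfect, and that $p>3$. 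If $\dim Z' = 0$, then $Z' = \Spec k'$ for a finite field extension $k'/k$, so $X$ is a proper $3$-fold of Fano type over $k$, and \autoref{thm:klt Fano}.\autoref{klt Fano (b)} gives $H^i(X,\cO_X)\sim_F 0$ for $i>0$, hence $R^if'_*\cO_X \sim_F 0$. In either case $R^if_*\cO_X \cong g_*R^if'_*\cO_X \sim_F 0$ for $i>0$, as required.

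All the serious content—extraction of Koll\'ar components and reduction to del Pezzo surfaces inside \autoref{lem: Fano-type}, and rational chain connectedness plus $p$-adic cohomology behind \autoref{thm:klt Fano}—is already in place. The only step I expect to require genuine care is verifying that the Stein factorization of a Fano type morphism is a Fano type contraction, i.e.\ that relative bigness and nefness of $-(K_X+\Delta)$ descend to $f'$; the remaining arguments are routine bookkeeping with finite and closed-immersion pushforwards on $F$-crystals.
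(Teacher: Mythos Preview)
Your proof is correct and follows the same case split as the paper, which simply says: if $\dim(Z)=0$ apply \autoref{thm:klt Fano}, if $\dim(Z)>0$ apply \autoref{lem: Fano-type}. Your additional Stein factorization step is a genuine refinement rather than a detour: \autoref{lem: Fano-type} is stated for a Fano type \emph{contraction}, while the theorem is stated for a Fano type \emph{morphism}, and the paper's two-line proof does not address this discrepancy, so your reduction via Stein factorization (together with the check that $f'$-nefness and $f'$-bigness of $-(K_X+\Delta)$ are inherited) fills a small gap the paper leaves implicit.
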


\begin{proof}
    If $\dim(Z)=0$ (resp.~$>0$), this is proven in \autoref{thm:klt Fano} (resp.~\autoref{lem: Fano-type}).
\end{proof}

We can now combine all previous results to show $\mathbb{F}_p$-rationality of klt 4-folds admitting a log resolution.

\begin{corollary} \label{cor: 4folds_klt}
    Let $X$ be a 4-fold of klt type over a perfect field $k$ of characteristic $p>5$.
    Assume that a log resolution exists for every birational model of $X$.
    Then $X$ has $\mathbb{F}_p$-rational singularities. 
\end{corollary}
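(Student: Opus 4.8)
The plan is to invoke \autoref{thm: F_p-rationality-Fano} with $n=4$; accordingly, all that needs to be done is to check its three input hypotheses for our $4$-fold $X$. The first one — existence of a log resolution for every birational model of $X$ — is exactly our standing assumption, and, taking such resolutions to be obtained by successive blow-ups along the non-snc locus, it yields \autoref{hyp} for $X$. The second one — existence of the birational MMP of the shape described in \autoref{thm: kollar-notqfactorial-4folds} in dimension $4$ — is then supplied by \autoref{thm: kollar-notqfactorial-4folds} itself, together with \autoref{cor: 4-fold-klt-base} for the fact that the relevant $(K_Y+\pi_*^{-1}\Delta+\Ex(\pi))$-MMP for a klt $4$-fold terminates with $X$; here one uses that $k$ is perfect of characteristic $p>5$.

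For the third hypothesis I would verify \autoref{conjecture}.\autoref{conj: vanishing_nilpotent} and \autoref{conjecture}.\autoref{conj: F_p-rationality} in dimension $\leq 3$. The vanishing up to Frobenius nilpotence for a Fano type morphism $f\colon X\to Z$ with $\dim(X)=3$ is \autoref{thm: vanishing_3fold_Fanotype}, applicable since $k$ is perfect and $p>5>3$; the case $\dim(X)=2$ is \autoref{prop: nilp_vanishing_dP}, and dimensions $0$ and $1$ are trivial. The $\mathbb{F}_p$-rationality of klt type singularities of dimension $3$ is \autoref{prop: 3-fold_sing}, while in dimensions $\leq 2$ it is elementary (a normal curve is regular, and klt surface singularities are rational, hence a fortiori $\mathbb{F}_p$-rational). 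Once these three hypotheses are in place, \autoref{thm: F_p-rationality-Fano} immediately gives that $X$ has $\mathbb{F}_p$-rational singularities.

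I do not expect a genuine obstacle here: every ingredient has already been established earlier in the article, so the work is purely one of assembling the pieces. The only point worth flagging is that the bound $p>5$ and the perfectness of $k$ are forced by the inputs rather than by this argument — they are needed for the non-$\mathbb{Q}$-factorial $4$-fold MMP of \autoref{thm: kollar-notqfactorial-4folds}, whereas the $3$-dimensional Fano type vanishing of \autoref{thm: vanishing_3fold_Fanotype} only requires $p>3$ — so it would be the place to look if one hoped to relax hypotheses in the future.
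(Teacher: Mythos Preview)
Your proposal is correct and follows exactly the same approach as the paper: verify the hypotheses of \autoref{thm: F_p-rationality-Fano} for $n=4$ using the birational MMP of \autoref{thm: kollar-notqfactorial-4folds} and the Fano type vanishing of \autoref{thm: vanishing_3fold_Fanotype}. The paper's proof is just a terser version of what you wrote, omitting the explicit checks in dimensions $\leq 2$ and the remark about where the bound $p>5$ is actually used.
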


\begin{proof}
    As the birational MMP is established in 
    \autoref{thm: kollar-notqfactorial-4folds}, and the vanishing up to Frobenius nilpotence for 3-folds of Fano type in \autoref{thm: vanishing_3fold_Fanotype}, we can conclude by \autoref{thm: F_p-rationality-Fano}.
\end{proof}

\section{Kawamata--Viehweg vanishing on $K$-trivial 3-folds}

We show that a Frobenius--stable version of the Kawamata--Viehweg theorem holds on klt Calabi--Yau pairs of dimension 3. We will need the following standard fact about triangulated categories:

\begin{lemma}\label{derived_cat_lemma}
    Let $\mathcal{T}$ be a triangulated category (e.g. a derived category), and consider a diagram 
    \[ \begin{tikzcd}
        & \mathcal{C}^{\bullet} \arrow[d, "a"] &                               &    \\
        \mathcal{F}^{\bullet} \arrow[r, "f"] & \mathcal{G}^{\bullet} \arrow[r, "g"] & \mathcal{H}^{\bullet} \arrow[r, "+1"] & {}
    \end{tikzcd} \] in $\mathcal{T}$, where the bottom row is an exact triangle. If $g \circ a = 0$, then $a$ factors through $\cF^{\bullet}$. 
\end{lemma}
\begin{proof}
    It follows from \cite[10.2.8]{Wei94}.
\end{proof}

We also need the following generalisation of $p$-power freeness proven by Tanaka \cite{tan-p-power}.

\begin{proposition} \label{prop: p-power-freeness}
    Let $k$ be an $F$-finite field.
    Let $f \colon X \to Z$ be a proper birational morphism of quasi-projective normal 3-folds over $k$.
    Assume that $Z$ is klt type. 
    If $L$ is Cartier divisor such that $L \equiv_{f} 0$, there exists $e>0$ such that $p^e L \sim f^*L_Z$, where $L_Z$ is Cartier.
\end{proposition}

\begin{proof}
    The same proof of \cite{tan-p-power}*{Theorem 4.3} works with our hypothesis.
    Indeed, the MMP needed is proved in \cite{7authors}*{Theorem 9.15} and the analogue of \cite{tan-p-power}*{Lemma 4.2} holds by \cite{Tan18}*{Theorem 4.1} and \cite{BT22}*{Theorem 1.3}. 
\end{proof}

\begin{theorem}\label{thm: kvv-up-frob}
    Let $k$ be an $F$-finite field field of characteristic $p>0$.
    Let $(X, \Delta)$ be a projective klt 3-fold pair over $k$ with $K_X+\Delta \sim_{\bQ} 0$.
    If $D$ is a big and semi-ample Cartier divisor on $X$, then for $e \gg 0$ the morphism \[F^e \colon H^i(X, \mathcal{O}_X(D)) \to H^i(X, F^e_{*}\mathcal{O}_X(p^eD))\] vanishes for every $i>0$.
\end{theorem}

\begin{remark}
    Note that $D \sim_{\bQ} (K_X + \Delta) + D$, so this theorem is really a Kawamata--Viehweg vanishing--type result up to an iterated Frobenius pullback.
\end{remark}

\begin{proof}
    Since $D$ is semiample and big, there is a birational contraction $f \colon X \to Z$ such that $D \equiv_f 0$. As $f_*(K_X+\Delta)=K_Z+f_*\Delta \sim_{\mathbb{Q}} 0$, we deduce that $K_X+\Delta-f^*(K_Z+f_*\Delta) \sim_{\mathbb{Q}} 0$, meaning that $(X, \Delta)$ is crepant to $(Z, f_*\Delta)$ and thus $Z$ has singularities of klt type. 
    By \autoref{prop: p-power-freeness} we then obtain that $\mathcal{O}_X(p^eD)$ is globally generated for $e \gg 0$ and $p^eD \sim f^*A$ for some very ample Cartier divisor $A$ on $Z$. Since we may replace $D$ by $p^eD$, we may assume that $D \sim f^*A$.
     Moreover, as $Z$ has klt type singularities, we have $R^if_{*}\mathcal{O}_X \sim_{F} 0$ for $i>0$ by \autoref{prop: 3-fold_sing}. 

    \begin{claim}
        There exists $e > 0$ such that the Frobenius action $Rf_*\cO_X \to F^e_*Rf_*\cO_X$ factors as \[ Rf_*\cO_X \to F^e_*f_*\cO_X \to F^e_*Rf_*\cO_X \] in $D^b(\Coh_Z)$.
    \end{claim}
    \noindent\emph{Proof of the claim.}
    Let $m \geq 0$ be the biggest integer such that $R^mf_*\cO_Y \neq 0$. If $m = 0$, then there is nothing to do, so assume that $m > 0$. For each $i \in \{1, \dots, m\}$, let $e_i > 0$ be such that the Frobenius action $R^if_*\cO_X \to F^{e_i}_*R^if_*\cO_X$ is zero. We have a commutative diagram of exact triangles in $D^b(\Coh_Z)$:

    \[ \begin{tikzcd}
        \tau_{\leq m - 1}Rf_*\cO_X \arrow[r] \arrow[d]  & Rf_*\cO_X \arrow[d, "a"] \arrow[r] & R^mf_*\cO_X \arrow[d]     \arrow[r, "+1"]           &   {} \\
        F^{e_m}_*(\tau_{\leq m - 1}Rf_*\cO_X) \arrow[r] & F^{e_m}_*Rf_*\cO_X \arrow[r]       & F^{e_m}_*R^mf_*\cO_X \arrow[r, "+1"] & {}
    \end{tikzcd} \] so by \autoref{derived_cat_lemma}, the map $Rf_*\cO_X \to F^{e_m}_*Rf_*\cO_X$ factors through $F^{e_m}_*(\tau_{\leq m - 1}Rf_*\cO_X)$. We can then keep going (replacing $Rf_*\cO_X$ by $\tau_{\leq m - 1}Rf_*\cO_X$ and so on) to conclude that for $e = e_m + \dots + e_1$, the map $Rf_*\cO_X \to F^e_*Rf_*\cO_X$ factors through $F^e_*f_*\cO_X$. Hence, we have proven the claim. \\ 

We can now twist by $\cO_Z(A)$ to obtain, by the projection formula, a factorisation \[ Rf_*\cO_X(D) \to F^e_*\cO_Z(p^eA) \to F^e_*Rf_*\cO_X(p^eD). \] Applying $H^i(Z, -)$ gives us the factorisation \[ H^i(X, \cO_X(D)) \to H^i(Z, F^e_*\cO_Z(p^eA)) \to H^i(X, F^e_*\cO_X(p^eD)), \] 
    so for $e \gg 0$, this composition is zero by Serre vanishing.
\end{proof}

\begin{corollary} 
    Let $k$ be an $F$-finite field of characteristic $p>0$ and let $(X, \Delta)$ be a projective klt 3-fold pair over $k$ with $K_X+\Delta \sim_{\bQ} 0$. Assume further that $X$ is globally $F$-split. Then for any big and semi-ample Cartier divisor $D$ on $X$ and $i > 0$, we have \[H^i(X, \mathcal{O}_X(D)) = 0. \]
\end{corollary}

\begin{proof}
    Since $X$ is globally $F$-split, $F^e \colon H^i(X, \mathcal{O}_X(D))\to H^i(X, F^e_*\mathcal{O}_X(p^eD))$ splits, so we conclude by \autoref{thm: kvv-up-frob}.
\end{proof}

\begin{remark}\label{rem: vanishing_CY_fourfolds}
    The exact same proof shows that if $(X, \Delta)$ is a projective klt Calabi--Yau pair of dimension 4 satisfying \autoref{hyp} over a perfect field $k$ of characteristic $p > 5$ and $D$ is a big and semiample Cartier divisor on $X$, then there exists $j > 0$ coprime to $p$ such that for all $e \gg 0$, the morphism \[ F^e \colon H^i(X, \cO_X(jD)) \to  H^i(X, F^e_*\cO_X(p^ejD)) \] vanishes for any $i > 0$. We do not know if we can take $j = 1$ because we do not have an analogue of \cite[Theorem 4.3]{tan-p-power} in dimension 4 as we do not know if a numerically trivial Cartier divisor on a klt Fano 3-fold is $p^{e}$-torsion for some $e>0$.
    
    In any case, we deduce that if $X$ is 
    furthermore globally $F$--split, then $H^i(X, \cO_X(jD)) = 0$ for all $i > 0$.
\end{remark}

\bibliographystyle{amsalpha}
\bibliography{refJAG}

\end{document}